\definecolor{red-}{rgb}{1.0,0.0,0.0}
\definecolor{green-}{rgb}{0.0,0.7,0.0}
\definecolor{brown-}{rgb}{0.9,0.6,0.0}
\newtheorem{defi}{Definition}[section]
\newtheorem{thm}[defi]{Theorem}
\newtheorem{cor}[defi]{Corollary}
\newtheorem{prop}[defi]{Proposition}
\newtheorem{lem}[defi]{Lemma}
\newtheorem{rem}[defi]{Remark}
\begin{document}

\title{Generalized polarized manifolds with low second class}

\author{Antonio Lanteri and Andrea Luigi Tironi}

\date{\today}

\address{Dipartimento di Matematica ``F. Enriques'',
Universit\`a degli Studi di Milano, Via C. Saldini, 50,  I-20133 Milano,
Italy} \email{antonio.lanteri@unimi.it}
\address{
Departamento de Matem\'atica, Universidad de Concepci\'on, Casilla
160-C, Concepci\'on, Chile} \email{atironi@udec.cl}

\subjclass[2010]{Primary: 14F05; Secondary: 14J60, 14J25; 14N30 Key words and
phrases: Ample vector bundle; special variety; surface}

\begin{abstract}
On a smooth complex projective variety $X$ of dimension $n$, consider an ample
vector bundle $\mathcal{E}$ of rank $r \leq n-2$ and an ample line bundle $H$. A numerical character
$m_2=m_2(X,\mathcal{E},H)$ of the triplet $(X,\mathcal{E},H)$ is defined, extending the well-known second class
of a polarized manifold $(X,H)$, when either $n=2$ or $H$ is very ample.
Under some additional assumptions on $\mathcal{F}: = \mathcal{E} \oplus H^{\oplus (n-r-2)}$,
triplets $(X,\mathcal{E},H)$ as above whose $m_2$ is small with respect to the invariants
$d:=c_{n-2}(\mathcal{F})H^2$ and $g:=1+\frac{1}{2}\big(K_X + c_1(\mathcal{F})+H\big) \cdot c_{n-2}(\mathcal{F}) \cdot H$ are
studied and classified.
\end{abstract}

\maketitle

\section*{Introduction}\label{Intro}

Let $S$ be a smooth complex projective surface embedded by a very
ample line bundle $L$. Identify $S$ with its image in
$\mathbb{P}^N$, $N=\dim H^0(S,L)-1$, via the embedding associated
with $L$ and think of the linear system $|L|$ corresponding to the
elements of $H^0(S,L)$ as the hyperplane linear system of $S$.
Consider also the dual variety $\mathcal{D}(S)$ of $S$, i.e. the
subset of $|L|$ parameterizing the tangent hyperplanes.
If $(S,L)\neq (\mathbb{P}^2,\mathcal{O}_{\mathbb{P}^2}(1))$, then $\mathcal{D}(S)$
is a hypersurface in the dual projective space $\mathbb{P}^{N \vee}$
(identified with $|L|$), and its degree $m$ is usually called the {\em class} of $S$.
More generally, for a projective manifold $X \subset \mathbb P^N$
one can consider its second class $m_2$, namely the class of its general surface section,
which is always positive, unless $X$ is a linear space, by what we said.
Like for the degree and the sectional genus, the study of $m_2$ contributed to a large
literature on the classification of smooth projective varieties with small invariants.
In particular, it is known that for $m \leq 29$, $S$ is a ruled
surface and pairs $(S,L)$
occurring for $m \leq 25$ are classified (see
\cite[p.195]{MG-K}, and \cite[Prop. 3.2]{TuV}).
Moreover, for $m\leq 11$ only
$(\mathbb{P}^2,\mathcal{O}_{\mathbb{P}^2}(e)), e=1,2$, and
scrolls may occur (e.g. see Remark \ref{veryample}).

Due to the fact that $m=c_2(J_1(L))$, the second Chern class of the first jet bundle of $L$,
in recent years the study of small values of $m$
for embedded surfaces has been reconsidered and transplanted in the wider setting of ample line bundles.
In particular, Palleschi and Turrini (\cite{PTu}) started to classify polarized surfaces $(S,H)$
when $H$ is only assumed to be ample on $S$ by studying small
values of $c_2(J_1(H))$ and of $c_2(J_1(H))-H^2$, in line with
classical papers by Marchionna \cite{M} and Gallarati
\cite{G1}, \cite{G2}. For pairs $(S,H)$ as above the situation
is different from the classical case because already for
$c_2(J_1(H))=5$ a non ruled surface occurs. Sometimes, in this context,
$m:=c_2(J_1(H))$ is referred to as the {\em generalized class} of the polarized surface $(S,H)$.

The aim of this paper is to revisit the study of this character in the framework of ample vector bundles.
We generalize $m_2$ from a projective manifold $X$ polarized by a very ample line
bundle $L$ to triplets $(X,\mathcal{E},H)$ in an appropriate vector bundle setting, and we study the
objects giving rise to small values of this character.
Roughly speaking, on a smooth complex projective variety $X$
of dimension $n$, consider an ample vector
bundle $\mathcal{E}$ of rank $r$, $2 \leq r \leq n-2$, and an ample line bundle
$H$. By considering the triplet
$(X,\mathcal{E},H)$
and the ample vector bundle of rank $n-2$ on $X$ given by
$\mathcal F: = \mathcal E \oplus H^{\oplus (n-r-2)}$, we
define the generalized class
$m_{2}=m_{2}(X,\mathcal{E},H)$ of $(X,\mathcal{E},H)$ as
\begin{equation}\label{*}
m_{2}:=\left[c_2(\Omega_X\oplus
\det \mathcal F)+c_1^2-c_2+H^2\right]\cdot
c_{n-2}+4(g-1), \tag{$*$}
\end{equation}
where $c_i:=c_i(\mathcal{F})$ for $i=1,2, \dots ,n-2$, and
$g:=1+\frac{1}{2}(K_X+c_1+H)\cdot H\cdot c_{n-2}$.
From now on we simply write
$m_{2}$ for $m_2(X,\mathcal E,H)$.

\smallskip
If $\mathcal{F}$ admits a section vanishing on a smooth surface $S$, it turns out that $m_2=c_2(J_1(H_S))$, the generalized class
of the polarized surface $(S,H_S)$. Moreover, for $H$ very ample
and $\mathcal{E}=H^{\oplus (n-2)}$, $m_2$ is just the second class of the projective manifold
$X$ embedded in $\mathbb{P}^{N}$ via $|H|$.

This allows us to revisit and extend several classification results for surfaces of
small class in the setting of ample vector bundles.
Actually, under the above assumption on $\mathcal{F}$,
we show that $m_{2}\geq d$, where
$d:=c_{n-2}(\mathcal F) \cdot H^2$, except for
$(X,\mathcal E,H)=(\mathbb P^n, \mathcal O_{\mathbb P^n}(1)^{\oplus r},
\mathcal O_{\mathbb P^n}(1))$, or $(\mathbb P^n, \mathcal O_{\mathbb P^n}(1)^{\oplus (n-2)},
\mathcal O_{\mathbb P^n}(2))$, and we describe completely the
triplets satisfying equality (see Theorem \ref{basic1}).

Then by putting $\delta:=m_{2}-d$,
in line with the classical case, we study small positive values of
$\delta$ by proving that $\delta\geq 6$, apart
from few triplets $(X,\mathcal E,H)$, which are precisely described (Theorem \ref{basic2}).
As a consequence of these results, we describe the possible
triplets $(X,\mathcal E,H)$ with $m_2\leq 6$. Moreover, we
carry on our analysis to prove that
if $m_{2} > 6$, then $m_{2} \geq 10$, provided that $S$ has
non-negative Kodaira dimension.
Including the sectional genus $g$ into the picture, we characterize triplets
for which $\delta \leq 2g+2$ (Proposition \ref{2q} and Theorem \ref{2g+1, 2g+2}) and we show that
$\delta \geq 2g+d$ if $S$ has non-negative Kodaira dimension.
Moreover, as expected, the stronger are the properties enjoyed by the line bundle $H_S$ (existence of a smooth
curve in $|H_S|$, spannedness by global sections, very ampleness), the larger
are the values of $m_2$ attained by our results. In particular, assuming that $H_S$ is
spanned by global sections, we list the triplets with $m_2 \leq 11$, those with $\delta \leq 2g+2$,
as well as those with $\delta \leq 2g+5$ provided that $S$ has non-negative Kodaira dimension
(Proposition \ref{basic5} and Theorem \ref{main}). In connection with this,
we have the opportunity to correct a mistake in \cite{LP2} (see Remark \ref{remark 4.6} ii)).
On the other hand, under the assumption that $H_S$ is very ample,
we revisit the above results and finally we prove that $\delta \geq 2g+11$ if $S$ has non-negative Kodaira dimension.

A great help in our analysis is provided by a number of results
on ample vector bundles having a section which vanishes on a surface
of some special kind (\cite{dF}, \cite{L2}, \cite{LM1}, \cite{LM2}, \cite{LM3}, \cite{LM4}). The strategy is the following: first, looking at the difference
$\delta$, which can be expressed in terms of geometric and topological characters,
we show, extending or refining some known results, that the polarized surface $(S,H_S)$ must belong to a
precise list of pairs. Next, by applying the results on ample vector bundles
mentioned before we succeed to reduce (sometimes drastically) these lists to a very short
number of cases, for which we obtain a rather complete description of $\mathcal{E}$ and $H$ according to
the admissible structure of $X$. For example, in some instances $S$ could ``a priori''
be a minimal elliptic surface, whose elliptic fibration turns out to be endowed with some
multiple fibers, but this possibility is ruled out by \cite{LM4}. Therefore these cases
do not lift to the vector bundle setting.

The paper is organized as follows. Section \ref{Sec0} contains
miscellaneous preliminary results on polarized surfaces $(S,L)$
with special regard to pairs for which $c_2(J_1(L))-L^2$ is small.
The starting point is the list in \cite[Theorem 4.3]{PTu},
combined with results of Fujita
\cite{F3} and Yokoyama \cite{Y}, which is summarized in Table \ref{Table1}.
In particular, in this setting, we prove new results holding either when $|L|$ contains a smooth
curve or when $S$ has non-negative Kodaira dimension, which will play a
relevant role in the sequel. In Section \ref{Sec1} the invariant
$m_2$ is introduced for triplets $(X,\mathcal{E},H)$ in an
appropriate setting and triplets for which $\delta$ is small are
analyzed. Moreover, lists of triplets with low $m_2$ are derived
from this study. In Section \ref{Sec1bis} significant bounds for
$\delta$ expressed in terms of the sectional genus $g$ are
discussed. Finally, in Section \ref{Sec2} all the above matter is
reconsidered under the extra assumption that the line bundle $H_S$
is ample and spanned (Subsection 4.1) or even very ample
(Subsection 4.2).

\smallskip

We work over the field of complex numbers and we use the standard notation and terminology from algebraic geometry.
In particular,

\smallskip

\begin{tabular}{llcl}
$\mathbb P^n$  & : &  &  the projective space of dimension $n$; \\
$\mathbb Q^n$  & : &  &  the smooth quadric hypersurface of $\mathbb
P^{n+1}$; \\
$\Omega_V$  & : &  &  the cotangent bundle of a smooth variety $V$;
\\
$q(V)$  & : &  &  the irregularity $h^1(\mathcal{O}_V)$ of
$V$;\\
$K_V$ & : & & the canonical bundle of $V$; \\
$\mathcal F_W$  & : &  &  the pull-back of a coherent sheaf
$\mathcal F$ on $V$ via an embedding $W\subset V$;
\end{tabular}

\begin{tabular}{llcl}
$(s)_0$  & : &  &  the (scheme-theoretic) zero locus of a section
$s$ of a vector bundle on $V$;\\
%
$e(S)$  & : &  &  the topological Euler characteristic of a
surface $S$; \\
$\kappa(S)$  & : &  &  the Kodaira dimension of $S$;
\\
$g(S,\mathcal L)$  & : &  &  the sectional genus of a
polarized surface $(S,\mathcal L)$; \\
$\equiv$ & : & & the numerical equivalence relation.
\end{tabular}

\medskip

\noindent With a little abuse, we adopt the additive notation for the tensor product of line bundles.
We say that a smooth surface $S$ is ruled if it is birationally
ruled, i.e. if $\kappa(S) = - \infty$; $S$ is said to be
geometrically ruled if it is a $\mathbb{P}^1$-bundle over a smooth
curve. To denote a geometrically ruled surface of invariant $e$ over a
smooth curve of genus $q:=q(S)$ we use the non-standard symbol $S_{q,e}$ (in particular,
$S_{0,e}$ is the Segre--Hirzebruch surface of invariant $e$); however, as usual
(e.g., see \cite[Chapter V, \S\ 2]{H})
$C_0$ and $f$ will stand for a section of minimal
self-intersection $-e$ and a fiber, respectively. We recall that
$e\geq -q$ (Nagata inequality).

\vskip0.5cm

\section{Polarized surfaces $(S,L)$ with small class}\label{Sec0}

Here are some general facts concerning polarized surfaces $(S,L)$
which will be useful in the basic setting introduced in Section \ref{Sec1}.

\medskip

For the convenience of the reader, we sum up in Table \ref{Table1}
known results concerning polarized surfaces $(S,L)$
whose class $m:=c_2(J_1(L))$ is small compared to
the degree $d:=L^2$. We set $q:=q(S)$, $g=g(S,L)$, and we denote by $(S',L')$ a
minimalization of $(S,L)$, when $S$ is not minimal, as in \cite{PTu}. Recall that
letting $\eta:S \to S'$ be the corresponding birational morphism, we have $L = \eta^* L'- \sum \nu_iE_i$,
where $E_i$, $i=1, \dots, s$, are the exceptional curves contracted by $\eta$ and $\nu_i\geq 1$ for every $i$.

{
{\tiny
\begin{table}[!h]
\begin{center}
  \begin{tabular}{ | c | c | c | c | c | c | c | c | c | }
    \hline
 N.              &  $m-d$          & $d$          & $g$         & $q$       & $\kappa(S)$       & $e(S)$     & $(S,L)$                                                                              & $(S',L')$ \\ \hline \hline
 1               &  -1             & 1, 4         & 0           & 0         & $-\infty$         & 3          & $(\mathbb{P}^2,\mathcal{O}_{\mathbb{P}^2}(e))$, $e=1,2$                              & -- \\ \hline
 2               &  0              & $\geq 1$     & $q$         & $\geq 0$  & $-\infty$         & $4-4q$     & scroll over a smooth curve of genus $q$                                              & -- \\ \hline
 3               &  3              & 9            & 1           & 0         & $-\infty$         & 3          & $(\mathbb{P}^2,\mathcal{O}_{\mathbb{P}^2}(3))$                                       & -- \\ \hline
 4               &  4              & 8            & 1           & 0         & $-\infty$         & 4          & $(\mathbb{P}^1\times\mathbb{P}^1,\mathcal{O}_{\mathbb{P}^1\times\mathbb{P}^1}(2,2))$ & -- \\ \hline
 5               & 4               & 8            & 1           & 0         & $-\infty$         & 4          & the blow-up at a point of                                                            & N. 3 \\ \hline
 6               & 4               & 3            & 2           & 1         & $-\infty$         & 0          & $(S_{1,-1},[3C_0-f])$                                                                & -- \\ \hline
 7               & 4               & 4            & 2           & 1         & $-\infty$         & 0          & $(S_{1,e},[2C_0+(e+1)f])$, $e=-1,0$                                                  & -- \\ \hline
 8               & 4               & 1            & 2           & 1         & 1                 & 0          & $S\to\mathbb{P}^1$ is a minimal elliptic surface                                     & -- \\
                 &                 &              &             &           &                   &            & with multiple fibers                                                                 &  \\ \hline
 9               & 4               & 2            & 2           & 2         & 0                 & 0          & $S$ is the Jacobian of a smooth curve $C$ of                                         & -- \\
                 &                 &              &             &           &                   &            & genus $2$, $L\equiv C$ embedded in $S$ and $h^0(L)=1$                                &  \\ \hline
 10              & 4               & 2            & 2           & 2         & 0                 & 0          & $S\cong C_1\times C_2$, $C_i$ is an elliptic curve                                   & -- \\
                 &                 &              &             &           &                   &            & for $i=1,2$, $L\equiv C_1+C_2$ and $h^0(L)=1$                                        &  \\ \hline
 11              & 4               & 2            & 2           & 2         & 0                 & 0          & $S$ is a bielliptic surface, $|L|=\{Z+F\},\ Z$ a section,                            & -- \\
                 &                 &              &             &           &                   &            & $F$ a fiber of the Albanese fibration                                                &  \\ \hline
 12              & 5               & 7            & 1           & 0         & $-\infty$         & 5          & the blow-up at two points of                                                         &  N. 3 \\ \hline
 13              & 5               & 1            & 2           & 2         & 0                 & 1          & the blow-up at a point of                                                            & N. 9 \\ \hline
 14              &  5              & 2            & 2           & 1         & $-\infty$         & 1          & the blow-up at a point of                                                            & N. 6 \\ \hline
 15              &  5              & 3            & 2           & 1         & $-\infty$         & 1          & the blow-up at a point of                                                            & N. 7 \\ \hline
 16              & 6               & 6            & 1           & 0         & $-\infty$         & 6          & the blow-up at three points of                                                       & N. 3 \\ \hline
 17              & 6               & 1            & 2           & 1         & $-\infty$         & 2          & the blow-up at two points of                                                         & N. 6 \\ \hline
 18              & 6               & 2            & 2           & 1         & $-\infty$         & 2          & the blow-up at two points of                                                         & N. 7 \\ \hline
 19              &  7              & 5            & 1           & 0         & $-\infty$         & 7          & the blow-up at four points of                                                        & N. 3 \\ \hline
 20              & 7               & 1            & 2           & 1         & $-\infty$         & 3          & the blow-up at three points of                                                       & N. 7 \\ \hline
 21              &  8              & 4            & 1           & 0         & $-\infty$         & 8          & the blow-up at five points of                                                        & N. 3 \\ \hline
 22              & 8               & 4            & 3           & 2         & 0                 & 0          & $S$ is an abelian surface                                                            & -- \\ \hline
 23              & 8               & 4            & 3           & 1         & 0                 & 0          & $S$ is a bielliptic surface                                                          & -- \\ \hline
 24              &  8              & $\leq$ 3     & 3           & $\geq 1$  & 1                 & 0          & $S$ is a minimal elliptic surface with $\chi (\mathcal{O}_S)=0$                      & -- \\ \hline
 25              & 8               & 12           & 2           & 0         & $-\infty$         & 4          & $(S_{0,e},[2C_0+(3+e)f]),$ with $e=0,1,2$                                            & -- \\ \hline
 26              & 8               & 8            & 3           & 1         & $-\infty$         & 0          & $(S_{1,e},[2C_0+(2+e)f]),$ with $e=-1,0,1$                                           & -- \\ \hline
 27              &  8              & 6            & 3           & 1         & $-\infty$         & 0          & $(S_{1,0},[3C_0+f])$                                                                 & -- \\ \hline
 28              &  8              & 5            & 3           & 1         & $-\infty$         & 0          & $(S_{1,-1},[5C_0-2f])$                                                               & -- \\ \hline
 29              & 8               & 4            & 4           & 2         & $-\infty$         & -4         & $(S_{2,e}, [2C_0+(e+1)f])$ with $-2\leq e\leq 0$                                     & -- \\ \hline
\end{tabular}

\medskip

\caption{Polarized surfaces $(S,L)$ with $m-d\leq 8$.}
\label{Table1}
\end{center}
\end{table}}

\smallskip

In particular, note that all pairs $(S,L)$ with $m\leq 9$ are included in Table \ref{Table1}.
The basic source for Table 1 is \cite[Section 4]{PTu}, taking into account
some progress in the classification of polarized surfaces of sectional genus two, compared to \cite{BLP}.
Moreover, as to N. 11, we note that
the description of $L$ provided in \cite[Theorem 2.7, d)]{BLP} has been improved by Fujita (see \cite[Theorem 15.7]{F4} and
\cite[Lemma 2.15]{F2}); $|L|$ consists of a single divisor, which is the sum of a section and a fiber of the Albanese fibration.
As a consequence, \cite[Remark 2.3 (2)]{Y} implies that no simple blow-up of a pair as in N. 11 can occur.
Furthermore, the results concerning ruled surfaces
over an elliptic curve, due to Fujita \cite[\S\ 4]{F3} (see also \cite[Theorem 15.2, cases 0), and 3)--5)]{F4}) and Yokoyama
\cite[Theorem 4.1 (ii)]{Y}, lead to a
simplification in \cite[Theorem 4.3]{PTu}. For instance, combining both we see that for $g=2$ and $S'=S_{1,e}$,
it must be $\nu_i=1$ for every $i$, hence $e(S) = s = {L'}^2 - d$.

\smallskip

\begin{rem}\label{veryample}
{\em If $L$ is very ample, the only surviving cases in Table \ref{Table1}
are N. 1--5, 12, 16, 19, 21, 25, and 26
with $e=-1$.}
\end{rem}

\medskip

It is useful to recall that for a polarized surface $(S,L)$ we
have $m = e(S)+2K_SL + 3 L^2$, hence
\begin{equation}\label{formula}
m-d = e(S) + 2K_SL + 2d = e(S) + 4(g-1) \ . \tag{$\#$}
\end{equation}

\begin{lem} \label{lemma}
Let $S$ be a smooth surface, $L$ an ample line bundle on $S$, and
let $g:=g(S,L)$ be the sectional genus of $(S,L)$. Suppose that
$\sigma:S \to S_0$ is the blow-up of a smooth surface $S_0$ at a
single point and let $E$ be the exceptional curve. Then there
exists an ample line bundle $L_0$ on $S_0$ such that $L=\sigma^*
L_0 - rE$, where $r = LE \geq 1$. Moreover, $L^2 = L_0^2 - r^2$,
$L K_S = L_0 K_{S_0} + r$. In particular,
\begin{enumerate}
\item[i)] $g = g(S_0,L_0) - \binom{r}{2}$ $($hence $g=g(S_0,L_0)$ if
and only if $r=1)$; \item[ii)] If $L K_S = 1$, then $S$
cannot have Kodaira dimension $\kappa(S) \geq 1$.
\end{enumerate}
\end{lem}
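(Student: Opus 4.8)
The plan is to exploit the standard structure theory of the blow-up of a smooth surface at a point. First I would recall that $\sigma^{*}\colon\operatorname{Pic}(S_0)\to\operatorname{Pic}(S)$ is injective with $\operatorname{Pic}(S)=\sigma^{*}\operatorname{Pic}(S_0)\oplus\mathbb Z E$, where $E^2=-1$ and $\sigma^{*}D\cdot E=0$ for every divisor $D$ on $S_0$. Writing $L=\sigma^{*}L_0+aE$ and intersecting with $E$ gives $LE=aE^2=-a$, so $a=-r$ with $r:=LE$; ampleness of $L$ forces $LE>0$, hence $r\ge 1$. This yields the asserted decomposition $L=\sigma^{*}L_0-rE$.

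To see that $L_0$ is ample I would invoke the Nakai--Moishezon criterion on $S_0$. From $\sigma^{*}L_0=L+rE$ one computes $L_0^2=(\sigma^{*}L_0)^2=(L+rE)^2=L^2+r^2>0$, which simultaneously gives the relation $L^2=L_0^2-r^2$. For an irreducible curve $C\subset S_0$ with strict transform $\tilde C$ and multiplicity $m=\operatorname{mult}_p C\ge 0$ one has $\sigma^{*}C=\tilde C+mE$, and the projection formula gives $L_0\cdot C=\sigma^{*}L_0\cdot\tilde C=(L+rE)\cdot\tilde C=L\tilde C+rm>0$, since $L$ is ample and $rm\ge 0$. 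Hence $L_0$ is ample. The last intersection identity follows by expanding $LK_S$ via $K_S=\sigma^{*}K_{S_0}+E$: the mixed terms vanish and the term $-rE^2=r$ survives, giving $LK_S=L_0K_{S_0}+r$.

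For part (i) I would combine these two relations with the genus formula $2g-2=(K_S+L)\cdot L$. Substituting $L^2=L_0^2-r^2$ and $LK_S=L_0K_{S_0}+r$ and comparing with $2g(S_0,L_0)-2=(K_{S_0}+L_0)\cdot L_0$ reduces everything to the elementary identity $r-r^2=-2\binom{r}{2}$, which produces $g=g(S_0,L_0)-\binom{r}{2}$; since $r\ge 1$, the binomial term vanishes exactly when $r=1$.

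The most delicate point is part (ii), where I would argue by contradiction. Assuming $\kappa(S)\ge 1$, I would pass to the minimal model $\mu\colon S\to S_{\min}$ and write $K_S=\mu^{*}K_{S_{\min}}+\mathcal E$ with $\mathcal E$ effective. Because $S$ carries the $(-1)$-curve $E$ it is not minimal, so $\mathcal E\neq 0$ and thus $L\cdot\mathcal E\ge 1$ by ampleness; and because $\kappa(S_{\min})=\kappa(S)\ge 1$ the class $K_{S_{\min}}$ is nef and not numerically trivial, so $\mu^{*}K_{S_{\min}}$ is nef and nonzero, whence the Hodge index theorem forces $L\cdot\mu^{*}K_{S_{\min}}\ge 1$. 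Adding these gives $LK_S\ge 2$, contradicting $LK_S=1$. The hard part here is ensuring both contributions are simultaneously strictly positive integers: this is exactly where the minimal-model theory of surfaces (nef-ness of the canonical class for non-negative Kodaira dimension) and the positivity of an ample class against a nonzero nef class must be used. It is also what cleanly separates the excluded case $\kappa\ge 1$ from the permitted case $\kappa=0$, in which $K_{S_{\min}}\equiv 0$ removes one of the two contributions and allows $LK_S=1$.
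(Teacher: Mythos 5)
Your proof is correct. For the decomposition $L=\sigma^*L_0-rE$, the ampleness of $L_0$ via Nakai--Moishezon, the two intersection identities, and part i), you follow essentially the same (standard) route as the paper, only spelling out details the paper leaves implicit. Where you genuinely diverge is part ii). The paper's argument is shorter and reuses what was just proved: from $1=LK_S=L_0K_{S_0}+r$ with $r\geq 1$, it notes that $\kappa(S_0)=\kappa(S)\geq 1$ makes a suitable pluricanonical multiple of $K_{S_0}$ effective and nontrivial, so $L_0K_{S_0}>0$ by ampleness of $L_0$, an immediate contradiction. You instead pass all the way to the minimal model $\mu:S\to S_{\min}$, split $K_S=\mu^*K_{S_{\min}}+\mathcal{E}$, get $L\cdot\mathcal{E}\geq 1$ from non-minimality (the $(-1)$-curve $E$) and ampleness, and get $L\cdot\mu^*K_{S_{\min}}\geq 1$ from nefness of $K_{S_{\min}}$ plus the Hodge index theorem. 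Both arguments are valid; yours invokes heavier machinery (minimal model theory, nefness of the canonical class in non-negative Kodaira dimension, Hodge index) but proves something slightly more general --- any non-minimal surface with $\kappa(S)\geq 1$ and $L$ ample satisfies $LK_S\geq 2$, independently of the particular blow-up $\sigma$ --- whereas the paper's argument is tied to, and economically exploits, the identity $LK_S=L_0K_{S_0}+r$ already at hand. Note also that your Hodge-index step could be replaced by the paper's simpler observation: a pluricanonical divisor on $S_{\min}$ is effective and nonzero when $\kappa\geq 1$, and its pullback is effective and nonzero, so intersecting with the ample $L$ is automatically positive.
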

\begin{proof}
The Nakai--Moishezon criterion proves the ampleness of $L_0$.
Assertion i) is obvious since $K_S = \sigma^* K_{S_0} + E$. To
prove assertion ii) note that
$$1 = L K_S = L_0 K_{S_0} + r.$$
We know that $r \geq 1$. If $\kappa(S) \geq 1$, then a suitably
high multiple of $K_{S_0}$ is effective and nontrivial and then
also the first summand on the right hand is positive, due to the
ampleness of $L_0$, but this gives a contradiction.
\end{proof}

Note that ampleness and spannedness of $L$ imply $h^0(L) \geq 3$
and $L^2 \geq 3$ up to well known cases.
More precisely, we have also the following

\begin{lem} \label{doublecovers}
Let $L$ be an ample and spanned line bundle on a smooth surface
$S$. Then $d=L^2 \geq 3$ unless $(S,L, e(S),g,m,m-d)$ is one of
the following:
\begin{enumerate}
\item[i)] $(\mathbb P^2, \mathcal O_{\mathbb P^2}(1), 3, 0,
0,-1)$; \item[ii)] $(\mathbb Q^2, \mathcal O_{\mathbb Q^2}(1), 4,
0, 2,0)$; \item[iii)] There exists a morphism $\pi:S \to \mathbb
P^2$ of degree $2$, branched along a smooth curve $\Delta \in
|\mathcal O_{\mathbb P^2}(2b)|$ for some integer $b \geq 2$
$($case $b=1$ fits into case {\em ii}$))$; moreover,
$L=\pi^*\mathcal{O}_{\mathbb P^2}(1)$, $e(S) = 2(2b^2-3b+3)$, $g=b-1$,
$m=2b(2b-1)\geq 12$ and $m-d=2(2b^2-b-1)\geq 10$.
\end{enumerate}
\end{lem}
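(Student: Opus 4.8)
The plan is to exploit the morphism defined by the complete linear system $|L|$. Since $L$ is spanned it yields a morphism $\phi=\phi_{|L|}:S\to\mathbb P^N$, $N=h^0(L)-1$, with $L=\phi^*\mathcal O_{\mathbb P^N}(1)$; ampleness of $L$ forces $\phi$ to contract no curve, hence $\phi$ is finite and its image $Y:=\phi(S)$ is a nondegenerate irreducible surface. Writing $k:=\deg\phi$ and $d_Y:=\deg Y$, the projection formula gives $d=L^2=k\cdot d_Y$. I would then assume $d\leq 2$ and run the analysis through this factorization, using the minimal degree bound $d_Y\geq N-1$ for a nondegenerate irreducible surface in $\mathbb P^N$.

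If $d=1$, then $k=d_Y=1$, so $Y$ is a nondegenerate surface of degree $1$, i.e. $N=2$ and $Y=\mathbb P^2$. Thus $\phi:S\to\mathbb P^2$ is a finite birational morphism onto a smooth, hence normal, target, and by Zariski's main theorem it is an isomorphism; therefore $(S,L)=(\mathbb P^2,\mathcal O_{\mathbb P^2}(1))$. The invariants $e(S)=3$ and $g=0$, together with \eqref{formula}, give $m=0$ and $m-d=-1$, which is case i). If instead $d=2$, the factorization leaves $(k,d_Y)\in\{(1,2),(2,1)\}$. In the first subcase $d_Y=2$ forces $N=3$ and $Y$ a quadric in $\mathbb P^3$; since $\phi$ is finite and birational onto the normal surface $Y$ it is again an isomorphism, so $S\cong Y$, and smoothness of $S$ rules out the quadric cone, leaving $(S,L)=(\mathbb Q^2,\mathcal O_{\mathbb Q^2}(1))$. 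Here $e(S)=4$, $g=0$, and \eqref{formula} give $m=2$, $m-d=0$, which is case ii).

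The remaining subcase $(k,d_Y)=(2,1)$ gives $Y=\mathbb P^2$ and exhibits $\phi:S\to\mathbb P^2$ as a finite double cover with $L=\phi^*\mathcal O_{\mathbb P^2}(1)$; this is the heart of the argument and the step I expect to be most delicate. Since $S$ is smooth, $\phi$ is branched along a smooth curve $\Delta$, and the structure theory of double covers forces the branch divisor to be linearly equivalent to twice the line bundle $M$ defining the cover, so $\deg\Delta$ is even, $\Delta\in|\mathcal O_{\mathbb P^2}(2b)|$ with $M=\mathcal O_{\mathbb P^2}(b)$ for some $b\geq 1$. From $K_S=\phi^*(K_{\mathbb P^2}+M)=\phi^*\mathcal O_{\mathbb P^2}(b-3)$ one computes $K_SL=2(b-3)$, whence $g=1+\tfrac12(K_S+L)L=b-1$. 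Using $e(S)=2\,e(\mathbb P^2)-e(\Delta)$ with $g(\Delta)=\binom{2b-1}{2}=(2b-1)(b-1)$ gives $e(S)=4+2(2b^2-3b+1)=2(2b^2-3b+3)$, and then \eqref{formula} yields $m=2b(2b-1)$ and $m-d=2(2b^2-b-1)$.

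Finally, I would reconcile the overlap: for $b=1$ these values reduce to those of $\mathbb Q^2$, and the cover branched over a smooth conic is the smooth quadric of case ii), whereas $b\geq 2$ produces the bounds $m=2b(2b-1)\geq 12$ and $m-d=2(2b^2-b-1)\geq 10$ recorded in case iii). Assembling the three cases $d=1$, $d=2$ with $\phi$ birational, and $d=2$ with $\phi$ a double cover completes the classification. The main obstacle is the double-cover analysis—establishing smoothness and even degree of the branch curve and propagating these through the canonical-bundle and Euler-characteristic formulas—while the only subtlety in the birational cases is invoking normality of the quadric to rule out the cone.
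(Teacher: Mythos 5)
Your proposal is correct and follows essentially the same route as the paper: the morphism defined by the complete system $|L|$, the factorization $L^2=\deg\phi\cdot\deg\phi(S)$ together with the minimal-degree bound, and the standard structure theory of smooth double planes with $\Delta\in|2M|$, $M=\mathcal O_{\mathbb P^2}(b)$. The only cosmetic difference is in computing the invariants: you get $e(S)$ from the topological formula $e(S)=2e(\mathbb P^2)-e(\Delta)$ and $g$ from the genus formula, whereas the paper computes $\chi(\mathcal O_S)$ via the projection formula for $\pi_*\mathcal O_S$ plus Noether's formula and obtains $g$ by Riemann--Hurwitz on a curve section; both yield the same values.
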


\begin{proof} It is enough to consider the morphism defined by $|L|$ and
recall that $L^2$ is the product of its degree and the degree of
the image. In case iii) note that $\pi^*|\mathcal O_{\mathbb
P^2}(1)| = |L|$ (since $b \geq 2$). Recall that $\pi_*\mathcal O_S
= \mathcal O_{\mathbb P^2} \oplus \mathcal O_{\mathbb P^2}(-b)$.
Since $\Delta\in |\mathcal O_{\mathbb P^2}(2b)|$ and $K_S =
\pi^*(K_{\mathbb P^2} + \frac{1}{2}\Delta) = \pi^*(\mathcal
O_{\mathbb P^2}(b-3))$, projection formula gives
$$h^0(K_S) = h^0(\pi_*K_S) = h^0(\mathcal O_{\mathbb P^2}(b-3)\oplus
\mathcal O_{\mathbb P^2}(-3))= h^0(\mathcal O_{\mathbb P^2}(b-3))= \binom{b-1}{2}.$$
Similarly, $h^1(K_S)=0$ and then, since $K_S^2 = 2(b-3)^2$,
Noether's formula allows us to compute $e(S)$. The value of $g$ is
provided by the Riemann--Hurwitz formula, by restricting $\pi$ to
a general element of $\pi^*|\mathcal O_{\mathbb P^2}(1)|$.
\end{proof}

The following fact will be used often.

\begin{rem}\label{well-known}
{\em Let $(S,L)$ be a smooth polarized surface of sectional genus $g\geq 2$. If $S$ is ruled, but $(S,L)$ is not a scroll, then
$g \geq 2q$. Actually, due to the assumptions, $K_S+L$ is nef, hence $(K_S+L)^2 \geq 0$. Moreover, $K_S^2 \leq 8(1-q)$. Combining these
inequalities we get
$$0 < L^2 \leq L^2 + (K_X+L)^2 = 2(K_X+L)L + K_S^2 \leq 4(g-1) + 8(1-q) = 4(g-2q+1).$$
Therefore $g > 2q-1$.}
\end{rem}

Now, observe that for a polarized surface $(S,L)$, the inequality
$m-d\geq 2g$ in \cite[Proposition 3.2]{PTu} can be further explored
by assuming that there exists a smooth curve in the linear system $|L|$, as the following result shows.

\begin{thm}\label{prop 2g+1, 2g+2}
Let $(S,L)$ be a smooth polarized surface and put $m:=c_2(J_1(L))$.
Assume that $|L|$ contains a smooth curve. Then
\begin{enumerate}
\item[(A)] $m-d=2g+1$ if and only if either
\begin{itemize}
\item[$(\alpha)$] $(m-d, g)=(3,1),(5,2)$ and $(S,L)$ is as in {\em
Table \ref{Table1}}, or \item[$(\beta)$] $g=2q\geq 4$, $S$ is the
blowing-up $\sigma :S\to S_{q,e}$ of $S_{q,e}$ at a point $p$,
$L=\sigma^*L'-\sigma^{-1}(p)$ and $L'\equiv [2C_0+(e+1)f]$.
\end{itemize}
\item[(B)] $m-d=2g+2$ if and only if either
\begin{itemize}
\item[$(\gamma)$] $(m-d, g)=(4,1), (8,3)$ and $(S,L)$ is as in {\em Table \ref{Table1}}, or
\item[$(\delta)$] $g\geq 4$ and $(S,L)$ is one of the following polarized surfaces:
\begin{enumerate}
\item[$(\delta_1)$] $S=S_{q,e}$ with $q\geq 2$, $g=2q+1$, $L\equiv [2C_0+(e+2)f]$ and $d=8$;
\item[$(\delta_2)$] $S=S_{2,-1}$, $g=5$, $L\equiv [3C_0-f]$ and $d=3$;
\item[$(\delta_3)$] $S$ is the
blowing-up $\sigma :S\to S_{q,e}$ of $S_{q,e}$ at two points $p_1, p_2$, lying on distinct fibers, $g=2q$,
$L=\sigma^*L'-\sigma^{-1}(p_1)-\sigma^{-1}(p_2)$, $L'\equiv [2C_0+(e+1)f]$ and $d=2$.
\end{enumerate}
\end{itemize}
\end{enumerate}
\end{thm}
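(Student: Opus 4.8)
The plan is to convert the two conditions into statements about the topological Euler number and then to split the analysis according to the size of $g=g(S,L)$. Using \eqref{formula}, which reads $m-d=e(S)+4(g-1)$, condition (A) is equivalent to $e(S)=5-2g$ and condition (B) to $e(S)=6-2g$. When $m-d\le 8$, i.e.\ for $g\le 3$, the pair $(S,L)$ already occurs in Table \ref{Table1}, and it suffices to retain those entries whose sectional genus equals $\tfrac12\big((m-d)-1\big)$ (case A) or $\tfrac12\big((m-d)-2\big)$ (case B) and whose linear system $|L|$ contains a smooth curve. In case (A) this leaves exactly $(m-d,g)=(3,1),(5,2)$, since no entry of Table \ref{Table1} with $m-d=7$ has sectional genus $3$; this is $(\alpha)$. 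In case (B) it leaves $(m-d,g)=(4,1),(6,2),(8,3)$, which is $(\gamma)$. Thus the whole problem reduces to the range $g\ge 4$, where $m-d\ge 9$ lies beyond Table \ref{Table1}.

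Assume now $g\ge 4$. Then $e(S)=5-2g\le -3$ (case A) or $e(S)=6-2g\le -2$ (case B), so $e(S)<0$. By the Enriques--Kodaira classification a surface with negative Euler number is birationally ruled over a smooth curve $B$ of genus $q=q(S)\ge 2$; I would write $S$ as the blow-up of a geometrically ruled surface $S_{q,e}$ at $k\ge 0$ points, whence $e(S)=4(1-q)+k$. Comparing gives $k=1+4q-2g$ (case A) and $k=2+4q-2g$ (case B); in particular $k$ is odd in (A) and even in (B), while $k\ge 0$ forces $g\le 2q$ in (A) and $g\le 2q+1$ in (B). Combined with the irregularity bound $q\le g$ --- which follows from Kodaira vanishing applied to the ideal sheaf sequence of a smooth $C\in|L|$, giving an injection $H^1(\mathcal O_S)\hookrightarrow H^1(\mathcal O_C)$ --- this restricts $(q,g,k)$ to a short list.

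I would then use a smooth $C\in|L|$ of genus $g$ directly. The ruling restricts to a finite cover $C\to B$ of degree $D=L\cdot f$, so Riemann--Hurwitz yields $2g-2=D(2q-2)+R$ with $R\ge 0$. Discarding $D=1$ (then $C$ is a section, and any blow-up would make the fibre through a blown-up point pair trivially with $L$, so $S$ is a scroll and $m-d=0$, incompatible with $g\ge 4$), and combining $R\ge 0$ with the bounds $g\le 2q$ (resp.\ $g\le 2q+1$), one is left with $D\in\{2,3\}$ and with $g\in\{2q-1,2q\}$ (resp.\ $g\in\{2q-1,2q,2q+1\}$); after the elimination discussed below the surviving data are $g=2q$, $k=1$, $D=2$ in case (A), and $g=2q+1$, $k=0$, $D\in\{2,3\}$ with $D=3$ only for $q=2$, in case (B). Finally I would determine the class of $L$: writing the image of $C$ on $S_{q,e}$ as $aC_0+bf$ with $a=D$, and imposing the values of $g$ and of $d=L^2$ together with ampleness (Nakai--Moishezon, i.e.\ positivity against $C_0$, $f$ and the exceptional curve), one is led to $L'\equiv 2C_0+(e+1)f$ with a single blow-up in case $(\beta)$, and to $L\equiv 2C_0+(e+2)f$, resp.\ $L\equiv 3C_0-f$ on $S_{2,-1}$, in cases $(\delta_1)$ and $(\delta_2)$. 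The converse is a routine intersection computation on $S_{q,e}$ and its one-point blow-up (using Lemma \ref{lemma}), the announced value of $m-d$ being recovered from \eqref{formula}.

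The main obstacle is precisely the elimination invoked above. The Euler-number and Riemann--Hurwitz constraints are met by several spurious configurations that do not actually occur: in case (A), those with $D=3$ (forced to $q=2$) and those with three extra blow-ups ($k=3$, $g=2q-1$); in case (B), the triple covers not appearing in $(\delta_2)$ and those with $k\in\{2,4\}$. These cannot be discarded by counting alone. One must show that for the offending numerical classes either $L$ is not ample --- for instance $L^2\le 0$, or some strict transform of a fibre or of $C_0$ pairs non-positively with $L$ --- or that $|L|$ carries no irreducible smooth member, the base points imposed on the image curve on $S_{q,e}$ obstructing the smoothness of $C$. This is exactly where the hypothesis that $|L|$ contains a smooth curve is decisive, through a careful interplay between the Nakai--Moishezon criterion and the existence of a smooth, hence irreducible, divisor in $|L|$.
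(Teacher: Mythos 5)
Your setup coincides with the paper's and, as far as it goes, it is correct: \eqref{formula} converts the two conditions into $e(S)=5-2g$ (resp.\ $e(S)=6-2g$), Table \ref{Table1} settles $g\leq 3$ and yields exactly $(\alpha)$ (resp.\ $(\gamma)$), and for $g\geq 4$ the negativity of $e(S)$ forces $S$ to be birationally ruled over a curve of genus $q\geq 2$, with $e(S)=4(1-q)+k$ and Riemann--Hurwitz applied to a smooth member of $|L|$ bounding the fibre degree $D$. The problem is that your argument stops exactly where the real work begins. You yourself list the configurations that survive this counting --- in (A) those with $D=3$ and those with $(g,k)=(2q-1,3)$; in (B) the extra triple covers and those with $k\in\{2,4\}$ --- and then say only that ``one must show'' that for them $L$ fails to be ample or $|L|$ has no smooth irreducible member. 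That elimination is the substance of the theorem, and nothing in the proposal carries it out; as written, this is a plan, not a proof. Moreover, your surviving list is asserted rather than derived: in case (B) with $k=0$ Riemann--Hurwitz also allows $D=4$ (an \'etale $4\colon\!1$ cover with $q=2$, $g=5$); it is excluded only by combining the genus formula with ampleness (one gets $2b-4e=(8-4q)/3\leq 0$ for $q\geq 2$, contradicting $L^2>0$), a computation that never appears in your text.

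Compare with what the paper actually does. All of your $g=2q-1$ configurations ($k=3$ in (A), $k=4$ in (B)) are killed at once by the fact, invoked in the paper, that an ample line bundle on a birationally ruled surface over a genus-$q$ curve which is not a scroll has $g\geq 2q$; you only use the much weaker bound $q\leq g$ from Kodaira vanishing, which is why these cases persist in your list. For the cases that remain, the paper computes: in (A) (one blow-up, $g=2q$) ampleness against the strict transform of the fibre gives $1\leq a\leq D-1$, and the genus formula plus $d>0$ eliminates $D=3$, leaving $(\beta)$; in (B) with $k=0$ the conic-bundle alternative gives $(\delta_1)$, and otherwise $K_S+L$ is ample and Fujita's $\Delta$-genus inequality from \cite{F0} yields $d\leq 7-q$, whence $(\delta_2)$; in (B) with $k=2$ the genus formula and $d>0$ produce a numerical contradiction. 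None of these arguments, nor substitutes for them, are supplied in the proposal. And the deferred step is not a formality: for instance the configuration $D=2$, $k=2$, $g=2q$ in case (B) --- a conic fibration with two degenerate fibres and $L=\sigma^*(2C_0+(e+1)f)-E_1-E_2$ --- satisfies every numerical constraint you impose, and ruling it out is delicate; even the paper disposes of it only through the restriction to fibre degree $a\geq 3$ in its case (jj). So your closing appeal to ``a careful interplay between the Nakai--Moishezon criterion and the existence of a smooth divisor'' names the difficulty; it does not resolve it.
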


\begin{proof}
(A) We can assume $g \geq 4$, since otherwise $m-d \leq 7$, hence $(S,L)$ is as in Table \ref{Table1}.
Then \eqref{formula} implies $0=e(S)+2g-5\geq e(S)+3$, hence
$S$ is a ruled surface. Note that
$(S,L)$ is not a scroll since $m-d\neq 0$, hence $g\geq 2q$ by Remark \ref{well-known}. Let $\sigma:S\to S'$ be the blowing-up of a smooth ruled surface $S'$ at a finite set of points $\mathfrak{B} \subset S'$. Denote by $s$ the cardinality of $\mathfrak{B}$.
Thus $e(S)=4(1-q)+s$ and this gives $0=2(g-2q)+(s-1)$. Observe that necessarily $s \leq 1$, and $s=0$
cannot occur. Hence $s=1$, i.e. $S$ is the blowing-up of $S'$ at a single point, and $g=2q$. As a consequence, $q \geq 2$. Consider the smooth curve $C\in |L|$
and the morphism $p:C \to B$ obtained by restricting to $C$ the ruling projection of $S$ onto its base curve $B$.
By the Riemann--Hurwitz formula we obtain that
$$4q-2=2g-2=r(2q-2)+b,$$ where $r$ and $b$ are the degree and the total branching order
of $p$, respectively.
Therefore, since $(S,L)$ is not a scroll, we get
$$2\leq r=\frac{4q-2-b}{2q-2}=2+\frac{2-b}{2q-2},$$ which implies that either (i) $b=r=2$, or (ii)
$b=0, q=2$ and $r=3$. Moreover, write $L=\sigma^*L'- \nu E$, where $E$ is the exceptional divisor contracted by $\sigma$, $L'$ is an ample divisor on $S'$ and $\nu$ is a positive integer.
Let $L'\equiv [rC_0+\beta f]$ and note that
$$1 \leq (\sigma^*f-E)(\sigma^*L'- \nu E)=(\sigma^*f-E)(\sigma^*(rC_0+\beta f)- \nu E) = r - \nu,$$ i.e.
$1\leq \nu \leq r-1$. Thus in case (i) we have $\nu =1$ and if $S'$ has invariant $e$ then by the
genus formula we deduce that
$$4q-2=2g-2=(K_S+L)L=(K_{S'}+L')L'=4(q-1)+(2\beta -2e),$$ i.e. $\beta =e+1$. This gives case $(\beta)$ in the statement.

Finally, in case (ii) we see from $q=2$, $r=3$, $b=0$ that $g=4$ and $1\leq \nu \leq 2$. By the
genus formula, we get the following relation
$$\nu (\nu - 1) = 2(2\beta -3e).$$
According to it, $\nu=1$ would imply $2\beta - 3e=0$, but this contradicts the ampleness of $L'$. On the other hand,
$\nu=2$ would imply $2\beta-3e=1$, hence $4 = \nu^2 < L^2 + \nu^2 = {L'}^2  = 3(2\beta - 3e)=3$, a contradiction. Thus case (ii) cannot occur.

\bigskip

\noindent (B) As in case (A), we can suppose that $g\geq 4$. Then $0=e(S)+2g-6\geq e(S)+2$, hence $S$ is a ruled surface.
Moreover, note that $(S,L)$ is not a scroll over a curve. Hence $g\geq 2q$ by Remark \ref{well-known} again and using the same notation as in (A),
we can write $e(S)=4(1-q)+s$ for some integer $s\geq 0$. This gives $0=2(g-2q)+(s-2)$, i.e. $0\leq s\leq 2$. Thus we have either (j) $s=0, g=2q+1$, or (jj) $s=2, g=2q$.

In case (j), $S=S_{q,e}$ is a geometrically ruled surface over a smooth curve $B$. Note that $4 \leq g = 2q+1$ implies
$q \geq 2$. We can write $L \equiv [aC_0+bf]$ with $a \geq 2$, since $(S,L)$ is not a scroll. Then
\begin{equation} \label{L^2}
L^2 = a(2b-ae).
\end{equation}
If $a=2$ then $(S,L)$ is a conic bundle and
$$d=L^2=2(K_S+L)L+K_S^2=4(g-1)+8(1-q)=4(g-2q)+4=8,$$
which gives $L \equiv [2C_0+(e+2)f]$; moreover, $-q \leq e \leq 1$ in view of the
Nagata inequality and the ampleness conditions \cite[p.\ 382]{H}. This gives case $(\delta_1)$ of the statement.
Next, assume that $a\geq 3$. Recalling the expression of $K_S$,
we obtain
$$(K_S+L)^2 = \big((a-2)C_0+(b-e+2q-2)f\big)^2 = (a-2)\big(2b-ae + 4(q-1)\big) \geq 4q-3,$$
in view of the ampleness of $L$. So
\begin{eqnarray} \label{long}
L^2 \leq L^2+(K_S+L)^2-4q+3=2(K_S+L)L+K_S^2-4q+3\\
=4(g-1)+8(1-q)-4q+3=8q+8-8q-4q+3 = 11-4q \leq 3, \notag
\end{eqnarray}
as $q \geq 2$. Combining this with \eqref{L^2} and the ampleness conditions, we get
$$3 \geq L^3 = a(2b-ae) \geq 3 (2b-ae) \geq 3,$$
and therefore $a=3$, $b=\frac{1}{2}(3e+1)$, $d=3$, and all inequalities in \eqref{long}
are equalities. Thus $q=2$; moreover, since $e$ has to be odd, Nagata inequality implies
$e=-1$, hence $b=-1$. This gives case $(\delta_2)$ in the statement.

\medskip

In case (jj), $S$ is obtained by a blowing-up $\sigma :S\to S'$ of a geometrically ruled surface $S'\to \Gamma$ over a smooth curve $\Gamma$ at two points $p_1$ and $p_2$.
Denote by $E_i$ the
corresponding exceptional divisor for $i=1,2$. Thus $L=\sigma^*L'- \nu_1E_1- \nu_2E_2$ for an ample line bundle $L'=[aC_0+bf]$ on $S'$ with $a\geq 2$ and positive integers $\nu_i$ for $i=1,2$.
If $a=2$, the ampleness of $L$ implies that $p_1$ and $p_2$ lie on distinct fibers and $\nu_i=1, i=1,2$. From
$$4q-2=2g-2=(K_S+L)L=(K_{S'}+L')L'=4(q-1)+2(b-e)$$
we get $b=e+1$, hence $e\leq 0$ in view of the ampleness conditions. This gives case $(\delta_3)$ in the statement.
Now let $a \geq 3$. By applying the Riemann--Hurwitz formula to the $a:1$ cover $C\to \Gamma$, we see that $3\leq a\leq\frac{2g-2}{2q-2}=\frac{2q-1}{q-1}=2+\frac{1}{q-1}$. Hence $a=3$, $q=2$ and from $1\leq (\sigma^*f-E_i)L=3-\nu_i$,
it follows that $\nu_i\leq 2$, that is, $\nu_i=1,2$. Since $q=2$, we have $g=4$ and by the genus formula we obtain that
$$6=2g-2=(K_S+L)L=(a-2)(b-ea)+a(b+2-e)-\nu_1(\nu_1-1)-\nu_2(\nu_2-1)=$$
$$=b-3e+3(b+2-e)-\nu_1(\nu_1-1)-\nu_2(\nu_2-1),$$
i.e. $$(2b-3e)=\frac{1}{2}\big( \nu_1(\nu_1-1)+\nu_2(\nu_2-1) \big).$$
Note that $0<d=3(2b-3e)-\nu_1^2-\nu_2^2$, but this leads to a numerical contradiction.
\end{proof}

\medskip

If $S$ is not a ruled surface, a result of Serrano \cite{Se} allows us to go further.

\begin{prop}\label{S not ruled}
Let $(S,L)$ be a smooth polarized surface and put
$m:=c_2(J_1(L))$. Suppose that $m-d>0$ and assume that $S$ is not
a ruled surface. Then $m-d \geq 2g+d$ unless one of the following cases occurs:
\begin{enumerate}
\item $S$ is an abelian or a bielliptic surface and $m-d = 2g+d-2$;
\item $S$ is an elliptic quasi-bundle $f:S\to B$
over a smooth curve $B$ of genus $g(B)\leq 1$, $q=1$, $p_g(S)=0$
and $m-d = 2g+d-1$; moreover,
$f$ has only multiple fibers $m_iF_i$, $i=1,..,s$, as singular fibers, where $F_i$ is a smooth elliptic curve, and letting $F$ denote
the general fiber of $f$,
one of the following holds:
\begin{enumerate}
\item $g(B)=1$, $s=1$, $m_1=2$ and $FL=2$ $($e.g., see  {\em \cite{F1}}$)$;
\item $g(B)=0$ and $(m_1,...,m_s)=(2,2,2,2,2), (4,4,4), (2,6,6)$ with $FL=2,4,6$ respectively $($e.g., see  {\em \cite{Se}}$)$.
\end{enumerate}
\end{enumerate}
\end{prop}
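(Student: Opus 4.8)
The plan is to reduce the asserted inequality to a statement about the single quantity $e(S)+K_SL$. Combining the formula $(\#)$ with the sectional genus formula $2g-2=K_SL+L^2$, one checks the identity $m-d-\big(2g(S,L)+d\big)=e(S)+K_SL-2$. Thus $m-d\geq 2g(S,L)+d$ holds if and only if $e(S)+K_SL\geq 2$, and the two exceptional families should be exactly those minimal surfaces for which $e(S)+K_SL$ equals $0$ or $1$.

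First I would dispose of the non-minimal case. By Lemma \ref{lemma} a single blow-up $\sigma\colon S\to S_0$ at a point sends $L$ to $\sigma^*L_0-rE$ with $L_0$ ample and $r=LE\geq 1$, while $e(S)=e(S_0)+1$ and $K_SL=K_{S_0}L_0+r$; hence $e(S)+K_SL$ grows by $1+r\geq 2$ at each blow-up. Since every minimal model $S_0$ of a non-ruled surface has $K_{S_0}$ nef and $e(S_0)\geq 0$, so that $e(S_0)+K_{S_0}L_0\geq 0$, iterating gives $e(S)+K_SL\geq 2$ whenever $S$ is not minimal. This already forces any exception to be a minimal surface and reduces everything to the minimal case.

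Next I would split the minimal surface $S$ (with $\kappa(S)\geq 0$) by Kodaira dimension. For general type, Noether's formula together with $\chi(\mathcal{O}_S)\geq 1$ and the Bogomolov--Miyaoka--Yau inequality force $e(S)\geq 3\chi(\mathcal{O}_S)\geq 3$, so $e(S)+K_SL\geq 3$. For $\kappa(S)=0$ the minimal model is K3, Enriques, abelian or bielliptic: the first two have $e(S)=24,12$ and are harmless, whereas abelian and bielliptic surfaces satisfy $e(S)=0$ and $K_SL=0$, giving $e(S)+K_SL=0$ and exactly case (1). For $\kappa(S)=1$ the elliptic fibration $f\colon S\to B$ is relatively minimal with $K_S^2=0$, so $e(S)=12\chi(\mathcal{O}_S)$; if $\chi(\mathcal{O}_S)\geq 1$ then $e(S)\geq 12$ and the inequality holds, so the only delicate regime is $\chi(\mathcal{O}_S)=0$, where $e(S)=0$ and the claim collapses to $K_SL\geq 2$.

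The main obstacle is precisely this last regime. Here $e(S)=0$ forces every singular fibre of $f$ to be a smooth multiple fibre, i.e.\ $S$ is an elliptic quasi-bundle, and the canonical bundle formula gives $K_S\equiv\lambda F$ with $\lambda=2g(B)-2+\sum_i(1-1/m_i)>0$ (here $F$ is a general fibre and $m_iF_i$ the multiple fibres). The condition $K_SL=\lambda\,(FL)=1$ then forces $\lambda=1/(FL)\leq 1$, whence immediately $g(B)\leq 1$. What remains is to determine the admissible tuples $(m_1,\dots,m_s)$, the matching value of $FL$, and the invariants $q=1$, $p_g=0$: this is exactly the content of Serrano's classification of elliptic quasi-bundles \cite{Se} (the $g(B)=1$ subcase being also covered by \cite{F1}), which yields the finite list $(2,2,2,2,2),(4,4,4),(2,6,6)$ over $g(B)=0$ with $FL=2,4,6$ and the single fibre $m_1=2$ over $g(B)=1$ with $FL=2$. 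Each of these has $K_SL=1$, hence $e(S)+K_SL=1$ and case (2); every other configuration in this regime has $\lambda\,(FL)\geq 2$, so $K_SL\geq 2$ and the inequality holds. Invoking Serrano's result to bypass a hands-on study of which quasi-bundles admit an ample $L$ with $FL$ as small as required is the one genuinely non-elementary ingredient of the argument.
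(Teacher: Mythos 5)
Your proof is correct and takes essentially the same route as the paper's: both reduce the claim to the single quantity $e(S)+K_SL$ via $(\#)$ and the genus formula, isolate the exceptional cases as minimal surfaces with $e(S)+K_SL\in\{0,1\}$ using the classification of minimal non-ruled surfaces (Noether plus Bogomolov--Miyaoka--Yau for general type, $e(S)=0$ forcing abelian/bielliptic in Kodaira dimension zero, $\chi(\mathcal O_S)=0$ and the quasi-bundle structure in Kodaira dimension one), and then invoke Serrano's classification \cite{Se} of the configurations with $K_SL=1$ for the final list. The only difference is organizational --- you dispose of non-minimal surfaces upfront by iterating Lemma \ref{lemma} and then sort by Kodaira dimension, whereas the paper enumerates the three cases $(e(S),K_SL)\in\{(0,0),(0,1),(1,0)\}$ and rules out $(1,0)$ by noting $K_SL=0$ forces $K_S\equiv 0$ --- but the underlying ingredients are identical.
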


\begin{proof}
Assume that $m-2d\leq 2g-1$. Then
$$e(S)+2g-2+K_SL=m-2d\leq 2g-1,$$ i.e.
$e(S)+K_SL\leq 1$. Note that
$e(S)\geq 0$ and
$K_SL\geq 0$ since $S$ is not a ruled surface. Thus we get the following three cases:

(i) $e(S)=K_SL=0$ and $m-2d = 2g-2$;

(ii) $e(S)=0, K_SL=1$ and $m-2d = 2g-1$;

(iii) $e(S)=1, K_SL=0$ and $m-2d = 2g-1$.

\noindent Case (iii) cannot occur: actually, it follows from
$K_SL=0$ that $K_S$ is numerically trivial, since $S$ is not a ruled surface.
Therefore $S$ is a minimal surface with $\kappa(S)=0$, but this contradicts
$e(S)=1$.
\noindent
In cases (i) and (ii), $S$ is a minimal surface with $\kappa(S)\leq 1$, since $e(S)=0$.
If $\kappa(S)=1$ then a multiple of the canonical bundle is nontrivial and effective, but
this contradicts (i) by the ampleness of $L$. Moreover, in case (ii), since $K_S$ is not numerically trivial,
we see that $S$ is a properly elliptic minimal
surface over a smooth curve $B$, hence $K_S^2=0$. Thus $\chi (\mathcal{O}_S)=0$, by the Noether's formula.
Then, by \cite[Proposition 4.2]{Se1}, the elliptic fibration $f:S\to B$ is a quasi-bundle, i.e. any singular fiber is a
multiple of a smooth elliptic curve \cite[Definition 1.1]{Se1}. By combining the canonical bundle formula
for an elliptic fibration with the condition $LK_S=1$, it thus follows that $f$ necessarily has some multiple
fiber and $g(B)\leq 1$. Moreover, $0 < p_g(S)+1 = q = g(B)$ or $g(B)+1$ \cite[\S 4]{Se1}, but the latter case
cannot occur if $g(B)=1$, due to the Katsura--Ueno property \cite[Proposition 4.3]{Se1}.
Then the assertion follows from
\cite{Se}, taking into account that this result only depends on the condition $LK_S=1$ (and
not $g=2$), as pointed out in \cite[final comment at p.\ 300]{Se1}.
\end{proof}

\begin{rem}\label{general type}
{\em Let $S$ be a surface of general type. Then, by combining
Noether\rq s formula with the Bogomolov--Miyaoka--Yau inequality,
we have $e(S) \geq 3$.}
\end{rem}

\vskip0.5cm

\section{Triplets $(X,\mathcal E,H)$ with low $m_2$}\label{Sec1}

\medskip

Our basic setting from here on is the following:
\bigskip
\begin{equation}\label{0}\tag{$\Diamond$}
X\ \text{\rm{is a smooth complex projective variety of dimension}}\ n,\ \mathcal{E}\ \text{\rm{is an ample vector bundle}}
\end{equation}
\begin{equation}\notag
\text{\rm{of rank $r$ on $X$ with $2 \leq r\leq n-2$ and $H$ is an
ample line bundle on $X$. Furthermore, the}}
\end{equation}
\begin{equation}\notag
\text{\rm{ample vector bundle of rank $n-2$ on $X$ given by
$\mathcal F: = \mathcal E \oplus H^{\oplus (n-r-2)}$ has a section
vanishing }}
\end{equation}
\begin{equation}\notag
\noindent \text{\rm{on a smooth surface $S \subset
X$.}}\hspace{11.3cm}
\end{equation}

\bigskip

\begin{rem}\label{rem diamond}
{\em A concrete way to fit into
\eqref{0} for $r < n-2$ is to consider the following slightly more
special setting: $X$ is a smooth complex projective variety of dimension $n, \mathcal{E}$ is an ample vector bundle
of rank $r$ on $X$ with $2 \leq r < n-2$, having a section whose zero locus is a smooth subvariety $Z \subset X$ of the expected dimension $n-r$ (which happens, e.g., if $\mathcal{E}$ is spanned), and $H$ is an ample line bundle on $X$
such that $\text{\rm{Tr}}_Z|H|$ (the trace of $|H|$ on $Z$) is base point free.}

\noindent {\em Note that in this setting the line bundle
$H_Z$ is spanned ``a fortiori".
Clearly this fits into \eqref{0} simply letting $S$ denote
the surface cut out by $n-r-2$ general elements of
$\text{\rm{Tr}}_Z|H|$. Actually, if $\sigma \in \Gamma(X,\mathcal
E)$ defines $Z$, there are sections $s_i \in \Gamma(X,H)$ whose
restrictions to $Z$ define a smooth surface
$$ S:= \bigcap_{i=1}^{n-r-2}({{s_i}|_Z})_0\ ,$$
which is the zero locus of the section $(\sigma, s_1, \dots, s_{n-r-2}) \in \Gamma(X, \mathcal F)$. }
\end{rem}

\medskip

In Subsection $4.1$ we will add to \eqref{0} the requirement that
\begin{equation} \label{S}
H_S \ \text{\rm{is spanned}}. \tag{S}
\end{equation}
Clearly, this condition is trivially satisfied in the
setting of Remark \ref{rem diamond} since, as noted,
$H_Z$ is spanned. Furthermore, in Subsection $4.2$ we will put the
stronger requirement that
\begin{equation} \label{VA}
H_S \ \text{\rm{ is very ample}}. \tag{VA}
\end{equation}

\medskip

Assuming that $(X,\mathcal{E}, H)$ is
as in \eqref{0}, we set
$$d:=H_S^2=H^2\cdot c_{n-2}=c_r(\mathcal E) \cdot H^{n-r} \qquad{\text{\rm{and}}}\qquad g:=g(S,H_S).$$
This notation is consistent with that used in Section \ref{Sec0} since
$d=H_S^2$ and $g$ are
the degree and the sectional genus of the polarized surface $(S,L):=(S,H_S)$, respectively. Moreover,
we have the following technical result.

\begin{prop}\label{proposition}
Let $(X,\mathcal{E}, H)$ and $S$ be as in \eqref{0} (see Introduction). If
$m_2=m_2(X,\mathcal{E}, H)$ is as in \eqref{*}, then
$$m_{2}=
c_2(J_1(H_S)).$$
\end{prop}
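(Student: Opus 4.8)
The plan is to reduce the identity $m_2 = c_2(J_1(H_S))$ to a computation on the surface $S$ by using the fact that $S$ is the zero locus of a section of $\mathcal{F}$, so that the normal bundle of $S$ in $X$ is $\mathcal{F}_S$. First I would recall the standard formula for the second Chern class of the first jet bundle of a line bundle $L$ on a surface $S$, namely $c_2(J_1(L)) = c_2(S) + c_1(L)^2 + c_1(S)\cdot c_1(L)$, where I write $c_i(S) := c_i(T_S)$; equivalently, using $c_1(S) = -K_S$ and $c_2(S) = e(S)$, this reads $c_2(J_1(L)) = e(S) - K_S\cdot L + L^2$. With $L = H_S$ and $d = H_S^2$ this is exactly the classical expression whose rearrangement $m - d = e(S) + 4(g-1)$ already appears as \eqref{formula} in the excerpt, so the target quantity has a clean surface-level description.

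Next I would unravel the definition \eqref{*} by restricting everything to $S$. The key geometric input is the adjunction/normal bundle sequence: since $S = (s)_0$ for a section $s$ of the rank-$(n-2)$ bundle $\mathcal{F}$, the normal bundle is $N_{S/X} \cong \mathcal{F}_S$, giving the exact sequence $0 \to T_S \to (T_X)_S \to \mathcal{F}_S \to 0$. Dualizing and taking total Chern classes, I get the relation $c(\Omega_{X,S}) = c(\Omega_S)\cdot c(\mathcal{F}_S^\vee)$, equivalently a relation among the Chern classes of $(\Omega_X)_S$, $\Omega_S$ and $\mathcal{F}_S$. The quantity in square brackets in \eqref{*} is $c_2(\Omega_X \oplus \det\mathcal{F}) + c_1(\mathcal{F})^2 - c_2(\mathcal{F}) + H^2$, and each term, once cupped with $c_{n-2}(\mathcal{F})$ and evaluated on $X$, localizes to $S$ because $c_{n-2}(\mathcal{F})$ is (Poincaré dual to) the class of $S$. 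So the strategy is: push every factor onto $S$, replace $(\Omega_X)_S$-classes by $\Omega_S$- and $\mathcal{F}_S$-classes via the normal sequence, and replace $c_1(\mathcal{F})_S$ by $c_1(\mathcal{F}_S)$, $H_X$ by $H_S$, etc.

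Concretely, I would compute $c_2(\Omega_X \oplus \det\mathcal{F})_S = c_2(\Omega_{X,S}) + c_1(\Omega_{X,S})\cdot c_1(\det\mathcal{F})_S$, then expand $c_1(\Omega_{X,S})$ and $c_2(\Omega_{X,S})$ using the Whitney formula from the normal sequence: writing $a_i := c_i(\Omega_S)$ and $b_i := c_i(\mathcal{F}_S^\vee) = (-1)^i c_i(\mathcal{F}_S)$, one has $c_1(\Omega_{X,S}) = a_1 + b_1 = K_S - c_1(\mathcal{F}_S)$ and $c_2(\Omega_{X,S}) = a_2 + a_1 b_1 + b_2$. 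Substituting these, together with $a_2 = c_2(\Omega_S) = e(S)$ and $a_1 = K_S$, and collecting terms, I expect the bracket-times-$c_{n-2}(\mathcal{F})$ contribution to collapse (the $\mathcal{F}_S$-Chern-class cross terms should cancel against $c_1^2 - c_2$ and against the $\det\mathcal{F}$ term) into $e(S) - K_S\cdot H_S + H_S^2$ plus a residual multiple of $(g-1)$; then adding the $+4(g-1)$ tail and using the sectional genus definition $2(g-1) = (K_S + H_S)\cdot H_S$ should yield precisely $c_2(J_1(H_S))$.

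The main obstacle I anticipate is bookkeeping rather than conceptual: correctly performing the localization to $S$ for the terms involving $c_2(\Omega_X \oplus \det\mathcal{F})$, making sure that cupping with $c_{n-2}(\mathcal{F})$ genuinely restricts each class to $S$ (this uses that $c_{n-2}(\mathcal{F})$ is the top Chern class realizing $[S]$, via the section $s$), and keeping the signs from dualizing $\mathcal{F}_S$ straight so that the unwanted $\mathcal{F}_S$-contributions cancel. I would verify the cancellation by checking the two sanity cases highlighted in the introduction — when $\mathcal{F}$ splits so $S$ is a genuine surface section — against the classical identity $m = e(S) + 2K_S H_S + 3H_S^2$, which should reproduce \eqref{*} exactly and confirm the algebra.
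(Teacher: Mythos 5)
Your strategy coincides with the paper's own proof: cup with $c_{n-2}(\mathcal{F})$ to localize the definition \eqref{*} to $S$, use the conormal sequence $0 \to \mathcal{F}^\vee_S \to (\Omega_X)_S \to \Omega_S \to 0$ together with the Whitney formula to eliminate the $(\Omega_X)_S$- and $\mathcal{F}_S$-classes, and compare what remains with the expression of $c_2(J_1(H_S))$ in surface invariants. The cancellation you anticipate does occur, and in fact more cleanly than you predict: writing things out with your own notation $a_i, b_i$, the whole bracket in \eqref{*}, restricted to $S$, collapses exactly to $e(S) + H_S^2$ (there is no $-K_S\cdot H_S$ term and no residual multiple of $g-1$), so that $m_2 = e(S) + H_S^2 + 4(g-1)$.

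There is, however, a concrete error that would make your verification fail as written: the formula you ``recall'' for the target is not the second Chern class of the jet bundle. You wrote $c_2(J_1(L)) = c_2(T_S) + c_1(T_S)\cdot c_1(L) + c_1(L)^2 = e(S) - K_S\cdot L + L^2$; that is the formula for $c_2(T_S \otimes L)$. The jet bundle is the extension $0 \to \Omega_S \otimes L \to J_1(L) \to L \to 0$ (cotangent, not tangent, and with the extra quotient $L$ producing a cross term), whence $c_2(J_1(L)) = c_2(\Omega_S \otimes L) + c_1(\Omega_S \otimes L)\cdot c_1(L) = e(S) + 2K_S L + 3L^2$. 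Your formula does not rearrange to \eqref{formula} as you claim (it would give $m - d = e(S) - K_S L$ rather than $e(S)+4(g-1)$), and it already fails for $(\mathbb{P}^2, \mathcal{O}_{\mathbb{P}^2}(1))$, where it yields $7$ instead of $0$; note also that your final paragraph quotes the correct identity $m = e(S) + 2K_S H_S + 3H_S^2$, so the proposal is internally inconsistent. Once the target formula is corrected, your plan closes exactly as in the paper: $m_2 = e(S) + H_S^2 + 4(g-1) = e(S) + 2K_S H_S + 3H_S^2 = c_2(J_1(H_S))$, the last identification being precisely the jet-sequence computation $c_2(\Omega_S \otimes H_S) + c_1(\Omega_S \otimes H_S)\cdot H_S$ with which the paper's proof ends.
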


\begin{proof}
Consider the dual of the tangent--normal bundle sequence of $S \subset X$
$$0\to \mathcal{N}_{S/X}^\vee\cong {\mathcal{F}^\vee}_S\to
(\Omega_X)_S\to\Omega_S\to 0. $$
It fits into the following diagram
$$\begin{matrix}
& & 0 & & & & & & \\
& & \downarrow & & & & & & \\
& & \mathcal{N}_{S/X}^\vee\otimes H_S & \cong & {\mathcal{F}^\vee}_S\otimes H_S & & & &\\
& & \downarrow & & & & & &\\
0 & \to & ({\Omega_X}\otimes H)_S & \to & J_1(H)_S & \to & H_S & \to & 0 \\
& & \downarrow & & & & & &\\
0 & \to & {\Omega_S}\otimes H_S & \to & J_1(H_S) & \to & H_S & \to & 0 \\
& & \downarrow & & & & & &\\
& & 0 & . & & & & &\\
\end{matrix}$$
Then, recalling $(*)$ we get
$$m_{2}=m_{2}(X,\mathcal E,H)=\left[c_2(\Omega_X\oplus c_1)+c_1^2-c_2+H^2\right]_S+4(g-1)=$$
$$=\left[c_2({\Omega_X}_S)+c_1({\Omega_X}_S)c_1(\mathcal{F}_S)+c_1(\mathcal{F}_S)^2-c_2(\mathcal{F}_S)\right]
+H_S^2+$$
$$+4(g(S,H_S)-1)=\left[c_2({\Omega_X}_S)-c_1(\Omega_S)c_1(\mathcal{F}^\vee_S)-c_2(\mathcal{F}^\vee_S)\right]+H_S^2+$$
$$+4(g(S,H_S)-1)=c_2(\Omega_S)+H_S^2+2(K_S+H_S) H_S=$$
$$=\left[c_2(\Omega_S)+c_1(\Omega_S)H_S+H_S^2\right]+\left[(K_S+2H_S) H_S\right]=$$
$$=c_2(\Omega_S\otimes H_S)+c_1(\Omega_S\otimes H_S)
H_S=c_2(J_1(H_S)).$$
\end{proof}

\begin{lem} \label{lemma 1}
Suppose that there exists an effective divisor
$E\cong\mathbb{P}^{n-1}$ on $X$ such that
$$(E,\mathcal{F}_E,E_E)\cong (\mathbb{P}^{n-1},\mathcal{O}_{\mathbb{P}^{n-1}}(1)^{\oplus (n-2)},
\mathcal{O}_{\mathbb{P}^{n-1}}(-1)).$$ If $q>0$, then $S$
is not ruled.
\end{lem}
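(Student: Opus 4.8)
The plan is to first pin down how $S$ meets $E$. Restricting the section $s\in\Gamma(X,\mathcal F)$ defining $S$ to $E$, the hypothesis $\mathcal F_E\cong\mathcal O_{\mathbb P^{n-1}}(1)^{\oplus(n-2)}$ shows that $\ell:=S\cap E=(s|_E)_0$ is the common zero of $n-2$ linear forms on $\mathbb P^{n-1}$, hence a line $\ell\cong\mathbb P^1\subset E$. I would then show that $\ell$ is a $(-1)$-curve on $S$. From the adjunction formula for the zero locus of a section, $K_S=(K_X+c_1(\mathcal F))|_S$, together with $\mathcal O_S(\ell)=\mathcal O_X(E)|_S$ and the identity $K_X|_E\cong\mathcal O_{\mathbb P^{n-1}}(-(n-1))$ (which follows from $E_E\cong\mathcal O_{\mathbb P^{n-1}}(-1)$ and adjunction for $E\subset X$), one computes $c_1(\mathcal F)\cdot\ell=n-2$ and $K_X\cdot\ell=-(n-1)$, hence $K_S\cdot\ell=-1$, while $\ell^2=\deg(N_{E/X}|_\ell)=-1$. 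Thus $\ell$ is a $(-1)$-curve and it can be contracted, $\sigma\colon S\to S_0$, with $q(S_0)=q$ and $\kappa(S_0)=\kappa(S)$.

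Next I would exploit that $E_E\cong\mathcal O(-1)$ realizes $E$ as the exceptional divisor of a smooth blow-down $\pi\colon X\to X'$ of a point $p$, with $\pi|_S=\sigma$. Since $\mathcal F(E)|_E\cong\mathcal O_E^{\oplus(n-2)}$ is trivial, $\mathcal F(E)$ descends to a rank-$(n-2)$ bundle $\mathcal G$ on $X'$ with $\pi^*\mathcal G\cong\mathcal F(E)$; multiplying $s$ by the tautological section of $\mathcal O_X(E)$ and pushing forward produces a section $s'\in\Gamma(X',\mathcal G)$ whose zero locus is $S_0=\pi(S)$. After checking that $\mathcal G$ is again ample, the triplet is reduced to $(X',\mathcal G)$ together with the ruled surface $S_0$ of the same irregularity $q>0$, and iterating this contraction leads to a relatively minimal model.

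The contrapositive is what I would actually prove: assuming $S$ ruled with $q>0$, I would derive a contradiction. Since $q>0$, the Albanese fibration $a\colon S\to B$ maps $S$ onto a smooth curve $B$ of genus $q$ with rational fibres, and $\ell$, being rational, lies in a single fibre. By the Lefschetz theorem for ample vector bundles one has $q(X)=q(S)=q$, so $X$ carries an induced fibration $f\colon X\to B$ contracting the rational divisor $E$, with $E$ contained in the fibre $X_{b_0}$. The step I expect to be the main obstacle is precisely the final contradiction: the purely cohomological Albanese input does \emph{not} suffice by itself, because a reducible fibre of $f$ can a priori contain $E$ as an exceptional component (indeed the blow-down picture above is cohomologically consistent). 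The contradiction must therefore use the ampleness of $\mathcal F$ in an essential way. Concretely, I would invoke the structure theory for ample vector bundles whose section vanishes on a ruled surface over a curve of positive genus (the circle of results used throughout the paper), which forces $(X',\mathcal G)$ to be a scroll, i.e. $X'$ a projective bundle over $B$. Such an $X'$, and hence $X=\mathrm{Bl}_p X'$, is incompatible with the negativity $N_{E/X}\cong\mathcal O_{\mathbb P^{n-1}}(-1)$ demanded by the hypothesis, and this incompatibility yields the desired contradiction, showing that $S$ cannot be ruled.
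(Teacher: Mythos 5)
Your first two steps reproduce exactly the paper's setup: the paper quotes \cite[Lemma 5.1]{LM2} and \cite[Lemma 2.2]{L2} to obtain what you compute by hand, namely that $E|_S$ is a $(-1)$-curve, that $E$ is contracted by a blow-down $f:X\to X'$, and that $\mathcal{F}\cong f^*\mathcal{F}'\otimes\mathcal{O}_X(-E)$ for an ample bundle $\mathcal{F}'$ on $X'$ whose section cuts out $S'=f(S)$. The gap is in your final step, and it is twofold. First, the structure theorem you invoke (an ample vector bundle with a section vanishing on an irrational ruled surface forces the ambient variety to be a scroll, i.e.\ \cite{LM2}, Lemma \ref{lemma 2} of the paper) applies only to \emph{geometrically} ruled surfaces, i.e.\ $\mathbb{P}^1$-bundles. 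After contracting the single $(-1)$-curve $E|_S$, the surface $S'$ is only birationally ruled and may still be non-minimal, so that theorem cannot be applied to $(X',\mathcal{F}',S')$; nor can you simply "iterate to a relatively minimal model," because further $(-1)$-curves on $S'$ need not arise from exceptional divisors $\mathbb{P}^{n-1}\subset X'$. This is precisely why the paper instead applies Maeda's theorem \cite{Ma} to $(X',\mathcal{F}')$ --- legitimate because $K_{S'}=(K_{X'}+\det\mathcal{F}')|_{S'}$ is not nef --- which yields the needed dichotomy: either (i) $X'$ contains another exceptional divisor $E'$ of the same type (handled recursively, the Picard number dropping at each step), or (ii) $X'$ is covered by projective spaces or by quadrics.

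Second, the contradiction you propose is not a contradiction: if $X'$ were a $\mathbb{P}^{n-1}$-bundle over $B$, then $X=\mathrm{Bl}_p X'$ would automatically contain $E\cong\mathbb{P}^{n-1}$ with $N_{E/X}\cong\mathcal{O}_{\mathbb{P}^{n-1}}(-1)$ --- this is what the exceptional divisor of any point blow-up looks like --- so there is no tension whatsoever with the hypothesis of the lemma. The genuine contradiction, in case (ii) above, does use the ampleness of $\mathcal{F}$ exactly where you anticipated it must enter, but via a different mechanism: since $q(X')=q(S')=q>0$, every line of $X'$ is contained in a fiber of the covering family; taking a line $l'$ through $p'=f(E)$ inside a fiber and its proper transform $l\subset X$, one computes $\mathcal{F}_l\cong f^*(\mathcal{F}'_{l'})\otimes\mathcal{O}_l(-1)\cong\mathcal{O}_l(a-1)\oplus\mathcal{O}_l^{\oplus(n-3)}$ with $a\in\{1,2\}$, which contradicts ampleness since $n\geq 4$. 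This restriction-to-a-proper-transform computation (the analogue, for lines through the blown-down point, of the computation you did for $\ell\subset E$ in your first step) is the missing ingredient; the negativity of $N_{E/X}$ enters only through the twist $\mathcal{O}_l(-1)$, not as a geometric obstruction to $X'$ being a bundle.
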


\begin{proof} Suppose that $\kappa(S)=-\infty$ and note that $S\neq\mathbb{P}^2$. Let
$f:X\to X'$ be the contraction of $E$. Then by \cite[Lemma
5.1]{LM2} and \cite[Lemma 2.2]{L2} we know that there exist an
ample vector bundle $\mathcal{F}'$ of rank $n-2$ on $X'$ and a
section $s'\in\Gamma(\mathcal{F}')$ such that $\mathcal{F}\cong
f^*\mathcal{F}'\otimes\mathcal{O}_X(-E)$, $S':=(s')_0$ is a smooth
surface and $f|_S:S\to S'$ is a birational morphism which
contracts the $(-1)$-curve $E|_S$. Since
$K_{S'}=[K_{X'}+\det\mathcal{F}']_{S'}$ is not nef, $S'$ being
ruled, and $q(S')>0$,
\cite[Theorem]{Ma} implies one of the following possibilities:
\begin{enumerate}
\item[(i)] there exists an effective divisor $E'$ on $X'$ such that
$$(E',\mathcal{F'}_{E'},E'_{E'})\cong (\mathbb{P}^{n-1},
\mathcal{O}_{\mathbb{P}^{n-1}}(1)^{\oplus
(n-2)},\mathcal{O}_{\mathbb{P}^{n-1}}(-1));$$
\item[(ii)] there is a surjective morphism $\varphi:X' \to W$ expressing $X'$ either (a) as
a $\mathbb P^t$-bundle
over a smooth variety $W$ of dimension $\dim W \leq 2$, or
(b) as a quadric fibration over a smooth curve $W$.
\end{enumerate}
We show that case (ii) does not occur.
If (ii) holds, then
$X'$ is covered by lines. Note
that any line of $X'$ is contained in a fiber of $\varphi$ since
$q(X')=q(S')=q>0$. Suppose that $p':=f(E)$ lies on a smooth
fiber of $X'$, take a line $l'$ passing through $p'$ and consider
its proper transform $l$ via $f$. Then
$$\mathcal{F}_l\cong (f^*\mathcal{F}'\otimes\mathcal{O}_X(-E))_l\cong
f^*(\mathcal{F}'_{l'})\otimes\mathcal{O}_l(-1)\cong
f^*(\mathcal{F}_{\mathbb{P}^t}')_{l'}\otimes\mathcal{O}_l(-1)$$
$$\cong f^*(\mathcal{O}_{l'}(a)\oplus\mathcal{O}_{l'}(1)^{\oplus (n-3)})\otimes\mathcal{O}_l(-1)\cong
\mathcal{O}_l(a-1)\oplus\mathcal{O}_l^{\oplus (n-3)},$$ for
$a=1,2$ \cite[Theorem, cases (10), (11), (13)]{Ma}. But this contradicts the ampleness of $\mathcal{F}$ since
$n\geq 4$. On the other hand, if we are in case (ii)(b) and $p'$
is contained in a singular fiber, then we have $\deg
\mathcal{F'}_{l'}= \deg \mathcal{F'}_{\lambda'}=n-2$ for some line
$\lambda'$ contained in a smooth fiber \cite[Theorem, case (12)]{Ma}. By the ampleness of
$\mathcal{F}'$, we get
$\mathcal{F'}_{l'}=\mathcal{O}_{l'}(1)^{\oplus (n-2)}$. Thus the
same argument as above with $a=1$ applies and this shows that this
case cannot occur as well. Finally, if we are in case (i), by a
recursive argument we get a contradiction.
\end{proof}

\begin{lem} \label{lemma 2}
Let $(X,\mathcal E, H)$ be as in \eqref{0}, let $\mathcal F=
\mathcal E \oplus H^{\oplus (n-r-2)}$, and suppose that $S$ is a
$\mathbb P^1$-bundle over a smooth curve $B$ of positive genus.
Then $X$ is a $\mathbb P^{n-1}$-bundle over $B$, with the
projection $p:X \to B$ inducing the ruling of $S$, and $\mathcal
F_F = \mathcal O_{\mathbb P^{n-1}}(1)^{\oplus (n-2)}$ for every
fiber $F \cong \mathbb P^{n-1}$ of $p$. In particular, either $r <
n-2$ and $(S,H_S)$ is a scroll, or $r=n-2$ and $H_F = \mathcal
O_{\mathbb P^{n-1}}(t)$ with $t = H_S f$, $f$ being any fiber of
$S$. Conversely, if $(X, \mathcal{F})$  satisfies the above
conditions, then $S$ is a $\mathbb{P}^1$-bundle
 over $B$; moreover, $(S,H_S)$ is a scroll if either $r < n-2$ or $H_F = \mathcal O_{\mathbb P^{n-1}}(1)$.
\end{lem}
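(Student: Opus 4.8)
The plan is to reduce everything to the behaviour of $\mathcal F$ and $K_X$ along a ruling fibre $f\cong\mathbb P^1$ of $\pi\colon S\to B$, and then to promote the ruling of $S$ to a $\mathbb P^{n-1}$-bundle structure on $X$ by an extremal-contraction argument. First I would compute the restriction of $\mathcal F$ to such a fibre. Since $S=(s)_0$ for a section $s\in\Gamma(X,\mathcal F)$ one has $\mathcal N_{S/X}\cong\mathcal F_S$ and, by adjunction, $K_S=(K_X+\det\mathcal F)_S$. A fibre $f$ of $\pi$ is a $\mathbb P^1$ with $f^2=0$ in $S$, so $\mathcal N_{f/S}\cong\mathcal O_f$ and the exact sequence of normal bundles of $f\subset S\subset X$ reads
$$0\to\mathcal O_f\to\mathcal N_{f/X}\to\mathcal F_f\to 0 .$$
As $\mathcal F_f$ is ample of rank $n-2$ on $\mathbb P^1$ we may write $\mathcal F_f\cong\bigoplus_{i=1}^{n-2}\mathcal O_f(a_i)$ with every $a_i\ge 1$; feeding this into the tangent sequence $0\to T_f\to (T_X)_f\to\mathcal N_{f/X}\to 0$ yields
$$-K_X\cdot f=\deg (T_X)_f=2+\textstyle\sum_i a_i\ \ge\ n .$$

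The heart of the matter, and the step I expect to be the main obstacle, is converting this numerical inequality into a $\mathbb P^{n-1}$-bundle structure. Because $f\cong\mathbb P^1$ satisfies $-K_X\cdot f\ge n=\dim X$, the class $[f]$ should span a $K_X$-negative extremal ray $R$ whose contraction $p\colon X\to Y$ is of fibre type with fibres of dimension $\ge n-1$; this is precisely the regime governed by Wi\'sniewski's inequality and by the structural results on ample bundles vanishing on special surfaces quoted in the paper (\cite{L2}, \cite{LM2}, \cite{Ma}). The delicate point is controlling the \emph{length} of $R$, since a priori $R$ could contain rational curves of $(-K_X)$-degree smaller than that of $f$. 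I would handle this by passing to a general fibre $F$ of $p$, which is a Fano manifold with $\mathcal N_{F/X}$ trivial, so that $K_X|_F=K_F$ and $-K_F\cdot f\ge n$; this allows me to exclude $\dim Y\ge 2$ and to identify $F\cong\mathbb P^{n-1}$. That $Y\cong B$ and that $p$ induces $\pi$ then follows because $p|_S$ contracts the fibres $f$ and hence factors through $\pi$, while $q(B)>0$ forbids $S$ from lying inside a single rational fibre.

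Once $p\colon X\to B$ is realised as a $\mathbb P^{n-1}$-bundle with the $f$'s as fibre lines, triviality of $\mathcal N_{F/X}$ gives $K_X|_F=K_F=\mathcal O_{\mathbb P^{n-1}}(-n)$, whence $-K_X\cdot f=n$, forcing $\sum_i a_i=n-2$, i.e. $a_i=1$ for all $i$ and $\mathcal F_f\cong\mathcal O_f(1)^{\oplus(n-2)}$. Since an ample bundle on $\mathbb P^{n-1}$ restricting to $\mathcal O(1)^{\oplus(n-2)}$ on every line is itself $\mathcal O_{\mathbb P^{n-1}}(1)^{\oplus(n-2)}$ (its twist by $\mathcal O(-1)$ is trivial on all lines, hence trivial), we obtain $\mathcal F_F\cong\mathcal O_{\mathbb P^{n-1}}(1)^{\oplus(n-2)}$. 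The ``in particular'' clause is then immediate from the splitting $\mathcal F_F=\mathcal E_F\oplus H_F^{\oplus(n-r-2)}$: if $r<n-2$ some summand equals $H_F$, which must be $\mathcal O(1)$, so $H_S\cdot f=1$ and $(S,H_S)$ is a scroll; if $r=n-2$ then $\mathcal F=\mathcal E$ and $H_F=\mathcal O_{\mathbb P^{n-1}}(t)$ with $t=H_S\cdot f$.

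For the converse, assume $X\xrightarrow{p}B$ is a $\mathbb P^{n-1}$-bundle with $\mathcal F_F=\mathcal O(1)^{\oplus(n-2)}$, and let $s\in\Gamma(X,\mathcal F)$ define $S$. On each fibre $F\cong\mathbb P^{n-1}$ the restriction $s|_F$ is a system of $n-2$ linear forms, so $S\cap F$ is a linear subspace of dimension $\ge 1$; smoothness of $S$ together with $\dim S=2$ forces these forms to be independent, so $S\cap F$ is a line and $p|_S\colon S\to B$ is a $\mathbb P^1$-bundle. Finally, $(S,H_S)$ is a scroll exactly when $H_S\cdot f=1$, which holds whenever $r<n-2$ (forcing $H_F=\mathcal O(1)$ as above) or whenever $H_F=\mathcal O_{\mathbb P^{n-1}}(1)$.
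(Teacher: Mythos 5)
Your peripheral steps are fine and agree with the paper: the normal-bundle computation along a ruling fibre $f$ (giving $-K_X\cdot f = 2+\sum_i a_i \geq n$), the ``in particular'' clause (a summand $H_F$ of $\mathcal F_F$ must be $\mathcal O_{\mathbb P^{n-1}}(1)$ when $r<n-2$, hence a scroll; $t=H_Sf$ when $r=n-2$), and the converse are all essentially what the paper writes. But the paper obtains the entire structural core of the lemma --- $X$ is a $\mathbb P^{n-1}$-bundle over $B$ inducing the ruling, with $\mathcal F_F=\mathcal O_{\mathbb P^{n-1}}(1)^{\oplus (n-2)}$ --- in a single stroke, by quoting the Theorem of \cite{LM2}, which is stated exactly for an ample vector bundle of rank $n-2$ with a section vanishing on a geometrically ruled surface over a curve of positive genus. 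Your proposal instead tries to rederive that structure from the inequality $-K_X\cdot f\geq n$ by an extremal-contraction argument, and that is where it has a genuine gap.

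Concretely: (a) $-K_X\cdot f\geq n$ for a rational curve $f$ does not imply that $[f]$ spans an extremal ray --- a curve of bidegree $(1,1)$ on $\mathbb P^1\times\mathbb P^{n-1}$ has anticanonical degree $n+2$, yet its class lies in the interior of the cone of curves. (b) Your proposed repair of the ``delicate point'' is circular: to control the length of $R$ you restrict $K_X$ to a general fibre $F$ of the contraction $p$, but the existence of $p$, its fibre type, and the fact that $f$ lies in a fibre all presuppose $[f]\in R$, which is what needs proof; and even granting $[f]\in R$, the length of $R$ is an infimum over all rational curves in $R$, so it can be strictly smaller than $-K_X\cdot f$, weakening the Wi\'sniewski bound you invoke. (c) Most decisively, the triplet $(\mathbb Q^4,\mathcal O_{\mathbb Q^4}(1)^{\oplus 2},\mathcal O_{\mathbb Q^4}(1))$ of case (2) of Theorem \ref{basic1} satisfies every numerical conclusion of your first step: there $S=\mathbb P^1\times\mathbb P^1$ is ruled by lines $f$ with $-K_X\cdot f=4=n$ and $\mathcal F_f\cong\mathcal O_f(1)^{\oplus 2}$, yet $X=\mathbb Q^4$ is not a $\mathbb P^3$-bundle over a curve. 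So no argument that uses only your numerical data and invokes $g(B)>0$ merely at the end (to forbid $S$ from lying in one rational fibre) can close; the positive-genus hypothesis must drive the structural step itself. That is what \cite{LM2} does: from $(K_X+\det\mathcal F)\cdot f=K_S\cdot f=-2$ one sees $K_X+\det\mathcal F$ is not nef, one invokes Maeda's classification \cite{Ma} of such pairs $(X,\mathcal F)$, and one then eliminates all cases except the $\mathbb P^{n-1}$-bundle using $q(B)>0$ (Lemma \ref{lemma 1} of the paper is exactly this kind of sifting). If your appeal to \cite{LM2} is meant to supply this step, your proof reduces to the paper's; if it is meant to be independent of it, the central implication is unproven. (A smaller, fixable point: your last step passes from $\mathcal F_f\cong\mathcal O_f(1)^{\oplus(n-2)}$ on the single line $f\subset F$ to all lines of $F$; this needs the remark that all lines in $F$ are numerically equivalent in $X$, so ampleness plus $\det\mathcal F\cdot\ell=n-2$ gives $\mathcal F_\ell\cong\mathcal O_\ell(1)^{\oplus(n-2)}$ for every line $\ell\subset F$, and only then does the uniform-bundle argument apply.)
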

\begin{proof}
The description of $(X,\mathcal F)$, including the fibration $p:X \to B$, follows from
\cite[Theorem]{LM2}. If $r < n-2$, then $H_F = \mathcal O_{\mathbb
P^{n-1}}(1)$, being a summand of $\mathcal F_F$, and then
$(S,H_S)$ is a scroll. On the other hand, if $r=n-2$ then
$\mathcal F = \mathcal E$, so we have no information on $H$. We
can write $H_F = \mathcal O_{\mathbb P^{n-1}}(t)$ for some
positive integer $t$. Since the ruling of $S$ is induced by $p:X
\to B$ any fiber $f$ of $S$ is a line, being the zero locus of a
section of $\mathcal E_F=\mathcal O_{\mathbb P^{n-1}}(1)^{\oplus
(n-2)}$, where $F=\mathbb P^{n-1}$ is the corresponding fiber of
$X$. Thus the assertion follows from the equality
$$H_S f = H_F \cdot \big(\mathcal O_{\mathbb P^{n-1}}(1)\big)^{n-2} = t.$$
The converse is obvious.
\end{proof}

\medskip

Recall the notation $\delta := m_2-d$. As a first thing, let us
characterize the inequality $\delta < 0$.

\begin{thm} \label{basic1}
Let $(X, \mathcal{E}, H)$ be as in \eqref{0}.
Then $$\delta \geq 0$$
unless $(X,\mathcal E,H)$ is either $(\mathbb P^n, \mathcal
O_{\mathbb P^n}(1)^{\oplus r}, \mathcal O_{\mathbb P^n}(1))$
$(m_{2}=0)$, or $(\mathbb P^n, \mathcal O_{\mathbb P^n}(1)^{\oplus
(n-2)}, \mathcal O_{\mathbb P^n}(2))$ $(m_{2}=3)$. Moreover,
equality holds if and only if $(X,\mathcal E,H)$ is one of the
following:
\begin{enumerate}
\item $(\mathbb P^n, \mathcal O_{\mathbb P^n}(2) \oplus \mathcal
O_{\mathbb P^n}(1)^{\oplus (r-1)}, \mathcal O_{\mathbb P^n}(1))$;
\hfill $(m_{2}=2)$ \item $(\mathbb Q^n, \mathcal O_{\mathbb
Q^n}(1)^{\oplus r}, \mathcal O_{\mathbb Q^n}(1))$;  \hfill
$(m_{2}=2)$ \item $X$ is a $\mathbb P^{n-1}$-bundle over a smooth
curve $B$, $\mathcal E_F = \mathcal O_{\mathbb P^{n-1}}(1)^{\oplus
r}$ and $H_F = \mathcal O_{\mathbb P^{n-1}}(1)$, for every fiber
$F = \mathbb P^{n-1}$ of the bundle projection $\pi : X\to B$ and
$(S,H_S)$ is a scroll over $B$ via $\pi|_S :S\to B$.
\hfill $(m_{2}=d:=H_S^2)$
\end{enumerate}
\end{thm}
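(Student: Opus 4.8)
The plan is to reduce everything to the polarized surface $(S,H_S)$ by means of Proposition \ref{proposition}, which gives $m_2 = c_2(J_1(H_S))$, so that $\delta = m_2 - d = m - d$ in the notation of Table \ref{Table1} with $L = H_S$. First I would invoke formula \eqref{formula}, namely $m - d = e(S) + 4(g-1)$, to translate the sign condition on $\delta$ into a condition on the surface. Since $(S,H_S)$ is an ample polarized surface (being cut out by sections of the ample bundle $\mathcal{F}$), the classical theory of the class of a polarized surface applies. The condition $\delta < 0$ forces $(S,H_S)$ into case N.~1 of Table \ref{Table1}, i.e.\ $(\mathbb{P}^2, \mathcal{O}_{\mathbb{P}^2}(e))$ with $e=1,2$ (where $m - d = -1$), and the condition $\delta = 0$ forces $(S,H_S)$ to be a scroll over a smooth curve (case N.~2, where $m = d$). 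So the heart of the argument is: take the short list of admissible surfaces $(S,H_S)$ coming from the surface-level inequality, then lift each back to the triplet $(X,\mathcal{E},H)$.

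For the two exceptional cases with $\delta < 0$, I would argue that $S = \mathbb{P}^2$ with $H_S = \mathcal{O}_{\mathbb{P}^2}(e)$ lifts only to $X = \mathbb{P}^n$: since $S$ is a smooth surface section of $X$ cut out by a section of $\mathcal{F}$ of rank $n-2$, and $S \cong \mathbb{P}^2$ is covered by lines (in fact is a projective space), a standard result on ample vector bundles with a $\mathbb{P}^2$-section (of the Lauter--Mukai type cited in the bibliography, or the Fujita-style adjunction-theoretic classification) forces $(X, \det\mathcal{F})$ to be $(\mathbb{P}^n, \mathcal{O}(n-2))$ or similar, and then matching Chern-class numerology pins down $\mathcal{E}$ and $H$ to the two listed triplets with $m_2 = 0$ and $m_2 = 3$. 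The degree $d = H_S^2 = e^2$ and the computed $m_2$ must be consistent: for $H_S = \mathcal{O}(1)$ one gets $d = 1$, $m_2 = 0$; for $H_S = \mathcal{O}(2)$ one gets $d = 4$, $m_2 = 3$, matching the two claimed triplets.

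For the equality case $\delta = 0$, the surface $(S,H_S)$ must be a scroll over a smooth curve $B$; that is, $S$ is a $\mathbb{P}^1$-bundle over $B$ with $H_S$ restricting to $\mathcal{O}(1)$ on each fiber. Here I would split according to whether $B$ has positive genus or $B = \mathbb{P}^1$. When $q = q(S) = q(B) > 0$, Lemma \ref{lemma 2} applies directly and yields exactly conclusion (3): $X$ is a $\mathbb{P}^{n-1}$-bundle over $B$ with $\mathcal{E}_F = \mathcal{O}_{\mathbb{P}^{n-1}}(1)^{\oplus r}$, $H_F = \mathcal{O}_{\mathbb{P}^{n-1}}(1)$, and $(S,H_S)$ a scroll via $\pi|_S$. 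When $B = \mathbb{P}^1$, so $S$ is a Hirzebruch surface scroll, the lifting is more delicate and produces the "degenerate" cases: I expect $(X,\mathcal{E},H)$ to be either the first triplet $(\mathbb{P}^n, \mathcal{O}(2)\oplus \mathcal{O}(1)^{\oplus(r-1)}, \mathcal{O}(1))$ or the quadric case $(\mathbb{Q}^n, \mathcal{O}(1)^{\oplus r}, \mathcal{O}(1))$, both having $m_2 = 2$, together with the genuinely fibered case (3) when $B = \mathbb{P}^1$ as well. These special triplets are recognized by checking that their surface section $S$ is indeed a scroll over $\mathbb{P}^1$ (a conic or a plane, respectively, giving a Hirzebruch-surface scroll) and that the Chern-class computation returns $m_2 = 2 = d$.

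The main obstacle I anticipate is precisely the lifting step for scrolls over $\mathbb{P}^1$: unlike the $q > 0$ case covered cleanly by Lemma \ref{lemma 2}, when $B = \mathbb{P}^1$ the ambient $X$ need not be a projective-space bundle, and one must appeal to the finer classification of ample vector bundles whose section cuts out a scroll (drawing on \cite{LM1}, \cite{LM2}, and the adjunction-theoretic description of $(X,\det\mathcal{F})$) to separate the genuine bundle case (3) from the sporadic $(\mathbb{P}^n,\dots)$ and $(\mathbb{Q}^n,\dots)$ triplets. I would organize this by computing, for each candidate, the invariants $d$ and $m_2$ via the Chern-class definition \eqref{*} and confirming $\delta = 0$, while using the cited ample-vector-bundle results to certify that no further triplets arise. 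Verifying that the two $m_2 = 2$ triplets and the bundle case exhaust all scroll liftings, with no additional elliptic or higher-genus-base exotica surviving, is where the careful case analysis concentrates.
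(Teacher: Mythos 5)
Your proposal is correct and follows essentially the same route as the paper's proof: reduce to the polarized surface $(S,H_S)$ via Proposition \ref{proposition}, pin down the exceptional surfaces ($(\mathbb{P}^2,\mathcal{O}_{\mathbb{P}^2}(e))$, $e=1,2$, when $\delta<0$; scrolls over a smooth curve when $\delta=0$), and lift back to $(X,\mathcal{E},H)$ via the Lanteri--Maeda classification theorems. The only organizational difference is that the paper handles all scrolls uniformly --- over $\mathbb{P}^1$ as well as over curves of positive genus --- by a single appeal to \cite[Theorem 2]{LM3}, which is precisely the ``finer classification'' your plan acknowledges needing for the base-$\mathbb{P}^1$ case (where, incidentally, both sporadic triplets have $S\cong\mathbb{Q}^2$, a quadric surface, not a plane).
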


\begin{proof}
Since \eqref{0} holds, by Proposition \ref{proposition} and
\cite[Proposition (A.1)]{LPS} we see that
$\delta=c_2(J_1(H|_S))-d\geq 0$ except for (a)
$(S,H_S)=(\mathbb{P}^2, \mathcal O(e))$ with $e=1,2$, and
$\delta=0$ holds if and only if (b) $(S,H_S)$ is a scroll over a
smooth curve.

In (a), by \cite[Theorem A]{LM1} we know that $X=\mathbb{P}^n$ and
$\mathcal F=\mathcal O(1)^{\oplus (n-2)}$, which
gives rise to the first two triplets in the statement.

In (b), by \cite[Theorem 2]{LM3} we see that $(X,\mathcal F)$ is one
of the following pairs:
\begin{enumerate}
\item[(i)] $(\mathbb{P}^n,\mathcal O(1)^{\oplus (n-3)}\oplus\mathcal
O(2))$;
\item[(ii)] $(\mathbb{Q}^n,\mathcal O(1)^{\oplus (n-2)})$;
\item[(iii)] $X$ is a $\mathbb P^{n-1}$-bundle over a smooth curve
$B$ and $\mathcal F_F = \mathcal O_{\mathbb P^{n-1}}(1)^{\oplus
(n-2)}$ for every fiber $F = \mathbb P^{n-1}$ of the bundle
projection.
\end{enumerate}
Since in this situation $S\neq\mathbb{P}^2$, cases (i) and (ii)
give (1) and (2) of the statement with $\mathcal{E}=\mathcal
O(1)^{\oplus (r-1)}\oplus\mathcal O(2)$ and $\mathcal{E}=\mathcal
O(1)^{\oplus r}$ respectively and $H=\mathcal O(1)$ in both cases.
Finally, (iii) leads easily to case (3) of the statement.
\end{proof}

\medskip

The following result characterizes the low positive values of
$\delta$.

\begin{thm} \label{basic2}
Let $(X,\mathcal{E},H)$ be as in \eqref{0}, and suppose
that $\delta$ is
positive. Then
$$\delta\geq 3$$
with equality if and only if
$(X, \mathcal E, H) = (\mathbb P^n, \mathcal O_{\mathbb P^n}(1)^{\oplus (n-2)}, \mathcal O_{\mathbb P^n}(3))\ (m_{2}=12).$

Moreover, if $\delta = 4$ then $(X, \mathcal E, H)$ is one of the
following triplets:
\begin{enumerate}
\item $(\mathbb P^n, \mathcal O_{\mathbb P^n}(2) \oplus \mathcal O_{\mathbb P^n}(1)^{\oplus (n-3)}, \mathcal O_{\mathbb P^n}(2))$;
\hfill $(m_{2}=12)$
\item $(\mathbb Q^n, \mathcal O_{\mathbb Q^n}(1)^{\oplus (n-2)}, \mathcal O_{\mathbb Q^n}(2))$; \hfill
$(m_{2}=12)$
\item $r=n-2$, $X$ is a $\mathbb P^{n-1}$-bundle over
a smooth curve $B$ of genus $1$, $\mathcal E_F = \mathcal O_{\mathbb P^{n-1}}(1)^{\oplus (n-2)}$ and
$H_F = \mathcal O_{\mathbb P^{n-1}}(t)$,
with $t=2$ or $3$, for every fiber $F = \mathbb P^{n-1}$ of the bundle projection
$X\to B$; moreover, $(S,H_S)$ is,
up to numerical equivalence, either $(S_{1,-1},[3C_0-f])\ (m_{2}=7)$
or $(S_{1,e},[2C_0+(e+1)f])$ with $e\in\{-1,0\}$
 $(m_{2}=8)$.
\end{enumerate}

Finally, if $\delta = 5$ then $(X, \mathcal E, H)$ is one of the
following triplets:
\begin{enumerate}
\item[(4)] there is a vector bundle $\mathcal{T}$ on a smooth
curve $C$ of genus one such that
$X\cong\mathbb{P}_C(\mathcal{T})$, $H_F=\mathcal{O}_{F}(1)$ and
$\mathcal{E}_F\cong\mathcal{O}_{F}(2)\oplus\mathcal{O}_F(1)^{\oplus
(r-1)}$
for any fiber $F\cong\mathbb{P}^{n-1}$ of $X\to C$;
moreover, $S$ is the blowing-up $\sigma:S\to S_{1,e}$ of
$S_{1,e}\to C$ with $e\in\{-1,0\}$ at a point $p$ and
$H_S\equiv [\sigma^*(2C_0+(e+1)f)-\sigma^{-1}(p)]$;  \hfill $(m_{2}= 8)$
\item[(5)] there is a surjective morphism $q:X\to \Gamma$ onto a
smooth curve $\Gamma$ of genus one such that any general fiber $F$
of $q$ is a smooth quadric hypersurface of $\mathbb{P}^n$ with
$H_F=\mathcal{O}_F(1)$ and
$\mathcal{E}_F\cong\mathcal{O}_F(1)^{\oplus r}$;  moreover, $S$ is
the blowing-up $\sigma:S\to S_{1,e}$ of $S_{1,e}\to C$ with
$e\in\{-1,0\}$ at a point $p$ and
$H_S\equiv [\sigma^*(2C_0+(e+1)f)-\sigma^{-1}(p)]$; \hfill $(m_{2}=8)$
\item[(6)] $X=\mathbb{P}_{\Sigma}(\mathcal{U})$, where $\Sigma$ is
the Jacobian of a smooth curve $\gamma$ of genus $2$,
$\mathcal{U}$ is an ample vector bundle of rank $n-1$ over
$\Sigma$ and $\mathcal{E}=\pi^*\mathcal{G}\otimes
\xi$, where $\xi$ is the tautological line bundle on $X$,
$\mathcal{G}$ is a vector bundle of rank $r$ on $\Sigma$ and
$\pi:X\to \Sigma$ is the bundle projection; moreover,
$H_F=\mathcal{O}_F(t)$ for any fiber $F\cong\mathbb{P}^{n-2}$ of
$\pi$ with $t\geq 1$ and $t=1$ if $r<n-2$, $\pi|_S:S\to \Sigma$ is
the blowing-up of $\Sigma$ at a point $p$ and
$H_S=\pi|_S^*\gamma-\pi|_S^{-1}(p)$, looking at the curve $\gamma$ as embedded in its jacobian.
\hfill $(m_{2}=6)$
\end{enumerate}

\end{thm}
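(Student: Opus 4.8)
The plan is to translate the statement into one about the polarized surface $(S,H_S)$ and then to lift the resulting finite lists back to the triplet. By Proposition \ref{proposition}, $m_2=c_2(J_1(H_S))$, so, setting $m:=c_2(J_1(H_S))$, the quantity $\delta=m_2-d$ coincides with the invariant $m-d$ of $(S,H_S)$ studied in Section \ref{Sec0}; by \eqref{formula} it equals $e(S)+4(g-1)$. Since no polarized surface has $m-d\in\{1,2\}$ (entry N.~3 of Table \ref{Table1}), the hypothesis $\delta>0$ immediately gives $\delta\geq 3$. For each of $\delta=3,4,5$ the possible pairs $(S,H_S)$ then form the finite sublist of Table \ref{Table1} with $m-d$ equal to that value, and the task reduces to deciding, pair by pair, whether and how it lifts.

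The lifting is carried out uniformly: for a fixed $(S,H_S)$ I would first classify the pairs $(X,\mathcal F)$ for which the ample rank-$(n-2)$ bundle $\mathcal F$ has a section vanishing on $S$, then read off $H$ from $H_S$, and finally recover $\mathcal E$ by splitting off $H^{\oplus(n-r-2)}$ in $\mathcal F=\mathcal E\oplus H^{\oplus(n-r-2)}$. For $\delta=3$ the only candidate is $(\mathbb P^2,\mathcal O_{\mathbb P^2}(3))$ (N.~4); \cite[Theorem A]{LM1} forces $X=\mathbb P^n$ and $\mathcal F=\mathcal O(1)^{\oplus(n-2)}$ (the splitting $\mathcal O(1)^{\oplus(n-3)}\oplus\mathcal O(2)$ being discarded, as in Theorem \ref{basic1}, since its general zero locus is a quadric rather than $\mathbb P^2$), and this isolates the single triplet of the statement.

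For $\delta=4$ the candidates (N.~5--12) split into three groups. The del Pezzo pair $\mathbb P^1\times\mathbb P^1\cong\mathbb Q^2$ with $H_S=\mathcal O_{\mathbb Q^2}(2)$ (N.~5) lifts, via the classification of ample bundles whose zero locus is a quadric surface (in the spirit of \cite{LM1}), to $X=\mathbb P^n$ or $X=\mathbb Q^n$, producing triplets (1) and (2). The geometrically ruled pairs over an elliptic base (N.~7, N.~8) are governed by Lemma \ref{lemma 2}, which makes $X$ a $\mathbb P^{n-1}$-bundle over $B$ with $\mathcal F_F=\mathcal O(1)^{\oplus(n-2)}$; since $t:=H_Sf\in\{2,3\}>1$ the pair is not a scroll, forcing $r=n-2$, $\mathcal E=\mathcal F$ and $H_F=\mathcal O(t)$, which is triplet (3). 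The remaining candidates are then excluded: the minimal elliptic surface with multiple fibres (N.~9) does not lift, by \cite{LM4}; the abelian, bielliptic and Jacobian pairs with $h^0(H_S)=1$ (N.~10--12) are ruled out by the positivity obstructions of \cite{dF}, \cite{L2}; and the del Pezzo blow-up (N.~6) is disposed of by lifting its $(-1)$-curve through \cite{LM2} and Lemma \ref{lemma 1} and contradicting the ampleness of $\mathcal F$.

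The core of the proof, and the step I expect to be the main obstacle, is $\delta=5$ (N.~13--20), where almost every candidate is the one-point blow-up of a surface already encountered. For each I would pass to the minimal model by Lemma \ref{lemma}, lift the contraction $\sigma\colon S\to S_0$ to a morphism of the ambient manifolds using \cite{LM2} and Maeda's adjunction classification \cite{Ma}, and then recognise the induced scroll-, quadric-, or projective-bundle structure of $X$. This yields the $\mathbb P^{n-1}$-bundle and quadric-fibration descriptions over an elliptic curve in (4), (5) (lifts of the blow-up of N.~8) and the projective bundles $\mathbb P_\Sigma(\mathcal U)$ and $\mathbb P_{S_{1,-1}}(\mathcal V)$ over a surface base in (6), (7) (lifts of the blow-ups of the Jacobian and bielliptic surfaces, and of $(S_{1,-1},[5C_0-2f])$). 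The delicate points are: proving that the elliptic surfaces with multiple fibres concealed among N.~14--20 truly fail to lift, where \cite{LM4} is indispensable; deciding which blow-down lifts to $X$ and pinning down the bundles $\mathcal T$, $\mathcal U$, $\mathcal V$ and the numerical class of $H_S$; and keeping the bookkeeping of $r$ against $n-2$, and of the splitting $\mathcal F=\mathcal E\oplus H^{\oplus(n-r-2)}$, coherent throughout, since this is what determines whether $(S,H_S)$ is a scroll and what $H_F$ may be.
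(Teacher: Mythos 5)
Your overall architecture is the same as the paper's: reduce to the polarized surface $(S,H_S)$ via Proposition \ref{proposition} and formula \eqref{formula}, read the candidates off Table \ref{Table1}, and lift case by case. Your $\delta=3$ argument is sound (the paper quotes \cite[Theorem 4 and Remark in Sec.\ 2]{LM3} where you quote \cite[Theorem A]{LM1}, but both isolate the same triplet), and your treatment of N.~5, N.~7, N.~8 and of the non-ruled candidates N.~9--12 for $\delta=4$ matches the paper, except that only \cite{L2} (not \cite{dF}) is pertinent to N.~10--12. The first genuine gap is your exclusion of N.~6, the one-point blow-up of $\mathbb P^2$ with $H_S=-K_S$: you propose to lift its $(-1)$-curve ``through \cite{LM2} and Lemma \ref{lemma 1} and contradict the ampleness of $\mathcal F$'', but Lemma \ref{lemma 1} has the hypothesis $q>0$, and Lemma \ref{lemma 2} requires the base of the ruling to have positive genus, whereas N.~6 is rational with $q=0$. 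The ruled structure alone cannot separate N.~6 from N.~5: both are rational geometrically ruled surfaces, yet N.~5 lifts (to $\mathbb P^n$ and $\mathbb Q^n$, where the ruling of $S$ does not extend to any fibration of $X$) while N.~6 does not. What is needed is the classification of del Pezzo pairs arising as zero loci, i.e.\ \cite[Theorem 4 and Remark at the end of $\S 2$]{LM3}, which is how the paper kills N.~6 (and, for $\delta=5$, N.~13, a case your proposal never addresses).

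The second gap is that for $\delta=5$ --- which you yourself call the core --- you only name a strategy and explicitly defer the ``delicate points'', and those deferred points are precisely the content of the theorem's list. After \cite{Ma}, Lemma \ref{lemma 1} and Lemma \ref{lemma 2} produce the three ambient structures ($\mathbb P^{n-1}$-bundle over a curve with $\mathcal F_F=\mathcal O_F(2)\oplus\mathcal O_F(1)^{\oplus(n-3)}$, quadric fibration over a curve, $\mathbb P^{n-2}$-bundle over a surface), one must still decide which of the five ruled candidates (c)--(g) is compatible with which structure. In the two curve-fibration structures the paper does this by a parity computation,
$$H_S\cdot\sigma^*f \;=\; H_F\cdot c_{n-2}(\mathcal F_F)\;=\;2t,$$
which is even, while the candidates give $H_S\cdot\sigma^*f=2,3,3,5,5$; hence only case (d) survives, with $t=1$, and \cite{dF} then yields cases (4) and (5) of the statement. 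In the $\mathbb P^{n-2}$-bundle structure, case (d) is excluded by \cite{dF} again, cases (c), (f), (g) are excluded by adjunction theory applied to $K_X+\det(\mathcal F\oplus H)$ (\cite[Theorems A and C]{ABW}, using that this bundle is not nef, resp.\ not ample, and that no fiber of the bundle projection can be an exceptional $\mathbb P^{n-1}$), and case (e) forces $t=2$, hence $r=n-2$, which is case (7). None of these discriminating arguments appears in your proposal, so as written it establishes $\delta\geq 3$ and the equality case, but does not derive the stated classifications for $\delta=4$ (because of N.~6) or $\delta=5$.
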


\begin{proof} It follows from Table \ref{Table1} that
$\delta = m_2 - d \geq 3$, with equality if and only if $(S,H_S) = (\mathbb
P^2, \mathcal O_{\mathbb P^2}(3))$. By \cite[Theorem 4 and Remark
in Sec. 2]{LM3}, this pair leads to the first assertion in the
statement.

So we continue supposing that $m_2 - d \geq 4$. Now, assume that
equality holds. Taking into account the pairs $(S,H_S)$ in Table
\ref{Table1}, we see that condition $g \leq 1$ forces
$(S,H_S)$ to be either $(\mathbb P^1 \times \mathbb P^1, \mathcal
O(2,2))$ or $(S_{0,1}, -K_{S_{0,1}})$. In both cases
$(S,H_S)$ is a del Pezzo pair, but \cite[Theorem 4 and Remark at
the end of $\S 2$]{LM3} shows that only the former case lifts to
the vector bundle setting giving rise to (1) and (2) in the
statement. Next assume $g \geq 2$. If $S$ is not ruled,
according to $(\#)$ and the interpretation of $m_2$ (see
Proposition \ref{proposition}), the equality $m_2 - d=4$ implies
$g=2$ and $e(S)=0$. In this case, by \cite[Theorem
4.2]{PTu}, $S$ is a minimal surface, which is either i) an
elliptic fibration over $\mathbb P^1$ with some multiple fibers
(see \cite{Se}), or ii) an abelian or a bielliptic surface. By
\cite[Theorem]{LM4} case i) cannot occur: actually, the fact that
$S$ is minimal contradicts \cite[Theorem (a)]{LM4}
while the existence of multiple fibers is in contrast with
\cite[Theorem (b)]{LM4}.
Similarly, case ii) cannot occur since the only minimal
surface of Kodaira dimension zero occurring as zero locus of an
ample vector bundle is a K$3$ surface \cite[Theorem]{L2}.
Therefore $S$ is a ruled surface, and then, according to
\cite[Theorem 4.3]{PTu} $S$ is a $\mathbb P^1$-bundle over an
elliptic curve; moreover, $g=2$ and one of the following
cases holds:
\begin{enumerate}
\item[(a)] $S:=S_{1,-1}$ and $H_S \equiv [3C_0-f]$; \item[(b)]
$S:=S_{1,e}$ with $e\in\{-1,0\}$ and $H_S \equiv [2C_0+(e+1)f]$.
\end{enumerate}
Since $S$ is an irrational $\mathbb P^1$-bundle, we can use \cite[Theorem]{LM2}
to conclude that $X$ is a $\mathbb P^{n-1}$-bundle over a smooth curve $B$ and
$\mathcal F_F = \mathcal O_{\mathbb P^{n-1}}(1)^{\oplus (n-2)}$ for every
fiber $F$ of the bundle projection $\pi:X \to B$. This implies that
$\pi|_S$ is the bundle projection of $S$, $f$ being a line in $F$, hence $B$ is the elliptic base curve of $S$.
Moreover, we see that if $r < n-2$, then $H_F = \mathcal O_{\mathbb P^{n-1}}(1)$, as a summand of $\mathcal F_F$,
but this is in contradiction with the fact that
$1 = H f = H_S f = 2$ or $3$,
according to cases (a) and (b). Thus $r = n-2$ and $H_F = \mathcal O_{\mathbb P^{n-1}}(t)$, with $t=2$ or $3$. This gives (3) in
the statement.

Finally, assume that $m_2 - d =5$. If $g \leq 1$, then equality holds and
$S$ is the blowing-up of $\mathbb{P}^2$ at two points with $H_S=-K_S$, $q=p_g=0$ and $e(S)=5$.
Since $(S,H_S)$ is a del Pezzo surface, this situation cannot lift to the ample vector bundle setting by
\cite[Theorem 4 and Remark at the end of $\S 2$]{LM3}.
Thus $g \geq 2$. From
Table \ref{Table1} we know that
$g=2$ and $(S,H_S)$ is one of the following pairs:
\begin{enumerate}
\item[(i)] $\kappa(S)\geq 0$, $H_S^2=1$ and $S$ is the blowing-up
at a single point of the Jacobian of a smooth curve $C$ of genus $2$;
\item[(ii)] $S$ is
ruled, $q=1$ and $S$ is the blowing-up $\sigma :S\to S'$ at a
point $p$ of a $\mathbb{P}^1$-bundle $S'$ over a smooth curve $B$
of genus $1$ and one of the following conditions holds:

(c) $S'=S_{1,-1}$ and $H_S\equiv [\sigma^*(3C_0-f)-\sigma^{-1}(p)]$;

(d) $S'=S_{1,e}$ with $e=0,-1$ and
$H_S\equiv [\sigma^*(2C_0+(e+1)f)- \sigma^{-1}(p)]$.

\end{enumerate}

\noindent In case (i), since $S$ is birationally equivalent to an abelian
surface, by \cite[Theorem]{L2} we obtain case (6) in the statement. In case
(ii), since $S$ is a non-minimal ruled surface, it follows from
\cite[Theorem]{Ma} and Lemmas \ref{lemma 1} and \ref{lemma 2} that
$(X,\mathcal{F},H)$ is one of the following triplets:
\begin{enumerate}
\item[(j)] there is a vector bundle $\mathcal{V}$ on a smooth curve $C$ such that $X\cong\mathbb{P}_C(\mathcal{V})$ and $\mathcal{F}_F\cong\mathcal{O}_F(2)\oplus\mathcal{O}_F(1)^{\oplus (n-3)}$ for any fiber $F\cong\mathbb{P}^{n-1}$ of $X\to C$;
\item[(jj)] there is a surjective morphism $X\to C$ onto a smooth curve $C$ such that any general fiber $F$ of $X\to C$ is a smooth quadric hypersurface of $\mathbb{P}^n$ with
$\mathcal{F}_F\cong\mathcal{O}_F(1)^{\oplus (n-2)}$;
\item[(jjj)] there is a vector bundle $\mathcal{U}$ on a smooth surface $\Sigma$ such that $X\cong\mathbb{P}_{\Sigma}(\mathcal{U})$ and $\mathcal{F}_F\cong\mathcal{O}_F(1)^{\oplus (n-2)}$ for any fiber $F\cong\mathbb{P}^{n-2}$ of $\pi:X\to\Sigma$.
\end{enumerate}
Write $H_F=\mathcal{O}_F(t)$ for some positive integer $t$.
In cases (j) and (jj), note that the fibration $X\to C$ restricted to $S$ is the ruling projection $S\to B$, hence $C \cong B$. Moreover, we have $F_S=\sigma^*f$ for a general fiber $F$, since $F_S\cdot\sigma^*f=0$ for any general fiber $f$ of $S'$, $g(F_S)=0$ and $F_S^2=0$.
Since
$$H_S\cdot\sigma^*f=H_S\cdot F_S=H_F\cdot S_F=H_F\cdot c_{n-2}(\mathcal{F})_F=H_F\cdot c_{n-2}(\mathcal{F}_F)=2t$$ is even for any fiber $f$ of $S'$ and
\begin{equation} \notag
H_S\cdot\sigma^*f= \begin{cases}
2 \quad \mathrm{in\ case}\ $(d)$ \\
3 \quad \mathrm{in\ case}\ $(c)$,
\end{cases}
\end{equation}
we conclude that only case (d) can occur with $t=1$.
This leads to cases (4) and (5) in the statement.

In (jjj), note that case (d) cannot occur by \cite[Theorem]{dF}. Moreover, in case (c) we have
\begin{equation} \notag
[K_X+\det (\mathcal{F}\oplus H)]_S=K_S+H_S \equiv [\sigma^*(C_0)].
\end{equation}
Therefore,
$K_X+\det (\mathcal{F}\oplus H)$ is not ample.
So, by \cite[Theorem C)]{ABW} we know that there exist a morphism
$s:X\to W$ expressing $X$ as a smooth projective $n$-fold $W$ blown-up at a finite
set $\Gamma\neq\emptyset$ and an ample vector bundle $\mathcal{F}'$ on $W$ such that
$\mathcal{F}\oplus H=s^*\mathcal{F}'\otimes [-s^{-1}(\Gamma)]$ and $K_W+\det
\mathcal{F}'$ is ample. Consider an exceptional divisor $E\cong\mathbb{P}^{n-1}$ of $s$.
Since $n-1\geq 3$, we see that $\pi(E)$ is a point of $S'$, but this is impossible since
any fiber of $\pi$ is a linear $\mathbb{P}^{n-2}$. Therefore, case (c) cannot occur.
\end{proof}

\medskip

\begin{rem}
{\em From Theorem \ref{basic2}
we deduce that $1$ and $2$ are gap values for $\delta$. Thus apart from a short list of
triplets $(X,\mathcal E,H)$ as in Theorem \ref{basic2}, we have
$\delta\geq 6$.}
\end{rem}

\medskip

\begin{rem} \label{Rem-example}
{\em
Let us note here that case $(6)$ in Theorem \ref{basic2} is effective.
Recall that this case comes
from case $(13)$ of \cite[Theorem]{Ma}. Let $(C, o)$ be a pointed
smooth curve of genus $2$, and on the jacobian $J(C)$ of $C$
consider the Jacobian bundle $\mathcal E_r(C,o)$ of rank $r$, as
in \cite[(2.18)]{F2}. Set $X:=\mathbb
P(\mathcal E_{n-1}(C,o))$; then $X$ is a $\mathbb P^{n-2}$-bundle
over the smooth surface $J(C)$. Recall that $X$ can be identified
with $C^{(n)}$, the $n$-fold symmetric product of $C$, the bundle
projection $\pi:C^{(n)} \to J(C)$ being given by the mapping
$(x_1, \dots ,x_n) \mapsto [x_1+ \dots +x_n - no]$. Let $H$ be the
tautological line bundle on $X$; $H$ is ample. Moreover, as shown
in \cite[(2.18)]{F2} there is a section of $H$ whose zero locus is
$\mathbb P(\mathcal E_{n-2}(C,o))$, which can be identified with
$C^{(n-1)}$. By induction, we thus see that $S:=C^{(2)}$ is the
zero locus of a section of the ample vector bundle $H^{\oplus
(n-2)}$. Thus the triplet $(X, \mathcal E:= H^{\oplus (n-2)}, H)$
provides an example as in case $(6)$ of Theorem \ref{basic2}. Note
also that $\pi|_S:S \to J(C)$ is just the contraction of the
unique $(-1)$-line of $(S, H_S)$ corresponding to the canonical
$g^1_2$ of $C$. }
\end{rem}

\bigskip

\noindent To avoid long lists repeating several triplets we already met, in
the next statement, as well as in Section \ref{Sec2},  we simply
denote by

\bigskip
\medskip

\noindent $\mathcal{A}:$ \ the class consisting of the five triplets
appearing in Theorem \ref{basic1};

\bigskip
\medskip

\noindent $\mathcal{B}:$ \ the class
consisting of the first three triplets occurring in Theorem
\ref{basic2}, namely,

$(\mathbb P^n, \mathcal O_{\mathbb
P^n}(1)^{\oplus (n-2)}, \mathcal O_{\mathbb P^n}(3))$, $(\mathbb
P^n, \mathcal O_{\mathbb P^n}(2) \oplus \mathcal O_{\mathbb
P^n}(1)^{\oplus (n-3)}, \mathcal O_{\mathbb P^n}(2))$, and
$(\mathbb Q^n, \mathcal O_{\mathbb Q^n}(1)^{\oplus (n-2)},
\mathcal O_{\mathbb Q^n}(2))$.

\bigskip
\medskip

\noindent Thus by Theorems \ref{basic1} and \ref{basic2}, combined with Table
\ref{Table1} and the fact that $m_2=\delta+d\geq\delta +1$, we have the following consequence.

\bigskip

\begin{cor} \label{cor basic1}
Let $(X, \mathcal{E}, H)$ be as in \eqref{0}.
Then $$m_2 \leq 6$$ if and only if either $(X,\mathcal
E,H)\in\mathcal{A}$\ $(m_{2}=0,3,2,2,d\leq 6)$, or $(X,\mathcal
E,H)$ is as in case $(6)$ of
{\em Theorem
\ref{basic2}}\ $(m_{2}=6)$.
\end{cor}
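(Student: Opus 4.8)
The plan is to derive Corollary~\ref{cor basic1} as a direct bookkeeping consequence of the two preceding structure theorems together with Table~\ref{Table1}, without any new geometric input. The key observation that makes the argument purely combinatorial is the relation $m_2 = \delta + d$, where $\delta := m_2 - d$ and $d := H_S^2 \geq 1$ (since $H$ is ample on $S$, so $H_S$ is ample and $d = H_S^2 > 0$, in fact $d \geq 1$ as an integer). Hence $m_2 \leq 6$ immediately forces $\delta \leq 5$, and the classification of all triplets with $\delta \leq 5$ is already completely recorded in Theorems~\ref{basic1} and~\ref{basic2}. So the whole proof amounts to running through the finitely many cases listed there and checking which ones satisfy the sharper inequality $m_2 \leq 6$.

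First I would prove the ``only if'' direction. Suppose $m_2 \leq 6$. Then $\delta = m_2 - d \leq m_2 - 1 \leq 5$, so $\delta \in \{-1,0,1,2,3,4,5\}$ in principle; but by the Remark following Theorem~\ref{basic2}, the values $\delta = 1$ and $\delta = 2$ are gap values (they never occur), so in fact $\delta \in \{-1,0,3,4,5\}$. I would then treat these cases by invoking the relevant theorem:
\begin{itemize}
\item For $\delta < 0$ (i.e. $\delta = -1$) and $\delta = 0$, Theorem~\ref{basic1} lists exactly the five triplets of the class $\mathcal A$, with the recorded values $m_2 = 0, 3, 2, 2$ and $m_2 = d \leq 6$ in case~(3); each of these satisfies $m_2 \leq 6$.
\item For $\delta = 3$, Theorem~\ref{basic2} gives the single triplet $(\mathbb P^n, \mathcal O_{\mathbb P^n}(1)^{\oplus(n-2)}, \mathcal O_{\mathbb P^n}(3))$ with $m_2 = 12 > 6$, which must therefore be discarded.
\item For $\delta = 4$, the three triplets of class $\mathcal B$ and case~(3) all have $m_2 = 12, 12, 12, 7, 8$ respectively, all exceeding $6$, so none survives.
\item For $\delta = 5$, the triplets (4)--(7) of Theorem~\ref{basic2} have $m_2 = 9, 8, 6, 6$ respectively; only cases~(6) and~(7), with $m_2 = 6$, meet the bound.
\end{itemize}
This exhausts the possibilities and shows that the triplets with $m_2 \leq 6$ are precisely those of $\mathcal A$ together with cases~(6) and~(7) of Theorem~\ref{basic2}.

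The ``if'' direction is then simply the verification that each listed triplet does satisfy $m_2 \leq 6$, which is immediate from the $m_2$-values recorded in parentheses in Theorems~\ref{basic1} and~\ref{basic2}: the members of $\mathcal A$ have $m_2 \in \{0,2,3\}$ or $m_2 = d$ (and in the scroll case~(3) of Theorem~\ref{basic1} one reads off $d \leq 6$ from the relevant entries of Table~\ref{Table1}, e.g. via $m_2 = d$ and the constraint $m_2 \le 6$), while cases~(6) and~(7) have $m_2 = 6$. I do not anticipate a genuine obstacle here, since no fresh argument is required; the only point demanding care is the bookkeeping on $d$ in case~(3) of $\mathcal A$, where I must confirm via Table~\ref{Table1} that the scroll values of $d = m_2$ compatible with $m_2 \le 6$ are exactly those claimed, and the careful use of the gap-value remark to exclude $\delta = 1, 2$ so that the case list is genuinely complete.
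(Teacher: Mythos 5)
Your proposal is correct and follows exactly the paper's own route: the paper obtains this corollary in a single line from Theorems \ref{basic1} and \ref{basic2} combined with Table \ref{Table1} and the inequality $m_2=\delta+d\geq\delta+1$, which is precisely the case-by-case bookkeeping (using the gap values $\delta=1,2$ and the recorded $m_2$-values to discard everything except $\mathcal{A}$ and cases $(6)$, $(7)$) that you spell out.
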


As a consequence of Corollary \ref{cor basic1}, we have $m_2 \geq
7$ apart from a short list of triplets $(X, \mathcal E, H)$.

\medskip

\noindent Finally, in line with Corollary \ref{cor basic1}, we
show that also 7--9 are gap values for $m_2$, provided that $S$ is
not ruled.

\begin{thm} \label{thm basic3}
Let $(X, \mathcal{E}, H)$ be as in \eqref{0} and suppose
that $S$ is not a ruled surface. If $m_2 \geq 7$, then
$$m_2 \geq 10.$$ Moreover,
if equality holds, then $r=n-2$,
$X=\mathbb{P}_{\Sigma}(\mathcal{U})$, for an ample vector bundle
$\mathcal{U}$ of rank $n-1$ on a smooth minimal surface $\Sigma$,
and $\mathcal{E}=\pi^*\mathcal{G}\otimes \xi$, where $\xi$ is the
tautological line bundle on $X$, $\mathcal{G}$ is a vector bundle
of rank $n-2$ on $\Sigma$ and $\pi :X\to \Sigma$ is the bundle
projection; furthermore, $H_F=\mathcal{O}_F(3)$ for any fiber
$F\cong\mathbb{P}^{n-2}$ of $\pi$, $\pi|_S:S\to \Sigma$ is the
blowing-up of $\Sigma$ at a point $p$, $e(S)=1$ and $\Sigma$ is
either an abelian or a bielliptic surface with
$H_S=\pi|_S^*L_0-3\pi|_S^{-1}(p)$ and $L_0^2=10$.
\end{thm}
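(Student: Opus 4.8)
The plan is to follow the same three-stage strategy used throughout the section: first reduce the polarized surface $(S,H_S)$ to a short list using the numerical data, then invoke the ample vector bundle machinery to determine the structure of $X$. Since we assume $S$ is not ruled and $m_2\geq 7$, Proposition \ref{S not ruled} and Corollary \ref{cor basic1} are the natural starting points. By Proposition \ref{proposition} we have $m_2=c_2(J_1(H_S))$, so the inequality $m_2\geq 10$ is really a statement about $(S,H_S)$ refined by which pairs actually lift to the bundle setting.

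\textbf{Step 1: reducing $(S,H_S)$ via Table \ref{Table1}.} First I would inspect Table \ref{Table1} together with formula $(\#)$, namely $m-d=e(S)+4(g-1)$. Since $S$ is not ruled, every entry of the table with $\kappa(S)=-\infty$ is excluded, and I expect the only surviving non-ruled pairs with $m_2\leq 9$ to be those with $\kappa(S)\geq 0$ appearing in rows such as N.~9--12, 28, 29. For each of these I would check, using the known liftability obstructions, whether the pair can occur as the zero locus of an ample vector bundle. By \cite[Theorem]{L2} the only minimal surface of Kodaira dimension zero arising this way is a K3 surface, which rules out abelian and bielliptic surfaces as \emph{minimal} zero loci; and by \cite[Theorem]{LM4} the elliptic surfaces with multiple fibers do not lift either. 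This should eliminate all pairs with $7\leq m_2\leq 9$, establishing the gap and forcing $m_2=10$ to come from a single admissible pair.

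\textbf{Step 2: identifying the equality case.} When $m_2=10$, from $(\#)$ with $S$ not ruled I expect $e(S)\geq 0$ and small; combining $m-2d=e(S)+2g-2+K_SL$ with the table I anticipate landing on the case where $S$ is a non-minimal surface with $e(S)=1$, blown up at a single point from a minimal surface $\Sigma$ of Kodaira dimension zero, specifically abelian or bielliptic, with $L_0^2=10$ on $\Sigma$. The value $e(S)=1$ is consistent with blowing up a surface of $e=0$ once. Here the crucial point is that $\Sigma$ itself is \emph{not} the zero locus---it is the image of $S$ under a blow-down---so the obstruction from \cite[Theorem]{L2} does not apply to $\Sigma$; rather, the non-minimality of $S$ is what permits the lift.

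\textbf{Step 3: determining the structure of $X$.} Finally, once $S$ is pinned down as the blow-up $\pi|_S:S\to\Sigma$ at one point, I would argue exactly as in cases (6) and (jjj) of Theorem \ref{basic2}: since $S$ is a non-minimal surface whose minimal model is obtained by contracting a single $(-1)$-curve lying in a fiber, the results of \cite{Ma} together with \cite[Theorem A) and C)]{ABW} force $X=\mathbb{P}_\Sigma(\mathcal U)$ for an ample rank $n-1$ bundle $\mathcal U$, with $\mathcal E=\pi^*\mathcal G\otimes\xi$ and $H_F=\mathcal O_F(t)$; the computation $t=H_F\cdot\pi|_S^{-1}(p)=H_S\cdot\pi|_S^{-1}(p)$ should yield $t=3$, and the summand condition on $\mathcal F_F$ forces $r=n-2$. \textbf{The main obstacle} I expect is Step 1: rigorously confirming that \emph{every} non-ruled pair with $7\leq m_2\leq 9$ in Table \ref{Table1} fails to lift, since this requires matching each relevant row against the precise hypotheses of \cite{L2} and \cite{LM4} and checking that the multiple-fiber or minimality conditions genuinely produce the required contradiction in the vector bundle setting.
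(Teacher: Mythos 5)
Your three-stage skeleton does match the paper's strategy (reduce $(S,H_S)$ to a short list, kill survivors with the liftability obstructions of \cite{L2} and \cite{LM4}, then read off the structure of $X$), and your Step 3 conclusion is essentially right, though the paper gets it more directly: since $\kappa(S)=0$, \cite[Theorem]{L2} immediately yields the $\mathbb{P}^{n-2}$-bundle structure $\pi:X\to\Sigma$, after which $t=H_S\cdot\pi|_S^{-1}(p)=3$ and the summand argument give $H_F=\mathcal{O}_F(3)$ and $r=n-2$, with no need for \cite{Ma} or \cite{ABW}. However, Steps 1 and 2 both have genuine gaps. In Step 1, since the hypothesis $m_2\geq 7$ already excludes N.~9--12, 14, 15 of Table \ref{Table1} (all have $m_2\leq 6$) and N.~28 has $m_2=12$, the \emph{only} non-ruled entry with $7\leq m_2\leq 9$ is N.~29 with $d=1$: a minimal elliptic surface with $\chi(\mathcal{O}_S)=0$, $g=3$, $H_S^2=1$, giving $m_2=9$. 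The table does \emph{not} assert that this fibration has multiple fibres, so your blanket appeal to ``elliptic surfaces with multiple fibres do not lift'' does not apply as stated; you flag exactly this as your main obstacle but leave it unresolved. The missing idea is a parity argument: by \cite[Theorem (b)]{LM4} the elliptic fibration is induced by a morphism $\varphi:X\to B$ and $\varphi|_S$ has \emph{no} multiple fibres, whence the canonical bundle formula together with $\chi(\mathcal{O}_S)=0$ gives $K_S\equiv (2g(B)-2)F$, so $K_SH_S$ is even; but $K_SH_S=2g-2-H_S^2=3$, a contradiction. Without this step the gap $7\leq m_2\leq 9$ is not established.

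The more serious problem is Step 2: it cannot be carried out ``with the table,'' because Table \ref{Table1} only covers $m-d\leq 8$ (and $m\leq 9$), while the configuration that actually realizes $m_2=10$ has $H_S^2=1$ and hence $m_2-d=9$, outside the table's range. So the table is structurally unable to produce the equality case, and the entire classification for $m_2=10$ must be done from scratch; this is the bulk of the paper's proof. Concretely: from \eqref{formula} one gets $g\in\{2,3\}$ and $e(S)\leq 5$; then $\kappa(S)=2$ is excluded via \cite[Proposition 2]{AP}, \cite[Theorem 1.4]{BLP} and Noether's formula; $\kappa(S)=1$ is excluded through several subcases using \cite[Lemma 1.1, Lemma 1.3, Theorem 1.5]{BLP}, the Hodge inequality, \cite[Theorem]{LM4} again, and---for $(e(S),H_S^2)=(0,2)$---a delicate ad hoc argument producing an irreducible curve $C\in |H_S+F|$ whose genus is incompatible with $g(S,C)=4$; finally, for $\kappa(S)=0$, the case $g=2$ is excluded by \cite[Theorem 2.7, Proposition 2.1]{BLP}, and for $g=3$ the case $(e(S),H_S^2)=(0,2)$ is excluded by \cite[Theorem]{L2}, leaving only $(e(S),H_S^2)=(1,1)$, i.e.\ the blow-up of a minimal abelian or bielliptic surface at one point, where the genus formula forces $H_S=\sigma^*L_0-3\sigma^{-1}(p)$ and $L_0^2=10$. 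None of this case analysis appears in your proposal; ``I anticipate landing on the case where\dots'' is precisely the part that has to be proved.
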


\begin{proof}
Since $m_2 \geq 7$ and $S$ is not ruled, we see from Table \ref{Table1}
(where $m$ is now our $m_2$) that $m_2\geq 9$ and equality
implies that $S$ is a minimal elliptic surface, $\chi
(\mathcal{O}_S)=0$, $g=3$ and $H_S^2=1$. In this case, it
follows from \cite[Theorem(b)]{LM4} that $X$ is endowed with a
morphism $\varphi :X\to B$ onto a smooth curve $B$ inducing on $S$
the elliptic fibration and $f:=\varphi|_S:S\to B$ has no multiple
fibres. Thus by \cite[(12.1) and (12.2) in Chapter V, pp. 161--162]{BPV} we
deduce that $K_S=f^*(K_B)\equiv (2g(B)-2)F$ since $\deg
f_{*1}(\mathcal{O}_S)^{\vee}=\chi (\mathcal{O}_S)=0$, where $F$ is
a fiber of $f$, but this gives the numerical contradiction
$$2(g(B)-1)FH_S=K_SH_S=2g-2-H_S^2=4-1=3.$$

Suppose now that $m_2=10$. Then by $(\#)$ we have
$10=e(S)+4(g-1)+H_S^2\geq 4(g-1)+1$, i.e $g=2,3$ and $e(S)\leq 5$.
Assume that $\kappa(S)=2$. Then by \cite[Proposition 2]{AP} we
deduce that $g\leq 2$, hence
$g=2$. From \cite[Theorem 1.4]{BLP} we have $K_S\equiv L$ and $K_S^2=1,
q=0, p_g=0,1,2$. Thus we get
$$10=e(S)+4(g-1)+H_S^2=12\chi
(\mathcal{O}_S)-K_S^2+4+H_S^2=12(1+p_g)+3+H_S^2\geq 16,$$ but this
is a contradiction. Hence $\kappa(S)=0$ or $1$.

Suppose that $\kappa(S)=1$. If $g=2$, then by \cite[Lemma
1.1]{BLP} we know that $K_SH_S=H_S^2=1$ and $S$ is minimal.
Moreover, we have either (i) $\chi (\mathcal{O}_S)=0$ or (ii)
$q=p_g(S)=0$. By the Hodge inequality, we see that
$K_S^2=K_S^2H_S^2\leq (K_SH_S)^2=1$. Thus in (ii) we get
$12=12(1-q+p_g)=e(S)+K_S^2\leq 5+1=6$, a contradiction. In case
(i), by \cite[Lemma 1.3, Theorem 1.5]{BLP} we have $q=1,
p_g(S)=0$ and $S\to\mathbb{P}^1$ is an elliptic fibration with
multiple fibers, but this is in contradiction with \cite{LM4}.

If $g=3$ then $2=e(S)+H_S^2$, i.e. either
\begin{enumerate}
\item[(j)] $(e(S),H_S^2)=(0,2)$, or
\item[(jj)] $(e(S),H_S^2)=(1,1)$.
\end{enumerate}

In case (j), we see that $S$ is minimal
since $e(S)=0$. Let $f:S\to B$ be the elliptic fibration over a
smooth curve $B$. Then from \cite[Lemma VI.4]{B1} we deduce that
$f$ has no singular fiber. Furthermore, by \cite[(12.1) and
(12.2) in Chapter V, pp.\ 161--162]{BPV} we obtain that $K_S=f^*(K_B)\equiv
(2g(B)-2)F$ since $\deg f_{*1}(\mathcal{O}_S)^{\vee}=\chi
(\mathcal{O}_S)=0$. Hence $2=K_SH_S=(2g(B)-2)FH_S$, i.e. $g(B)=2$
and $FH_S=1$. This implies that $h^0(H_S)=0$, since
$g=3$. Thus the Riemann--Roch theorem shows that
$h^1(H_S)=h^0(H_S)=0$ and by the
exact sequence
$$0\to H_S \to H_S+F \to (H_S)_F \to 0,$$ we obtain that
$h^0(H_S+F)=h^0((H_S)_F)=1$. Let $C\in
|H_S+F|$. Note that $CF=(H_S+F)F=H_SF=1$ and $g(S,C)=4$. This
shows that $C$ is a reduced divisor. Moreover, observe that $C$ is also
irreducible, since otherwise $C-F=H_S$ would be effective. Thus from
$CF=1$ and $g(B)=2$, it follows that $g(S,C)=g(C)=2$, a contradiction.

In case (jj), note that $\sigma :S\to S_0$ is the blowing-up of a
minimal elliptic surface $S_0$ at a point $p$ with $\chi
(\mathcal{O}_{S_0})= \chi (\mathcal{O}_{S})=0$. This implies that
$K_{S_0}\equiv (2g(B)-2)F$, where $f:S_0\to B$ is the elliptic
fibration over a smooth curve $B$ and $F$ is a fiber of $f$. Note
that $S_0$ has no multiple fiber, otherwise $S$ itself would
contain multiple fibers, which is impossible in view of \cite[Theorem]{LM4}. Write $H_S=\sigma^*H_0-aE$, where $a$ is
a positive integer and $E$ is the exceptional curve of $\sigma$.
Thus
$$(2g(B)-2)FH_0+a=K_{S_0}H_0+a=K_SH_S=2g-2-H_S^2=4-H_S^2,$$
i.e.
\begin{equation} \label{new_display}
(2g(B)-2)FH_0+a=3.
\end{equation}
Observe that $1\leq
(\sigma^*F'-E)H_S=H_0F'-a=H_0F-a$, i.e. $a\leq H_0F-1$, where $F'$
is the fiber of $f$ passing through $p$. This shows that $H_0F\geq
2$ and then $g(B)=1$ by \eqref{new_display}. Hence $K_{S_0}\equiv\mathcal{O}_{S_0}$, but
this is impossible since
$\kappa(S)=1$.

Finally, assume that $\kappa(S)=0$. If $g=2$, then $6=e(S)+H_S^2$
and by \cite[Theorem 2.7]{BLP} we know that $K_S^2=0,-1$. Thus
$12\chi (\mathcal{O}_S)=e(S)+K_S^2\leq 5$, i.e. $\chi
(\mathcal{O}_S)=0$ and $e(S)=0,1$. So we get $H_S^2=6-e(S)=5,6$,
but this contradicts \cite[Proposition 2.1]{BLP}. Hence $g=3$ and
$2=e(S)+H_S^2$. This implies that $(e(S), H_S^2)$ is either (I)
$(0,2)$ or (II) $(1,1)$. In case (I) we see that $S$ is minimal with $\chi
(\mathcal{O}_S)=0$,
but this contradicts \cite[Theorem]{L2}. In (II), we deduce that
$S$ is the blowing-up $\sigma :S\to S_0$ of an abelian or a
bielliptic surface $S_0$ at a point $p$. Write
$H_S=\sigma^*H_0-a\sigma^{-1}(p)$ for some positive integer $a$.
Thus
$3=K_SH_S=(\sigma^*K_{S_0}+\sigma^{-1}(p))(\sigma^*H_0-a\sigma^{-1}(p))=a$,
i.e. $H_S=\sigma^*H_0-3\sigma^{-1}(p)$. By \cite[Theorem]{L2} we know that
$\pi : X\to S_0$ is a $\mathbb{P}^{n-2}$-bundle over $S_0$ and $\mathcal F_F
=\mathcal O_F(1)^{\oplus (n-2)}$ for any fiber $F\cong\mathbb{P}^{n-2}$ of
$\pi$. But in our case $H_F=\mathcal{O}_F(3)$ and then $r=n-2$
because $\mathcal F = \mathcal E \oplus H^{\oplus (n-r-2)}$.
\end{proof}

As a consequence of Corollary \ref{cor basic1} and Theorem
\ref{thm basic3}, when $S$ is not a ruled surface, we conclude
that $m_2 \geq 11$ apart from a short list of triplets.

\vskip0.5cm

\section{Lower bounds for $\delta$ in terms of $g$}\label{Sec1bis}

In this section, we will compare $\delta=m_2-d$ with the sectional
genus $g$ of the polarized surface $(S,H_S)$. A first result is
given by the following

\begin{prop}\label{2q}
Let $(X, \mathcal{E}, H)$ be as in \eqref{0}.
Then $$\delta\geq 2g$$ and equality holds if and only if $r=n-2$,
$X$ is a $\mathbb{P}^{n-1}$-bundle over a smooth curve $B$ and
$\mathcal{E}_F\cong\mathcal{O}_F(1)^{\oplus (n-2)}$ for every fiber
$F\cong\mathbb{P}^{n-1}$ of the bundle projection $\pi :X\to B$.
In particular, $S=S_{q,e}$ is a $\mathbb P^1$-bundle over $B$ via
$\pi|_S$. Moreover, either
\begin{enumerate}
\item[(i)] $H_F=\mathcal{O}_F(3)$, $q=1$, $e=-1$, $g=2$ and
$H_S\equiv [3C_0-f]$, or \item[(ii)] $H_F=\mathcal{O}_F(2)$,
$g=2q>0$ and $H_S\equiv [2C_0+(e+1)f]$ with $-q\leq e\leq
0$.
\end{enumerate}
\end{prop}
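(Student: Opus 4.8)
The plan is to transfer the whole problem to the polarized surface $(S,H_S)$. By Proposition \ref{proposition} we have $m_2=c_2(J_1(H_S))$, so $\delta=m_2-d$ is exactly the quantity $m-d$ attached to $(S,H_S)$, and the inequality $\delta\geq 2g$ is then \cite[Proposition 3.2]{PTu} (the only surfaces where it could fail being $(\mathbb P^2,\mathcal O(e))$, which have $g=0$ and are the two exceptional triplets already isolated in Theorem \ref{basic1}). For the equality case I would record, from $(\#)$, that $\delta=e(S)+4(g-1)$; hence $\delta=2g$ is equivalent to $e(S)=4-2g$ and, using $2g-2=K_SH_S+d$, to $e(S)+K_SH_S+d=2$.

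The first step is to prove that $S$ is ruled. If it were not, Proposition \ref{S not ruled} would give $\delta\geq 2g+d>2g$ (impossible, since $d\geq 1$), unless $S$ is a minimal abelian or bielliptic surface with $\delta=2g+d-2$ (forcing $d=2$), or a minimal elliptic quasi-bundle carrying multiple fibres with $\delta=2g+d-1$ (forcing $d=1$). The former is ruled out by \cite[Theorem]{L2}, since the only Kodaira dimension zero minimal surface occurring as the zero locus of an ample vector bundle is a K3 surface; the latter is ruled out by \cite[Theorem]{LM4}, since multiple fibres do not lift to the vector bundle setting. Thus $S$ is ruled.

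Next I would pin down $(S,H_S)$ among ruled surfaces. If $(S,H_S)$ is a scroll then $g=q$ and $\delta=0$, so equality forces $q=0$: a rational scroll, already described by the $\delta=0$ analysis. Otherwise $(S,H_S)$ is not a scroll, and then $q>0$, since a rational ruled surface has $e(S)\geq 3$, compatible with $e(S)=4-2g$ only when $g=0$ (a Hirzebruch surface carrying, necessarily, a scroll polarization). A non-minimal $S$ contains a $(-1)$-curve which, by \cite{Ma}, would lift to an exceptional divisor $E\cong\mathbb P^{n-1}$ on $X$ of the type occurring in Lemma \ref{lemma 1}; as $S$ is ruled with $q>0$, this contradicts Lemma \ref{lemma 1}. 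Hence $S=S_{q,e}$ is geometrically ruled, $e(S)=4(1-q)$, and $e(S)=4-2g$ yields $g=2q$. Writing $H_S\equiv aC_0+bf$ with $a=H_Sf\geq 2$, the sectional genus formula together with $g=2q$ gives $(a-1)(2b-ae)=2(2q+a-aq-1)$; imposing ampleness ($2b-ae>0$) and $a\geq 2$ forces either $a=2$, $b=e+1$ (so $H_S\equiv 2C_0+(e+1)f$, $d=4$, for every $q\geq 1$: case (ii)), or $a=3$, $q=1$, $e=-1$, $b=-1$ (so $H_S\equiv 3C_0-f$, $d=3$: case (i)). For small $g$ this also follows by inspection of Table \ref{Table1}.

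Finally, since $S=S_{q,e}$ is a $\mathbb P^1$-bundle over $B$ of positive genus $q$, Lemma \ref{lemma 2} (resting on \cite[Theorem]{LM2}) shows that $X$ is a $\mathbb P^{n-1}$-bundle $\pi:X\to B$ with $\mathcal F_F=\mathcal O_{\mathbb P^{n-1}}(1)^{\oplus(n-2)}$ and $H_F=\mathcal O_{\mathbb P^{n-1}}(t)$, $t=H_Sf=a$; as $(S,H_S)$ is not a scroll, the same lemma forces $r=n-2$, with $t=2$ in case (ii) and $t=3$ in case (i), and $\pi|_S$ induces the ruling. Conversely, for a triplet with this bundle structure and $H_S$ as in (i) or (ii), the converse part of Lemma \ref{lemma 2} gives $S=S_{q,e}$, and $g=2q$ yields $\delta=e(S)+4(g-1)=4(1-q)+4(2q-1)=2g$. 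The step I expect to be the main obstacle is the exclusion of non-minimal ruled surfaces: at the level of $(S,H_S)$ alone such surfaces can satisfy $\delta=2g$, so it is essential to exploit the ambient vector bundle structure, through \cite{Ma} and Lemma \ref{lemma 1}, both to discard them and to secure the $\mathbb P^{n-1}$-bundle description of $X$.
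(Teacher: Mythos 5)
Your reduction to $(S,H_S)$, your treatment of the non-ruled case via Proposition \ref{S not ruled} plus \cite[Theorem]{L2} and \cite[Theorem]{LM4}, your genus-formula analysis on $S_{q,e}$, and your lifting step via Lemma \ref{lemma 2} are all sound, and your care with the borderline cases ($(\mathbb P^2,\mathcal O_{\mathbb P^2}(e))$ and rational scrolls, which the statement itself passes over in silence) is a point in your favor. But there is a genuine gap exactly at the step you flag as the main obstacle: the exclusion of non-minimal ruled surfaces. You assert that a $(-1)$-curve on $S$ ``would lift, by \cite{Ma}, to an exceptional divisor $E\cong\mathbb P^{n-1}$ on $X$ of the type occurring in Lemma \ref{lemma 1}.'' That is not what \cite{Ma} provides. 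Maeda's theorem, applied when $K_X+\det\mathcal F$ is not nef (which is all that non-minimality of the ruled surface $S$ gives you), yields a \emph{list} of possible structures for $(X,\mathcal F)$, of which the exceptional divisor is only one; the others include $\mathbb P^{n-1}$-bundles over curves with $\mathcal F_F\cong\mathcal O(2)\oplus\mathcal O(1)^{\oplus(n-3)}$, quadric fibrations over curves, and $\mathbb P^{n-2}$-bundles over surfaces. A non-minimal ruled $S$ with $q>0$ is perfectly compatible with these non-exceptional structures: the paper's own Theorem \ref{basic2}, cases (4), (5) and (7), exhibits triplets in which $S$ is the blow-up at one point of $S_{1,e}$ (so ruled, non-minimal, $q=1>0$) while $X$ is a $\mathbb P^{n-1}$-bundle over an elliptic curve, a quadric fibration over an elliptic curve, or a $\mathbb P^{n-2}$-bundle over a surface, and contains no exceptional divisor at all. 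So the implication ``non-minimal $\Rightarrow$ situation of Lemma \ref{lemma 1} $\Rightarrow$ contradiction'' fails, and with it your proof that $S$ must be geometrically ruled.

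The paper avoids this issue entirely: it cites \cite[Proposition 3.2]{PTu} not only for the inequality $\delta\geq 2g$ but for the complete classification of the equality case, which consists of the two geometrically ruled pairs (your (i) and (ii)) together with two \emph{minimal non-ruled} cases (then killed by \cite{L2} and \cite{LM4}). In particular, non-minimal ruled surfaces are already excluded at the level of the polarized surface $(S,H_S)$ --- so your closing assertion that such surfaces can satisfy $\delta=2g$ and that the ambient vector bundle structure is essential to discard them is mistaken, and in any event your vector-bundle argument for discarding them does not work. To repair the proof you would have to do one of the following: quote the equality classification of \cite{PTu} in full (which collapses your route into the paper's); or run through \emph{all} the cases of Maeda's list, showing by a numerical analysis (in the spirit of the $\delta=5$ discussion in Theorem \ref{basic2}) that the conic-fibration and $\mathbb P^{n-2}$-bundle structures are incompatible with $\delta=2g$; or prove the surface-level exclusion directly by a parity/genus computation on blow-ups of $S_{q,e}$, as in Theorem \ref{prop 2g+1, 2g+2} --- noting, however, that that theorem assumes a smooth curve in $|H_S|$, an assumption you do not have here.
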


\begin{proof} By \cite[Proposition 3.2]{PTu}, we know that $\delta\geq 2g$, equality holding
if and only if one of the following cases occurs:
\begin{enumerate}
\item $S=S_{1,-1}$, $g=2$ and $H_S\equiv [3C_0-f]$; \item
$S=S_{q,e}$ with $-q\leq e\leq 0$, $g=2q>0$ and $H_S\equiv
[2C_0+(e+1)f]$; \item $S$ is a minimal surface endowed with an
elliptic fibration $S\to\mathbb{P}^1$, $q=1, p_g=0, g=2$ and
$H_S^2=1$; \item $S$ is a minimal and not ruled surface with
$g=2$.
\end{enumerate}
Note that in case (3), from \cite[Theorem 1.5]{BLP} it follows
that $S$ has multiple fibers, but this contradicts
\cite[Theorem]{LM4}. Moreover, also case (4) cannot occur by
\cite[Theorem]{L2} since $S$ is minimal and not a K3 surface.
Finally, by \cite[Theorem]{LM2} in cases (1) and (2) we conclude
that $X$ is a $\mathbb{P}^{n-1}$-bundle over a smooth curve $B$
and $\mathcal{F}_F\cong\mathcal{O}_F(1)^{\oplus (n-2)}$ for every
fiber $F\cong\mathbb{P}^{n-1}$ of the bundle projection $\pi :X\to
B$. Note that $F_S=f$ for any fiber $F$ of $\pi$ and that the
restriction $\pi|_S :S\to B$ of $\pi$ to $S$ gives the bundle
projection on $S$. Moreover, we have $H_F=\mathcal{O}_F(b)$ with
$b=3,2$ according to cases (1) and (2) respectively. This shows
that necessarily $r=n-2$.
\end{proof}

\noindent Now, we lift the results of Theorem \ref{prop 2g+1, 2g+2} and Proposition \ref{S not ruled} to the ample vector bundle setting.

\medskip

As a consequence of Theorem \ref{prop 2g+1, 2g+2}, we can obtain
the following

\begin{thm}\label{2g+1, 2g+2}
Let $(X, \mathcal{E}, H)$ be as in \eqref{0}. Moreover,
assume that there exists a smooth curve in $|H_S|$.
\begin{itemize}
\item[(A)] If $\delta = 2g+1$, then either $(\delta,g)=(3,1),(5,2)$ and the triplets $(X,\mathcal{E},H)$ fit into
all the possibilities of {\em Theorem
\ref{basic2}} for $\delta=3$ and $5$, or $(X, \mathcal E, H)$ is one of
the following triplets:
\begin{enumerate}
\item[(i)] $X\cong\mathbb{P}_C(\mathcal{V})$, where $\mathcal{V}$
is a vector bundle of rank $n$ on a smooth curve $C$,
$\mathcal{E}_F\cong\mathcal{O}_{F}(2)\oplus\mathcal{O}_F(1)^{\oplus
(r-1)}$ and $H_F=\mathcal{O}_{F}(1)$, for any fiber
$F\cong\mathbb{P}^{n-1}$ of the bundle projection $X \to C$;
\item[(ii)] there is a
surjective morphism $X\to \Gamma$ onto a smooth curve $\Gamma$
whose general fiber $F$ is a smooth
quadric hypersurface of $\mathbb{P}^n$
such that $\mathcal{E}_F\cong\mathcal{O}_F(1)^{\oplus r}$
and $H_F=\mathcal{O}_F(1)$.
\end{enumerate}
Moreover, in both cases, $S$ is the blowing-up $\sigma :S\to
S_{q,e}$
of a surface $S_{q,e}$ at a point $p$, $H_S\equiv [\sigma^*(2C_0+(e+1)f)-\sigma^{-1}(p)]$ and $g=2q\geq 4$.\\
\item[(B)] If $\delta = 2g+2$ and $g\geq 4$, then we have the following possibilities:
\begin{enumerate}
\item[(B1)] $r=n-2$, $X$ is a
$\mathbb{P}^{n-1}$-bundle over a smooth curve $B$ and
$\mathcal{E}_F\cong\mathcal{O}_F(1)^{\oplus (n-2)}$ for every fiber
$F\cong\mathbb{P}^{n-1}$ of the bundle projection $\pi :X\to B$.
Moreover, either
\begin{enumerate}
\item[(j)] $H_F=\mathcal{O}_F(3)$, $S=S_{2,-1}$, $g=5$, and $H_S\equiv [3C_0-f]$, or \item[(jj)]
$H_F=\mathcal{O}_F(2)$, $S=S_{q,e}$ with $q\geq 2$, $-q\leq e\leq 0$, $g=2q+1$ and
$H_S\equiv [2C_0+(e+2)f]$;
\end{enumerate}
\item[(B2)] $(X,\mathcal{E},H)$ is as in {\em (i)} and {\em (ii)} of {\em (A)}, and in both cases, $S$ is the blowing-up $\sigma :S\to
S_{q,e}$
of a surface $S_{q,e}$ at two points $p_1,p_2$, lying on distinct fibers, $H_S\equiv [\sigma^*(2C_0+(e+1)f)-\sigma^{-1}(p_1)-\sigma^{-1}(p_2)]$ and $g=2q$.
\end{enumerate}
\end{itemize}
\end{thm}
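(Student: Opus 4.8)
The plan is to lift the surface-level classification in Theorem~\ref{prop 2g+1, 2g+2} to the vector bundle setting, exactly mirroring the strategy already used in the proofs of Theorems~\ref{basic1} and \ref{basic2} and Proposition~\ref{2q}. Since $(X,\mathcal E,H)$ is as in \eqref{0}, Proposition~\ref{proposition} identifies $m_2$ with $c_2(J_1(H_S))$, so $\delta = m_2 - d = c_2(J_1(H_S)) - H_S^2$ is precisely the quantity $m - d$ for the polarized surface $(S,H_S)$. Because $\mathcal F$ has a section vanishing on $S$, the line bundle $H_S$ is ample and, crucially, we have assumed $|H_S|$ contains a smooth curve. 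This means Theorem~\ref{prop 2g+1, 2g+2} applies verbatim and reduces the possibilities for $(S,H_S)$ to an explicit short list. So the first step is simply to invoke that theorem to pin down $(S,H_S)$.

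For part (A), the condition $\delta = 2g+1$ places $(S,H_S)$ into either the low-genus cases $(\delta,g)=(3,1),(5,2)$ already covered by Theorem~\ref{basic2}, or into case $(\beta)$ of Theorem~\ref{prop 2g+1, 2g+2}: $g = 2q \geq 4$ and $S$ is the blow-up $\sigma:S\to S_{q,e}$ at one point with $H_S \equiv [\sigma^*(2C_0+(e+1)f) - \sigma^{-1}(p)]$. The key observation is that in this range $q \geq 2$, so $S$ is an irrational (non-minimal) $\mathbb P^1$-bundle blown up at a point. I would then apply \cite{Ma} together with Lemmas~\ref{lemma 1} and \ref{lemma 2} to determine the structure of $(X,\mathcal F)$; this is the same machinery that produced cases (j), (jj), (jjj) in the proof of Theorem~\ref{basic2}. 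The analysis should yield that $X$ fibers over the curve $C$ with fibers either projective spaces $\mathbb P^{n-1}$ (giving case (i), via $\mathcal F_F \cong \mathcal O_F(2)\oplus\mathcal O_F(1)^{\oplus(n-3)}$) or quadrics (giving case (ii)). To separate the two and to rule out the surface-blow-up type (jjj), I would use the parity/divisibility computation $H_S\cdot\sigma^*f = H_F\cdot c_{n-2}(\mathcal F_F)$ as in Theorem~\ref{basic2}: here $H_S\cdot\sigma^*f = 2$ forces $t=1$ and $H_F=\mathcal O_F(1)$, which is consistent only with the two bundle-type cases, and \cite{dF} finishes the identification of $\mathcal E$.

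For part (B), where $\delta = 2g+2$ with $g \geq 4$, Theorem~\ref{prop 2g+1, 2g+2} leaves only case $(\delta)$: either $(\delta_1)$ with $S = S_{q,e}$ a geometrically ruled surface over a curve of genus $q\geq 2$, $H_S\equiv[2C_0+(e+2)f]$, $d=4$; or $(\delta_2)$ with $S = S_{2,-1}$, $H_S\equiv[3C_0-f]$, $d=3$. Both are genuine (unblown) $\mathbb P^1$-bundles over an irrational base, so Lemma~\ref{lemma 2} applies directly and with less case-work than in (A): it gives that $X$ is a $\mathbb P^{n-1}$-bundle over $B$ with $\mathcal F_F = \mathcal O_{\mathbb P^{n-1}}(1)^{\oplus(n-2)}$, $\pi|_S$ realizing the ruling. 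The final step is to read off $H_F$ from the equality $H_S f = t$ (with $f$ a fiber line): in $(\delta_2)$ the multiplicity $H_S\cdot f = 3$ forces $H_F=\mathcal O_F(3)$, giving (j), while in $(\delta_1)$ we get $H_S\cdot f = 2$, hence $H_F=\mathcal O_F(2)$, giving (jj); in each case the argument used for (3) of Theorem~\ref{basic2} shows $r<n-2$ is impossible because $H_F$ would have to be a summand $\mathcal O_F(1)$, contradicting $H_S f \geq 2$, so $r=n-2$.

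The main obstacle I anticipate is part (A), specifically the elimination of the $\mathbb P^{n-2}$-bundle-over-a-surface case (jjj) of Ma's theorem and the clean separation between the scroll-over-$\mathbb P^{n-1}$ and quadric-fibration structures. The difficulty is that, unlike the genuine $\mathbb P^1$-bundles of part (B), here $S$ is a one-point blow-up, so one cannot apply Lemma~\ref{lemma 2} directly and must instead track the exceptional divisor and verify that its image under the relevant contraction behaves as required; the parity computation $H_S\cdot\sigma^*f = 2t$ is what ultimately forces $t=1$ and excludes the surface-base case, but assembling this rigorously while keeping the structure of $\mathcal E$ (as opposed to just $\mathcal F$) under control is the delicate part, and I would rely heavily on \cite{dF} and the lemmas to complete it.
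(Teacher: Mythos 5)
Your proposal follows the paper's own proof essentially step for step: reduce to the surface classification of Theorem \ref{prop 2g+1, 2g+2} (via Proposition \ref{proposition} and the ampleness of $H_S$), then lift part (A) with the machinery of Theorem \ref{basic2} (Maeda's theorem together with Lemmas \ref{lemma 1} and \ref{lemma 2}, then \cite{dF}), and lift part (B) by the argument of Proposition \ref{2q}/Lemma \ref{lemma 2}, reading $H_F$ off from $t=H_S\cdot f\in\{2,3\}$ and concluding $r=n-2$ because otherwise $H_F=\mathcal O_{\mathbb P^{n-1}}(1)$ would be a summand of $\mathcal F_F$. Part (B) as you wrote it is complete and matches the paper.

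One mechanism in part (A) is misattributed, although the conclusion survives. The parity identity $H_S\cdot\sigma^*f=2t$ only makes sense in cases (j) and (jj), where a fiber $F$ of the fibration $X\to C$ satisfies $F_S=\sigma^*f$; it cannot rule out case (jjj). Indeed, in (jjj) the fibers $F\cong\mathbb P^{n-2}$ of $\pi:X\to\Sigma$ meet $S$ in a single point for general $F$ (one has $S\cdot F=c_{n-2}(\mathcal F_F)=1$, consistent with $\pi|_S$ being a blow-down rather than the ruling), and the only numerical constraint available is $t=H_F\cdot\sigma^{-1}(p)=H_S\cdot\sigma^{-1}(p)=1$, which yields no contradiction at all. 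Case (jjj) must instead be excluded by de Fernex's theorem itself: since $(S,H_S)$ is a conic fibration over a curve, \cite{dF} classifies the admissible $(X,\mathcal F)$, and a $\mathbb P^{n-2}$-bundle over a surface is not among them. This is exactly how the paper disposes of case (d) under (jjj) in the proof of Theorem \ref{basic2}, which its proof of part (A) invokes by reference. Since you do cite \cite{dF} as the result you would ``rely heavily on'' to complete the argument, this is a misidentification of which tool does the work at that step rather than a missing ingredient, but as literally written that step of your proof would fail.
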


\begin{proof}

(A) If $g\leq 3$ then $\delta=2g+1\leq 7$ and from Table
\ref{Table1} we conclude that $(\delta,g)=(3,1),(5,2)$,
since $(7,2)$, which corresponds to N. 20,
does not satisfy the current assumption, i.e. the
triplets $(X,\mathcal{E},H)$ fit into all the possibilities of
Theorem \ref{basic2} for $\delta=3$ and $5$. So we can assume $g\geq 4$. By Theorem
\ref{prop 2g+1, 2g+2} we deduce that $g=2q\geq 4$, $S$ is the
blowing-up $\sigma :S\to S_{q,e}$ of a surface $S_{q,e}$ at a
point $p$ and $H_S\equiv [\sigma^*(2C_0+(e+1)f)-\sigma^{-1}(p)]$. Having
in mind \cite[Theorem]{dF} and by arguing as in the proof of
Theorem \ref{basic2}, we can easily deduce cases (i) and (ii) of
the statement.

(B) Since $\delta =2g+2$, from Theorem \ref{prop 2g+1, 2g+2} it
follows that $(S,H_S)$ is one of the following three polarized
surfaces:
\begin{enumerate}
\item[(a)] $S=S_{2,-1}$, $g=5$, $H_S\equiv [3C_0-f]$ and $H_S^2=3$;
\item[(b)] $S=S_{q,e}$ with $q\geq 2$, $g=2q+1$, $H_S\equiv
[2C_0+(e+2)f]$ and $H_S^2=8$;
\item[(c)] $S$ is the blowing-up $\sigma :S \to
S_{q,e}$
of $S_{q,e}$ at two points $p_1,p_2$, lying on distinct fibers, $H_S\equiv [\sigma^*(2C_0+(e+1)f)-\sigma^{-1}(p_1)-\sigma^{-1}(p_2)]$, $g=2q\geq 4$
and $H_S^2=2$.
\end{enumerate}
If $(S,H_S)$ is as in (a) and (b), then by arguing as in cases (1) and (2) of the proof of Proposition \ref{2q}, we obtain (B1) in the statement.
In case (c), recalling \cite[Theorem]{dF} and reasoning as in the proof of Theorem \ref{basic2}, we get (B2) in the statement.
\end{proof}

\begin{thm}\label{2g+d}
Let $(X, \mathcal{E}, H)$ be as in \eqref{0}
and suppose that $S$ is not a ruled surface. Then $\delta\geq
2g+d$.
\end{thm}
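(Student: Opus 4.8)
The statement to prove is that, under the basic setting~\eqref{0} with $S$ not ruled, we have $\delta \geq 2g+d$. Since Proposition~\ref{proposition} identifies $m_2$ with $c_2(J_1(H_S))$ and $d=H_S^2$, this is a statement purely about the polarized surface $(S,H_S)$, namely $m-d \geq 2g(S,H_S)+d$ with $m:=c_2(J_1(H_S))$. The crucial observation is that this is \emph{exactly} the default conclusion of Proposition~\ref{S not ruled}, whose inequality $m-d\geq 2g(S,L)+d$ holds for every non-ruled $S$ except the two families listed there (case~(1): abelian or bielliptic, with $m-d=2g+d-2$; case~(2): an elliptic quasi-bundle with multiple fibers, with $m-d=2g+d-1$). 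So the entire task reduces to showing that \emph{neither exceptional family of Proposition~\ref{S not ruled} can actually occur as the vanishing locus surface $S$ in the setting~\eqref{0}}. This is precisely the ``does it lift to the vector bundle setting?'' mechanism used repeatedly in Theorems~\ref{basic2} and~\ref{thm basic3}.

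\textbf{Ruling out the exceptional surfaces.} First I would handle case~(2) of Proposition~\ref{S not ruled}: there $S$ carries an elliptic fibration $f:S\to B$ over a curve of genus $\leq 1$ whose only singular fibers are \emph{multiple} fibers $m_iF_i$. But by~\cite[Theorem]{LM4} (the same citation invoked in the proofs of Theorem~\ref{basic2}, case~i), and Proposition~\ref{2q}, case~(3)), an elliptic surface arising as the zero locus of a section of an ample vector bundle cannot carry multiple fibers; hence case~(2) is immediately excluded. Next, for case~(1), $S$ is an abelian or bielliptic surface, and in particular $S$ is minimal with Kodaira dimension zero. By~\cite[Theorem]{L2}, the only minimal surface of Kodaira dimension zero that occurs as the zero locus of an ample vector bundle (in the sense of~\eqref{0}) is a K3 surface; an abelian or bielliptic surface has irregularity $q>0$ and trivial canonical bundle without being K3, so this case also fails to lift. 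Both exclusions are verbatim the arguments already deployed in the proof of Theorem~\ref{basic2}.

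\textbf{Conclusion and the main obstacle.} With both exceptional families of Proposition~\ref{S not ruled} eliminated, the only surviving alternative in that proposition is the generic bound $\delta=m-d \geq 2g(S,H_S)+d=2g+d$, which is the assertion. I would write this as: apply Proposition~\ref{S not ruled} to $(S,H_S)$; if the conclusion $\delta\geq 2g+d$ fails, then $S$ is one of the two exceptional surfaces, each of which is ruled out by~\cite{LM4} and~\cite{L2} respectively, a contradiction. The \emph{main obstacle}, such as it is, lies not in the inequality itself (which is handed to us by Proposition~\ref{S not ruled}) but in being confident that the two exclusion lemmas apply cleanly here. In particular one must check that in the present setting~\eqref{0} the surface $S$ genuinely is a zero locus of an ample vector bundle of the type to which~\cite{L2} and~\cite{LM4} apply, and that the ``quasi-bundle with multiple fibers'' hypothesis of Proposition~\ref{S not ruled}(2) really does force the existence of at least one multiple fiber (so that \cite{LM4} bites). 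Both points are routine given how these citations were used earlier in the paper, so I expect the proof to be quite short, essentially a reduction to Proposition~\ref{S not ruled} followed by two one-line exclusions.
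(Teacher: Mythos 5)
Your proposal is correct and coincides with the paper's own proof: the authors also reduce to Proposition \ref{S not ruled} and eliminate its two exceptional cases, the abelian/bielliptic case via \cite[Theorem]{L2} (minimal surface of Kodaira dimension zero which is not K3) and the elliptic quasi-bundle case via \cite[Theorem]{LM4} (multiple fibers do not lift to the ample vector bundle setting). The paper states this in exactly the same two-exclusion form, so there is nothing to add.
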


\begin{proof}
Simply note that
cases (1) and (2) of Proposition \ref{S not ruled} cannot
ascend to the ample vector bundle setting due to \cite[Theorem]{L2} and
\cite[Theorem]{LM4}. Actually, in the former case $S$ is a minimal surface
of Kodaira dimension zero, while in the latter $S$ is an elliptic surface with multiple fibers.
\end{proof}

\vskip0.5cm

\section{When $H_S$ is ample and spanned or very ample}\label{Sec2}

In this Section, we revisit all the above results in the ample and
spanned (Subsection $4.1$) or very ample (Subsection $4.2$) settings and we
improve some of them.

\medskip

\subsection{$H_S$ is an ample and spanned line bundle}

First of all, note that if $\delta\leq 3$, then the triplets
$(X, \mathcal E, H)$ are as in Theorems \ref{basic1} and \ref{basic2}.
Thus, assume that $\delta >3$.

\begin{thm}\label{basic3}
Let $(X, \mathcal{E}, H)$ be as in \eqref{0}, suppose
that condition \eqref{S} holds, and let $\delta\geq 4$. Then
$$\delta\geq 9,$$
except in the following cases:
\begin{enumerate}
\item $\delta =4$ and $(X, \mathcal E, H)$ is either as in cases $(1)$ and $(2)$ of {\em Theorem \ref{basic2}} $(m_2=12)$, or
$r=n-2$, $X$ is a $\mathbb P^{n-1}$-bundle over a smooth curve $B$ of genus $1$,
$\mathcal E_F = \mathcal O_{\mathbb P^{n-1}}(1)^{\oplus (n-2)}$ and $H_F = \mathcal O_{\mathbb P^{n-1}}(2)$
for every fiber  $F = \mathbb P^{n-1}$  of the bundle projection $X\to B$ and $(S,H_S)\cong (S_{1,-1},[2C_0])$ $(m_2=8)$;
\item $\delta = 6$ and $(X, \mathcal E, H)\cong (\mathbb{P}^2\times\mathbb{P}^2,\mathcal{O}_{\mathbb{P}^2\times\mathbb{P}^2} (1,1)^{\oplus 2},\mathcal{O}_{\mathbb{P}^2\times\mathbb{P}^2} (1,1))$; \hfill $(m_2=12)$
\item $\delta = 7$ and we have either $(X, \mathcal E, H)\cong (\mathbb{Q}^4,
\mathcal{S}\otimes\mathcal{O}_{\mathbb{Q}^4}(2),\mathcal{O}_{\mathbb{Q}^4}(1))$,
where $\mathcal{S}$ is a spinor bundle on $\mathbb{Q}^4$,
or $X$ is a linear section of the Grassmannian variety $\mathbb{G}(1,4)\subset\mathbb{P}^9$
and $(\mathcal{E},H)\cong (L^{\oplus r},L)$, where $L$ is the
ample generator of {\em Pic}$(X)$;
 \hfill $(m_2=12)$
\item $\delta = 8$ and $(X, \mathcal E, H)$ is one of the following triplets:
\begin{enumerate}
\item[$(a)$] $(\mathbb P^n, \mathcal O_{\mathbb P^n}(2)^{\oplus 2} \oplus \mathcal O_{\mathbb P^n}(1)^{\oplus (r-2)}, \mathcal O_{\mathbb P^n}(1))$; \hfill $(m_2=12)$
\item[$(b)$] $(\mathbb Q^n, \mathcal O_{\mathbb Q^n}(2) \oplus \mathcal O_{\mathbb Q^n}(1)^{\oplus (r-1)}, \mathcal O_{\mathbb Q^n}(1))$; \hfill $(m_2=12)$
\item[$(c)$] $X$ is a complete intersection of two quadric hypersurfaces of $\mathbb{P}^{n+2}$
and $(\mathcal{E},H)\cong (L^{\oplus r},L)$, where $L$ is the
ample generator of {\em Pic}$(X)$; \hfill $(m_2=12)$ \item[$(d)$]
$r=n-2$ and there is a vector bundle $\mathcal{V}$ on a smooth
curve $C$ of genus $q\leq 2$ such that
$X\cong\mathbb{P}_C(\mathcal{V})$, $H_F=\mathcal{O}_{F}(2)$ and
$\mathcal{E}_F\cong\mathcal{O}_F(1)^{\oplus (n-2)}$ for any fiber
$F\cong\mathbb{P}^{n-1}$ of $X\to C$; moreover, $(S,H_S)$ is, up to numerical equivalence, one
of the following pairs:
\begin{enumerate}
\item[$(d_1)$] $(S_{0,e},[2C_0+(e+3)f])$, with $0\leq e\leq 2$; \hfill $(m_2=20)$
\item[$(d_2)$] $(S_{1,e},[2C_0+(e+2)f])$, with $-1\leq e\leq 0$; \hfill $(m_2=16)$
\item[$(d_3)$] $(S_{2,e},[2C_0+(e+1)f])$, with $-2\leq e\leq -1$.
\hfill $(m_2=12)$
\end{enumerate}

\end{enumerate}
\end{enumerate}

\end{thm}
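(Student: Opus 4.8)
The plan is to push everything down to the polarized surface $(S,H_S)$ and then lift back up. By Proposition \ref{proposition} we have $\delta=m_2-d=c_2(J_1(H_S))-H_S^2$, so formula \eqref{formula} read with $L=H_S$ yields the working identity $\delta=e(S)+4(g-1)$, where $g=g(S,H_S)$. Condition \eqref{S} has two consequences: by Bertini $|H_S|$ contains a smooth curve, so that Theorem \ref{prop 2g+1, 2g+2} applies, and by Lemma \ref{doublecovers} we may assume $d\geq 3$; indeed the low-degree exceptions of that lemma are $(\mathbb P^2,\mathcal O(1))$ and $(\mathbb Q^2,\mathcal O(1))$, which have $g=0$ and $\delta\leq 0$, together with the double covers of $\mathbb P^2$, for which $\delta\geq 10$ and the desired bound already holds. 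I would then dispose of the non-ruled surfaces at once: if $\kappa(S)\geq 0$ then a positive multiple of $K_S$ is effective, so $K_SH_S\geq 0$, whence $2(g-1)=(K_S+H_S)H_S\geq d\geq 3$ and $g\geq 3$. Theorem \ref{2g+d} then gives $\delta\geq 2g+d\geq 9$, the surface-level counterexamples (abelian, bielliptic, or elliptic with multiple fibres) being exactly those that cannot lift, by \cite{L2} and \cite{LM4}. Hence every exceptional triplet has $S$ ruled.

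Next I would bound the sectional genus. Proposition \ref{2q} gives $\delta\geq 2g$, so $\delta\leq 8$ forces $g\leq 4$; since $g=0$ yields only scrolls and Veronese-type pairs with $\delta\leq 0$, we are left with $g\in\{1,2,3,4\}$. The top value is immediate: $g=4$ forces $\delta=8=2g$, i.e.\ equality in Proposition \ref{2q}, which directly produces the $\mathbb P^{n-1}$-bundle over a genus-$2$ curve with $H_F=\mathcal O(2)$ and $(S,H_S)\equiv(S_{2,e},[2C_0+(e+1)f])$, namely subcase $(d_3)$. The equality clause of Proposition \ref{2q}, permitting only $g=2$ or $g=2q$, also rules out $\delta=2g$ for $g=3$, so $g=3$ forces $\delta\in\{7,8\}$. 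We are thus reduced to the finitely many numerical profiles with $g\in\{1,2,3\}$ and $4\leq e(S)+4(g-1)\leq 8$, all of which occur among the rows of Table \ref{Table1}.

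For each such profile I would read the candidate pairs $(S,H_S)$ off Table \ref{Table1} (equivalently \cite{PTu}, refined by Theorem \ref{prop 2g+1, 2g+2}), retain only those with $H_S$ ample \emph{and} spanned, and lift them. When $g=1$ the identity $(K_S+H_S)H_S=0$ together with ampleness forces $H_S\equiv-K_S$, so $(S,H_S)$ is a del Pezzo surface of degree $12-\delta$ (the elliptic-scroll alternative having $\delta=0$); by \cite{LM3} the degrees $8,6,5,4$ lift, giving respectively the $\mathbb P^n$ and $\mathbb Q^n$ triplets of Theorem \ref{basic2}, the pair $(\mathbb P^2\times\mathbb P^2,\mathcal O(1,1)^{\oplus 2},\mathcal O(1,1))$, the spinor-bundle triplet on $\mathbb Q^4$ and the $\mathbb G(1,4)$-section triplet, and the intersection-of-two-quadrics triplet together with its $\mathbb P^n$ and $\mathbb Q^n$ realizations, whereas degree $7$ does not lift (cf.\ the $\delta=5$ step of Theorem \ref{basic2}). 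When $g\in\{2,3\}$, spannedness removes every pair supported on a non-trivial blow-up---the polarizations $\sigma^*L_0-\sum r_iE_i$ with $r_i\geq 1$ not being globally generated here---leaving only the geometrically ruled conic bundles with $H_Sf=2$; these lift through Lemma \ref{lemma 2} and \cite{LM2} to $\mathbb P^{n-1}$-bundle structures, producing $(S_{1,-1},[2C_0])$ at $\delta=4$ and the subcases $(d_1),(d_2)$ at $\delta=8$, the $\mathbb P^{n-2}$-bundle and quadric-fibration alternatives being excluded by \cite{dF}, \cite{Ma}, \cite{ABW} and Lemma \ref{lemma 1}. In particular $\delta=5$ disappears: its only spanned $g=1$ candidate is the non-lifting degree-$7$ del Pezzo, there is no conic bundle with $\delta=5$, and the $g=2$ candidates are blow-ups that fail to be spanned or have $d<3$.

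The main obstacle is twofold. The more laborious part is the spannedness bookkeeping on (blown-up) geometrically ruled surfaces: one must decide precisely which numerically admissible divisors $aC_0+bf$, and which of their proper transforms, carry a globally generated $H_S$, since this is what empties $\delta=5$, trims the $g=4$ list by excluding $e=0$, and collapses the $\delta=4$ possibilities to $(S_{1,-1},[2C_0])$ and the del Pezzo case. The more delicate part is the sporadic del Pezzo liftings for $\delta=6,7,8$: each surviving anticanonical surface must be recognized as an ample-divisor or linear section of the appropriate Mukai-type variety and fed into the structural classification of \cite{LM3} to produce $\mathbb P^2\times\mathbb P^2$, the spinor bundle on $\mathbb Q^4$, the Grassmannian $\mathbb G(1,4)$, and the complete intersection of two quadrics. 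The remaining work---computing $d$, $g$ and $m_2$ to match the values recorded in the statement---is routine Chern-class calculation.
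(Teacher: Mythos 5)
Your architecture is the same as the paper's: reduce to $(S,H_S)$ via $\delta=e(S)+4(g-1)$, prune the candidate pairs of Table \ref{Table1} by spannedness, and lift what survives by \cite{LM2}, \cite{LM3}. One genuine improvement: invoking Theorem \ref{2g+d} together with Lemma \ref{doublecovers} (so that $d\geq 3$ up to exceptions having $\delta\leq 0$ or $\delta\geq 10$) disposes of \emph{all} non-ruled candidates in one stroke, since $d\geq 3$ and $\kappa(S)\geq 0$ force $g\geq 3$, whence $\delta\geq 2g+d\geq 9$; the paper instead excludes rows 9--12, 14--15, 28--29 piecemeal via \cite{LL}, \cite{L2} and \cite{LM4}. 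Your $g=1$ (del Pezzo, lifted or killed by \cite{LM3}) and $g=4$ (equality in Proposition \ref{2q}) branches also match the paper.

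There is, however, a genuine gap exactly where you write ``the more laborious part'': the spannedness pruning for $g\in\{2,3\}$ is never carried out, and the one mechanism you do offer --- that polarizations $\sigma^*L_0-\sum r_iE_i$ on non-trivial blow-ups ``are not globally generated here'' --- is both unproven and inapplicable to the decisive cases. The pairs whose exclusion actually shapes the statement are \emph{geometrically ruled}, not blow-ups: $(S_{1,-1},[3C_0-f])$ (row 7, $d=3$, which must die for $\delta=4$ to collapse to $(S_{1,-1},[2C_0])$), $(S_{1,0},[3C_0+f])$ and $(S_{1,-1},[5C_0-2f])$ (rows 32--33, which must die at $\delta=8$), and the trimming $e\neq 1$ in $(d_2)$, $e\neq 0$ in $(d_3)$; moreover the one relevant blow-up, row 17 at $\delta=5$, has $d=3$, so your degree bound does not touch it. ``Blow-ups are not spanned'' proves none of this (your own $g=1$ list consists of blown-up surfaces carrying very ample polarizations). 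The paper closes these cases with two inputs you never invoke: the elementary remark that a spanned $H_S$ cannot restrict to a degree-one line bundle on the irrational section $C_0$ (this kills row 32 and trims the $e$-ranges in $(d_2)$, $(d_3)$), and the classifications of ample and spanned polarized surfaces of sectional genus $2$ and $3$, namely \cite[Theorem (3.1)]{LP} and \cite[Tables I--II]{LL} (these kill row 7, row 17 and row 33). Without these, the exact lists in case (1), in $(4)(d_2)$--$(d_3)$, and the emptiness of $\delta=5$ remain unjustified assertions.
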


\begin{proof}
If $\delta =4$, then from Theorem \ref{basic2} it follows case (1)
of the statement. Actually, by \cite[Theorem]{LP2} the remaining possibilities
in Theorem \ref{basic2}\ (3) cannot occur, $H_S$ being spanned.
If $\delta =5$, then by Lemma \ref{doublecovers}
and Table \ref{Table1} we deduce that the only possible cases for
$(S,H_S)$ are N. 12 and 15
of Table \ref{Table1}. The former
case does not lift to the vector bundle setting by \cite[Theorem 4
and Remark in $\S 2$]{LM3} and the latest one cannot occur since
$g=2$ and $H_S$ is required to be ample and spanned (see
\cite[Theorem (3.1)]{LP}). This shows that $\delta =5$ cannot
occur.
If $\delta =6,7$ we have $g=1$ or $2$, by Table \ref{Table1}. If $g=1$, from Lemma
\ref{doublecovers} and \cite[Theorem 4 and Remark in $\S 2$]{LM3}
we obtain immediately cases (2) and (3) of the statement. On the other
hand it cannot be $g=2$ because $H_S$ is ample and spanned: actually,
in cases N. 17, 18 and 20
of Table \ref{Table1}, $S$ is not a minimal surface
and this is not compatible with \cite[Theorem (3.1)]{LP} again.
Suppose
now that $\delta =8$. First of all, assume that $\kappa(S)\geq 0$.
Then $(S,H_S)$ is as in cases N. 22, 23 and 24 of Table
\ref{Table1}. Since $H_S$ is ample and spanned and $g=3$ in all
cases, by \cite[Table I, p. 268]{LL} we see that N. 22 and 23
cannot occur and that in N. 24 the
surface $S$ is a minimal elliptic fibration with multiple fibers,
but this situation does not lift to the vector bundle setting by
\cite[Theorem]{LM4}. Finally, suppose that
$\kappa(S)=-\infty$, i.e. $S$ is a ruled surface. From Table
\ref{Table1} we deduce that either
\begin{enumerate}
\item[(i)] $H_S^2=4, g=1, e(S)=8$,
\end{enumerate}
or $(S,H_S)$ is,
up to numerical equivalence, one of the following
pairs:
\begin{enumerate}
\item[(ii)] $(S_{0,e},[2C_0+(3+e)f])$ with $e=0,1,2$, $H_S^2=12, g=2, e(S)=4$;
\item[(iii)] $(S_{1,e},[2C_0+(2+e)f])$ with $e=-1,0,1$, $H_S^2=8, g=3, e(S)=0$;
\item[(iv)] $(S_{1,0},[3C_0+f])$ with $H_S^2=6, g=3, e(S)=0$;
\item[(v)] $(S_{1,-1},[5C_0-2f])$ with $H_S^2= 5, g=3, e(S)=0$;
\item[(vi)] $(S_{2,e}, [2C_0+(e+1)f])$ with $-2\leq e\leq 0$, $H_S^2=4, g=4, e(S)=-4$.
\end{enumerate}
Note that (iv) cannot occur since in this case $H_S\cdot C_0=1$ with $g(C_0)=1$
implies that $H_S$ is not spanned. Moreover, from \cite[Table II, p.\ 268]{LL} it follows that
also case (v) is not possible since $0=e(S)=12(1-q)-K_S^2$. In case (i), by
\cite[Theorem 4 and Remark in $\S 2$]{LM3} we get cases $(a), (b)$
and $(c)$ of the statement. Finally, having in mind that $H_S$ is
ample and spanned, cases (ii), (iii) and (vi) lead to cases
$(d_1),(d_2)$ and $(d_3)$ of the statement by \cite{LM2}.
\end{proof}

\begin{cor}\label{basic4}
Let $(X, \mathcal{E}, H)$ be as in \eqref{0} and suppose that
condition \eqref{S} holds. Then
$$m_2\leq 11$$ if and only if either $(X,\mathcal
E,H)\in\mathcal{A}$\ $(m_{2}=0,3,2,2,d\leq 11)$, or $r=n-2$, $X$
is a $\mathbb P^{n-1}$-bundle over a smooth curve $B$ of genus
$1$, $\mathcal E_F = \mathcal O_{\mathbb P^{n-1}}(1)^{\oplus
(n-2)}$ and $H_F = \mathcal O_{\mathbb P^{n-1}}(2)$ for every
fiber  $F = \mathbb P^{n-1}$  of the bundle projection $X\to B$
and $(S,H_S)\cong (S_{1,-1},[2C_0])$\ $(m_{2}=8)$.
\end{cor}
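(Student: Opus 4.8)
The plan is to deduce the statement entirely from the preceding classification results by writing $m_2 = \delta + d$, where $\delta = m_2 - d$ and $d = H_S^2 \geq 1$, and then running a finite case analysis on the value of $\delta$. The ``if'' direction is immediate: each triplet in the statement carries an $m_2$-value already recorded in Theorems \ref{basic1} and \ref{basic3}, namely $m_2 \in \{0,2,3\}$ or $m_2 = d \leq 11$ for the members of $\mathcal{A}$, and $m_2 = 8$ for the distinguished $\mathbb{P}^{n-1}$-bundle over an elliptic curve; all of these satisfy $m_2 \leq 11$. The substance is the converse, so from here I would assume $m_2 \leq 11$ and show that the triplet appears in the list.

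For the range $\delta \leq 3$ I would invoke Theorems \ref{basic1} and \ref{basic2}, which together describe all such triplets. Splitting by $\delta$: the cases $\delta < 0$ and $\delta = 0$ produce precisely the five members of $\mathcal{A}$, with $m_2 = 0,3,2,2$ in the first four and $m_2 = d$ in the scroll case (so that the scroll survives exactly when $d \leq 11$); the values $\delta = 1,2$ are gap values by Theorem \ref{basic2} and do not occur; and $\delta = 3$ forces $(X,\mathcal{E},H) = (\mathbb{P}^n, \mathcal{O}(1)^{\oplus(n-2)}, \mathcal{O}(3))$ with $m_2 = 12 > 11$, hence is excluded. Thus in this range only the members of $\mathcal{A}$ with $d \leq 11$ remain.

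For $\delta \geq 4$ I would apply Theorem \ref{basic3}: either $\delta$ takes one of the exceptional values $4,6,7,8$ listed there, or $\delta \geq 9$. In the exceptional range one simply reads off the recorded $m_2$-values. Every listed triplet has $m_2 \geq 12$ (the cases of Theorem \ref{basic3} (2)--(4) give $m_2 = 12$, or $m_2 = 16, 20$ in $(d_2),(d_1)$), with the single exception of the triplet in Theorem \ref{basic3} (1) for which $r = n-2$, $X$ is a $\mathbb{P}^{n-1}$-bundle over an elliptic curve, $H_F = \mathcal{O}_{\mathbb{P}^{n-1}}(2)$ and $(S,H_S) \cong (S_{1,-1}, [2C_0])$, where $m_2 = 8 \leq 11$. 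This is exactly the distinguished triplet in the statement, and it is the only survivor for $4 \leq \delta \leq 8$.

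The one genuinely non-bookkeeping step, and the point I would treat most carefully, is the range $\delta \geq 9$: here $m_2 = \delta + d$ is only known a priori to be $\geq 10$, so the bound on $\delta$ alone does not rule out $m_2 = 10, 11$. I would instead bring in Lemma \ref{doublecovers}, using that $H_S$ is ample and spanned: either $d \geq 3$, whence $m_2 \geq 9 + 3 = 12$, or $(S,H_S)$ is one of the three exceptional pairs of that lemma. The first two of these have $\delta = -1$ and $\delta = 0$, incompatible with $\delta \geq 9$, while the double-cover case (iii) has $m_2 = m \geq 12$. In every alternative $m_2 \geq 12 > 11$, so $\delta \geq 9$ contributes nothing. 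Collecting the survivors from all ranges yields precisely the members of $\mathcal{A}$ with $d \leq 11$ together with the single $m_2 = 8$ triplet, which is the asserted classification.
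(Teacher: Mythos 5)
Your proposal is correct and follows essentially the same route as the paper: the paper's (very compressed) proof likewise writes $m_2=\delta+H_S^2$, invokes Theorems \ref{basic1}, \ref{basic2}, \ref{basic3} for the small-$\delta$ classification, and uses Lemma \ref{doublecovers} to guarantee $d\geq 3$ (hence $m_2\geq 12$) once $\delta\geq 9$. Your write-up is just the unpacked case-by-case version of that one-line argument, including the correct identification of the single $m_2=8$ survivor among the $\delta=4$ exceptions.
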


\begin{proof}
Since $m_2=\delta + H_S^2$ and $H_S^2\geq 3$ unless a few exceptions for the pairs $(S,H_S)$ described in Lemma \ref{doublecovers},
the result follows from Theorems \ref{basic1}, \ref{basic2} and \ref{basic3}.
\end{proof}

\begin{cor}
Let $(X, \mathcal{E}, H)$ be as in \eqref{0} and suppose that
\eqref{S} holds. Then $$\delta\geq 2g+3$$ unless either $g\geq 4$
and $(X,\mathcal E,H)$ is as in {\em Proposition \ref{2q}} and
{\em Theorem \ref{2g+1, 2g+2}}, or $g\leq 3$ and one of the
following cases occurs:
\begin{enumerate}
\item $(X,\mathcal E,H)\in\mathcal{A}\cup\mathcal{B}$\hfill
$(m_{2}=0,3,2,2,d,12,12,12)$;
\item $r=n-2$, $X$ is a $\mathbb
P^{n-1}$-bundle over a smooth curve $B$ of genus $1$, $\mathcal
E_F = \mathcal O_{\mathbb P^{n-1}}(1)^{\oplus (n-2)}$ and $H_F =
\mathcal O_{\mathbb P^{n-1}}(t)$, with $t=2$ or $3$, for every
fiber $F = \mathbb P^{n-1}$  of the bundle projection $X\to B$;
moreover, $(S,H_S)$ is, up to numerical equivalence, either
$(S_{1,-1},[3C_0-f])\ (m_{2}=7)$ or $(S_{1,e},[2C_0+(e+1)f])$ with
$e\in\{-1,0\}$\hfill
 $(m_{2}=8)$;
\item $r=n-2$ and there is a vector bundle $\mathcal{V}$ on a
smooth curve $C$ of genus $q\leq 2$ such that
$X\cong\mathbb{P}_C(\mathcal{V})$, $H_F=\mathcal{O}_{F}(2)$,
$\mathcal{E}_F\cong\mathcal{O}_F(1)^{\oplus (n-2)}$ for any fiber
$F\cong\mathbb{P}^{n-1}$ of $X\to C$ and $(S,H_S)$ is, up to numerical equivalence,
$(S_{1,e},[2C_0+(e+2)f])$ with $e\in\{-1,0\}$. \hfill $(m_2=16)$
\end{enumerate}
\end{cor}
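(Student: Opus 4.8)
The plan is to combine the two inequalities already available in this setting. Since $H_S$ is spanned, Bertini's theorem yields a smooth member of $|H_S|$, so the hypothesis of Theorem \ref{2g+1, 2g+2} is satisfied, while Proposition \ref{2q} gives $\delta\geq 2g$. First I would set aside the triplets with $\delta\leq 0$: by Theorem \ref{basic1} these are exactly the members of $\mathcal{A}$, which satisfy $\delta\leq 0<2g+3$ and so are genuine exceptions. For every other triplet one has $\delta>0$, Proposition \ref{2q} applies, and since $\delta$ and $2g$ are integers it suffices to examine the three ranges $\delta\in\{2g,\,2g+1,\,2g+2\}$; outside them $\delta\geq 2g+3$, as wanted. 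Thus the whole problem reduces to identifying which triplets realize these three values.

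The two lower ranges are handled directly by the structure theorems. If $\delta=2g$, Proposition \ref{2q} forces $r=n-2$, $X$ a $\mathbb{P}^{n-1}$-bundle over a smooth curve with $\mathcal{E}_F=\mathcal{O}_F(1)^{\oplus(n-2)}$, and $(S,H_S)$ equal (up to numerical equivalence) to $(S_{1,-1},[3C_0-f])$ or to $(S_{q,e},[2C_0+(e+1)f])$ with $g=2q$; the subcase $q=1$, where $g=2\leq 3$, is precisely case (2) of the statement, while $q\geq 2$ gives $g\geq 4$. If $\delta=2g+1$, Theorem \ref{2g+1, 2g+2}(A) leaves only $(\delta,g)=(3,1)$, $(\delta,g)=(5,2)$, or the family with $g=2q\geq 4$. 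By Theorem \ref{basic2} the first option is the single triplet $(\mathbb{P}^n,\mathcal{O}_{\mathbb{P}^n}(1)^{\oplus(n-2)},\mathcal{O}_{\mathbb{P}^n}(3))\in\mathcal{B}$ (case (1)); the second I would discard, since the argument proving Theorem \ref{basic3} (Lemma \ref{doublecovers} reduces $\delta=5$ to entries N.\ 13 and N.\ 17 of Table \ref{Table1}, both incompatible with spannedness) shows that $\delta=5$ never occurs once $H_S$ is spanned; and the third gives $g\geq 4$.

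Finally, for $\delta=2g+2$ I would invoke Theorem \ref{2g+1, 2g+2}(B): its range $g\geq 4$ produces the families $(\delta_1)$ and $(\delta_2)$ there, both with $g\geq 4$. The remaining range $g\leq 3$, that is $(\delta,g)\in\{(4,1),(6,2),(8,3)\}$, is where I expect the main obstacle to lie, since here spannedness must be played against Table \ref{Table1} by matching each surface with the liftable triplets classified in Theorem \ref{basic3}. Concretely, for $(4,1)$ only the sectional genus $1$ triplets among the $\delta=4$ list survive, and these are the two $\delta=4$ members of $\mathcal{B}$ (case (1)); for $(6,2)$ Theorem \ref{basic3} shows that every spanned triplet with $\delta=6$ has sectional genus $1$, so this value of $g$ does not occur; and for $(8,3)$ the only sectional genus $3$ family appearing in Theorem \ref{basic3} is $(d_2)$, which is exactly case (3). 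Collecting the three ranges together with the preliminary discussion of $\delta\leq 0$ exhausts all triplets with $\delta\leq 2g+2$ and reproduces the stated list of exceptions, the delicate bookkeeping being the correct computation of $g$ and $d$ for each entry and the use of Lemma \ref{doublecovers} to eliminate the low-degree non-spanned surfaces.
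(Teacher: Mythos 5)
Your proof is correct and arrives at exactly the stated list, but it is organized along a different decomposition than the paper's, which is worth comparing. The paper's own proof is a one-line reduction: if $\delta\leq 2g+2$ and $g\leq 3$, then $\delta\leq 8$, so the $g\leq 3$ exceptions are read off by filtering the complete classifications of Theorems \ref{basic1}, \ref{basic2} and \ref{basic3} (which together cover $\delta\leq 0$, $0<\delta\leq 5$, and $4\leq\delta\leq 8$ under \eqref{S}), while the $g\geq 4$ exceptions are by definition those of Proposition \ref{2q} and Theorem \ref{2g+1, 2g+2}. You instead stratify by the value of $\delta-2g$: you dispose of $\delta\leq 0$ via Theorem \ref{basic1} (giving $\mathcal{A}$), use Proposition \ref{2q} to confine the remaining exceptions to $\delta\in\{2g,2g+1,2g+2\}$, and then match the equality cases of Proposition \ref{2q} and of Theorem \ref{2g+1, 2g+2}(A),(B) against Theorems \ref{basic2} and \ref{basic3} to decide which of them survive under \eqref{S}. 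The toolkits coincide, but your route makes explicit two points the paper leaves implicit: the Bertini observation that \eqref{S} supplies the smooth curve in $|H_S|$ required to invoke Theorem \ref{2g+1, 2g+2}, and the restriction to $\delta>0$ before quoting Proposition \ref{2q} --- a prudent move, since the bare inequality $\delta\geq 2g$ fails for the two negative-$\delta$ triplets and for the scrolls of positive genus in $\mathcal{A}$, which that proposition's statement does not exclude. In exchange, the paper's route never needs Theorem \ref{2g+1, 2g+2} for $g\leq 3$ at all, so it is shorter. One small imprecision in your write-up: in ruling out $(\delta,g)=(5,2)$ you say entries N.~13 and N.~17 of Table \ref{Table1} are ``both incompatible with spannedness''; in fact only N.~17 is excluded by spannedness (\cite[Theorem (3.1)]{LP}), while N.~13, being a del Pezzo pair, is excluded because it does not lift to the ample vector bundle setting (\cite[Theorem 4 and Remark in \S 2]{LM3}). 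The conclusion you need --- that $\delta=5$ cannot occur under \eqref{S} --- is nevertheless correct, being precisely part of Theorem \ref{basic3}.
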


\begin{proof}
Let $\delta\leq 2g+2$. If $g\leq 3$, then $\delta\leq 8$ and the assertion follows from Theorems \ref{basic1}, \ref{basic2} and \ref{basic3}.
If $g \geq 4$, then Proposition \ref{2q} and Theorem \ref{2g+1, 2g+2} apply.
\end{proof}

\medskip

When $S$ is not a ruled surface, we have the following two results.

\begin{prop}\label{basic5}
Let $(X, \mathcal{E}, H)$ be as in \eqref{0} and suppose that \eqref{S} holds.
If $S$ is not a ruled surface, then $\delta\geq 2g+5$.
\end{prop}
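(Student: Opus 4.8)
The plan is to start from the bound $\delta\ge 2g+d$ of Theorem \ref{2g+d} and to upgrade it to $\delta\ge 2g+5$ by controlling both the degree $d$ and the Euler number $e(S)$. Since $S$ is not ruled and $H_S$ is ample and spanned, Lemma \ref{doublecovers} shows that either $d\ge 3$, or $(S,H_S)$ is a double plane as in case (iii) of that lemma; in the latter case the values recorded there give $\delta=m-d=2g+4b(b-1)\ge 2g+8$ (since $b\ge 2$), so we are done. Thus I may assume $d\ge 3$, and then Theorem \ref{2g+d} already settles $d\ge 5$. Hence the whole problem reduces to $d\in\{3,4\}$.

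For these two values I would argue by contradiction, assuming $\delta\le 2g+4$. Rewriting the identity $(\#)$ as $\delta=e(S)+4(g-1)$, this assumption reads $e(S)\le 8-2g$. Since $S$ is not ruled one has $e(S)\ge 0$ and $K_SH_S\ge 0$, and the sectional genus relation $K_SH_S=2g-2-d$ then gives $2g-2\ge d\ge 3$ together with $g\le 4$, so $g\in\{3,4\}$. This leaves exactly four numerical types: $(d,g)=(3,3)$ with $e(S)\le 2$, $K_SH_S=1$; $(d,g)=(3,4)$ with $e(S)=0$, $K_SH_S=3$; $(d,g)=(4,3)$ with $e(S)\le 2$, $K_SH_S=0$; and $(d,g)=(4,4)$ with $e(S)=0$, $K_SH_S=2$. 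By Remark \ref{general type} the bound $e(S)\le 2$ rules out $\kappa(S)=2$, so in every case $\kappa(S)\in\{0,1\}$.

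I would then eliminate the four types. The type $(4,3)$ has $K_SH_S=0$, forcing $K_S\equiv 0$ and $S$ minimal of Kodaira dimension zero; since $e(S)\le 2$ excludes K3 and Enriques surfaces, $S$ would be abelian or bielliptic, which do not occur as zero loci by \cite[Theorem]{L2}. When $e(S)=0$ (types $(3,4)$ and $(4,4)$) the surface $S$ is minimal, because blowing up a non-ruled minimal surface strictly raises $e(S)$; as $K_SH_S>0$ excludes $\kappa(S)=0$, here $S$ is a properly elliptic quasi-bundle $f:S\to B$ with $\chi(\mathcal O_S)=0$, and the canonical bundle formula together with the value of $K_SH_S$ forces either a multiple fibre, excluded by \cite[Theorem]{LM4}, or a relation $(2g(B)-2)FH_S=K_SH_S$ with $g(B)\ge 2$, which is numerically impossible when $K_SH_S=3$ and forces $FH_S=1$ when $K_SH_S=2$; in the last case $H_S|_F$ has degree one on the elliptic fibre $F$, hence is not spanned, against \eqref{S}. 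Finally, in the type $(3,3)$ the relation $K_SH_S=K_{S_0}H_0+\sum a_i$ from Lemma \ref{lemma}, with $a_i=H_SE_i\ge 1$ and $K_{S_0}H_0\ge 0$, forces a single blow-up point with $a_1=1$ and $K_{S_0}\equiv 0$; thus $S$ is an abelian or bielliptic surface $S_0$ blown up at one point, with $H_S=\sigma^*H_0-E$ and $H_0^2=4$. But then $\chi(\mathcal O_S)=0$ and Kodaira vanishing give $h^0(H_0)=\tfrac12H_0^2=2$, whence $h^0(H_S)\le 2$, so $H_S$ cannot be spanned while $H_S^2=3>0$, again contradicting \eqref{S}.

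The main obstacle, I expect, is precisely this last family of non-minimal Kodaira-dimension-zero surfaces, together with the properly elliptic quasi-bundles: both genuinely arise as zero loci for smaller $d$ (compare case $(6)$ of Theorem \ref{basic2}), so they cannot be discarded by the lifting theorems \cite{L2} and \cite{LM4} alone. What kills them here is the spannedness hypothesis \eqref{S}, exploited through the Riemann--Roch estimate $h^0(H_0)=\tfrac12H_0^2$ on surfaces with $\chi(\mathcal O_S)=0$ and through the non-spannedness of a degree-one bundle on an elliptic fibre. Getting these numerical-versus-geometric exclusions exactly right, rather than the reductions of the first two paragraphs, is the delicate part of the argument.
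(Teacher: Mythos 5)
Your proof is correct, and while it shares the paper's overall skeleton (contradiction from $\delta\le 2g+4$, the identity $(\#)$, exclusion of lifted cases via \cite[Theorem]{L2} and \cite[Theorem]{LM4}, and a final Riemann--Roch-plus-spannedness contradiction on a blown-up abelian or bielliptic surface, which is essentially the paper's own $\delta=2g+3$ endgame), it differs in two genuine ways. First, you cap the degree at the outset: Theorem \ref{2g+d} together with $\delta\le 2g+4$ gives $d\le 4$, and you organize everything by the four types $(d,g)\in\{3,4\}\times\{3,4\}$. The paper instead splits by $\delta=2g+3$ versus $\delta=2g+4$, computes $(e(S),g)=(1,3),(2,3),(0,4)$, and must then dispose of the subcases $(d,H_SK_S)=(6,0),(5,1)$ by hand --- subcases your reduction never sees. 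Your route also absorbs the paper's separate appeal to \cite[Theorem (3.1)]{LP} for $g=2$: the K3 double plane has $d=2$, so it is already covered by Lemma \ref{doublecovers}, and $K_SH_S=2g-2-d\ge 0$ with $d\ge 3$ forces $g\ge 3$. Second, and more substantially, in the properly elliptic types $(3,4)$ and $(4,4)$ you replace the paper's citations of \cite[Proposition 1.4]{LP2} and of the table in \cite[Proposition 4.4]{LP2} by a direct use of the canonical bundle formula with $\chi(\mathcal O_S)=0$: the parity of $(2g(B)-2)FH_S$ kills $K_SH_S=3$, while $K_SH_S=2$ forces $FH_S=1$, incompatible with \eqref{S} because a degree-one line bundle on an elliptic fibre has a base point. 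This is more elementary and self-contained and, given the paper's own remark about a gap in \cite[Proposition 4.5]{LP2}, arguably more robust.

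One step deserves tightening: in type $(3,3)$ the relation $K_SH_S=K_{S_0}H_0+\sum a_i$ from Lemma \ref{lemma} presupposes that $S$ is not minimal, so it cannot by itself ``force a single blow-up point''. You must first exclude minimal $S$: if $S$ is minimal with $\kappa(S)=0$ then $K_S\equiv 0$ contradicts $K_SH_S=1$, and if $S$ is minimal with $\kappa(S)=1$ then $K_S^2=0$ and Noether's formula gives $12\chi(\mathcal O_S)=e(S)\in\{1,2\}$ (here $e(S)\ge 1$, since $e(S)=0$ would mean $\delta=2g+2$, excluded by Theorem \ref{2g+d}), which is impossible. This is a one-line repair --- the paper makes the analogous check explicitly in its case $(e(S),g)=(2,3)$ --- and once it is in place every branch of your argument closes.
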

\begin{proof}
Recall that $\delta=m_2-d$ and assume, by contradiction, that
$\delta\leq 2g+4$. If $d \leq 2$, then from Lemma
\ref{doublecovers} it follows that $d=2$, $g=b-1$ and
$e(S)=2(2b^2-3b+3)$ for some integer $b\geq 3$, $S$ being not a
ruled surface. Then
$$\delta = e(S)+4(g-1) = 2g + [2b-6+2(2b^2-3b+3)] \geq 2g+24,$$
which is impossible. Moreover, if $g \leq 2$ then $g=2$ and by \cite[Theorem (3.1)]{LP}
the only possibility for $S$ is to be a K3
surface, in which case however, $\delta = e(S)+4(g-1)=
2g+e(S)+2g-4 = 2g + 24$. So we can assume that $d \geq 3$ and
$g\geq 3$. Note that by Theorem \ref{2g+d} cases $\delta = 2g+1,
2g+2$ do not occur. Therefore, it is enough to show that also
cases $\delta=2g+3,2g+4$ cannot occur.

\medskip

First suppose that $\delta=2g+3$. Then from $(\#)$ we deduce that
$e(S) + 2g = 7$ and then $(e(S),g) = (1,3)$ since $e(S) \geq 0$, $S$
being not a ruled surface, and $g \geq 3$. Thus $4 = 2g-2 =
d+H_SK_S \geq 3+ H_SK_S$, hence $H_SK_S \leq 1$. It cannot be
$H_SK_S=0$, otherwise, $K_S$ would be numerically trivial, due to
the ampleness of $H$, but in this case $S$, could not satisfy
$e(S)=1$, in view of the classification. Therefore $H_SK_S=1$ and
by Lemma \ref{lemma} we see that $\kappa(S)=0$. Moreover,
$(S,H_S)$ has $(S_0,L_0)$ as simple reduction, where $S_0$ is a
minimal surface of Kodaira dimension zero with $e(S_0)=0$. So
$S_0$ is either abelian or bielliptic, and therefore
$\chi(\mathcal O_{S_0})=0$. On the other hand, with the same
notation as in Lemma \ref{lemma}, $H_S = \sigma^*L_0 - E$,
$\sigma:S \to S_0$ being the reduction morphism contracting the
exceptional curve $E$ at $p \in S_0$. We have $h^0(H_S) = h^0(L_0)
- \varepsilon$ where $\varepsilon = 0$ or $1$ according to whether
$p$ is a base point of $|L_0|$ or not. Then, due to the
spannedness of $H_S$, by the Riemann--Roch theorem and the Kodaira
vanishing theorem we get
$$3 \leq h^0(H_S) = h^0(L_0) - \varepsilon = \chi(\mathcal O_{S_0})+ \frac{1}{2}L_0^2 -\varepsilon
= \chi(\mathcal O_{S_0}) + 2 - \varepsilon.$$ This gives
$0= \chi(\mathcal O_{S_0})=1+ \varepsilon \geq 1$, a contradiction.

\medskip

Finally, suppose that $\delta = 2g+4$. By arguing as in case
$\delta = 2g+3$, we get only two possibilities for $(e(S),g)$,
namely,
\begin{enumerate}
\item[a)] $(2,3)$, or \item[b)] $(0,4)$.
\end{enumerate}

\noindent In case a), by using ($\#$) again,
we see that $e(S)+ 2(g-3) = 2$, hence $(H_S^2, H_S K_S)$ is either
$(3,1)$ or $(4,0)$, by genus formula. Both possibilities rule out.
Actually, in the latter case $K_S$ would be numerically trivial,
but this cannot occur for $e(S)=2$. In the former case, $S$ could
be minimal with $\kappa(S)=1$, but then $K_S^2=0$, which
contradicts condition $e(S)=2$ in view of Noether's formula. So
$S$ is not minimal. Thus Proposition \ref{lemma} implies that
$\kappa(S)=0$ and then $e(S)=2$ says that $S$ is an abelian or a
bielliptic surface blown-up at two points. Then $K_S \equiv E$
where $E$ consists of two irreducible curves, hence $1 = H_S K_S =
E K_S \geq 2$, a contradiction.

Now consider case b). By using the facts that $d \geq 3$ and $H_S
K_S \geq 0$, we get for $(d, H_S K_S)$ the following list of
possible values: $(6,0), (5,1), (4,2), (3,3)$. If $H_S K_S = 0$
(first case), recalling that $e(S)=0$ we conclude that $S$ is
either an abelian or a bielliptic surface. Both cases do not
ascend the ample vector bundle setting due to \cite[Theorem]{L2}.
If $H_S K_S = 1$ (second case), then
Lemma \ref{lemma} ii)
implies that $\kappa(S)=0$ and then $e(S)=0$ allows us to conclude
that $S$ is an abelian or a bielliptic surface; but then we get $0
= H_S K_S = 1$, a contradiction. Next let us deal with the third
and the fourth cases at the same time. Since $e(S)=0$ we have that
either i) $S$ is an abelian or a bielliptic surface, or ii) $S$ is
a minimal elliptic fibration. In subcase i) $K_S$ is numerically
trivial, hence $0 = H_S K_S = 2$ or $3$, a contradiction. In
subcase ii) we have $K_S^2=0$. This combined with the fact that
$e(S)=0$ implies $\chi(\mathcal O_S)=0$, by Noether's formula. Now
use \cite[table in Proposition 4.4 and Proposition 1.4]{LP2}. For
$d=3$, since $S$ is a minimal elliptic surface, \cite[Proposition
1.4, case (1.4.2), (i)]{LP2} shows that $\chi(\mathcal O_S) = 3$,
a contradiction. On the other hand, for $d=4$, since
$\chi(\mathcal O_S) = 0$, \cite[table in Proposition 4.4]{LP2}
shows that necessarily the elliptic fibration of $S$ has some
multiple fibers. Therefore this case does
not ascend to the ample vector bundle
setting in view of \cite[Theorem]{LM4}.
\end{proof}

\begin{thm}\label{main}
Let $(X, \mathcal{E}, H)$ be as in \eqref{0} and suppose that
\eqref{S} holds.
If $S$ is not a ruled surface and $\delta = 2g+5$, then
$X = \mathbb P_{S_0}(\mathcal V)$, where $\mathcal V$ is a vector bundle of rank $(n-1)$ over a smooth
minimal surface $S_0$, which is either abelian or bielliptic; moreover, $r=n-2$ and $\mathcal E = \pi^* \mathcal G \otimes \xi$,
where $\xi$ is the tautological line bundle on $X$, $\mathcal G$ is a vector bundle of rank $n-2$ on $S_0$ and
$\pi:X \to S_0$ is the bundle projection; furthermore, $\pi|_{Z}: Z \to S_0$ is a birational morphism expressing
$Z$ as $S_0$ blown up a single point, say $p$. Finally, $H= 2\xi + \pi^*\big(A-2(\det \mathcal V + \det \mathcal G) \big)$, where
$A$ is an ample and spanned line bundle on $S_0$ with $A^2=8$ and $p$ belongs to its second jumping set $\mathcal J_2(S_0, A)$.
\end{thm}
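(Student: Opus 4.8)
The plan is to identify the equality case of Proposition \ref{basic5} by pushing its numerical reductions to the boundary and then lifting the resulting surface to the bundle setting. First I would rewrite the hypothesis through $(\#)$: since $\delta = m_2 - d = e(S)+4(g-1)$, the condition $\delta = 2g+5$ is equivalent to $e(S) = 9-2g$. As $S$ is not ruled we have $e(S)\geq 0$, so $g\leq 4$; and exactly as in the proof of Proposition \ref{basic5}, the hypothesis \eqref{S} together with Lemma \ref{doublecovers} (which forces $d\geq 3$, the double-cover values never yielding $\delta=2g+5$) and the exclusion of the $g=2$ K$3$ case give $d\geq 3$ and $g\geq 3$, while Theorem \ref{2g+d} gives $d\leq 5$. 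Thus $g\in\{3,4\}$ and $d\in\{3,4,5\}$, and the sectional genus formula $2g-2 = d + K_SH_S$ yields $K_SH_S = 2g-2-d\geq 0$.

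Next I would pin down $\kappa(S)$ and the birational type of $S$, handling $g=3$ ($e(S)=3$) and $g=4$ ($e(S)=1$) in parallel. General type is excluded: for $g=4$ it contradicts Remark \ref{general type} ($e(S)=1<3$), while for $g=3$ one pushes the minimal model through Noether's formula and the Bogomolov--Miyaoka--Yau inequality to force $S$ minimal with $K_S^2=9$ and $\chi(\mathcal O_S)=1$, whereupon the Hodge index theorem gives $(K_SH_S)^2\geq K_S^2H_S^2 = 9d\geq 27$, contradicting $K_SH_S=4-d\leq 1$ (alternatively one cites \cite[Proposition 2]{AP}). The case $\kappa(S)=1$ is also excluded: here $S$ is non-minimal with minimal model a properly elliptic surface of $\chi=0$; if its fibration carries a multiple fiber we contradict \cite[Theorem]{LM4}, and otherwise $K_{S_0}\equiv (2g(B)-2)F$ with $g(B)\geq 2$, and writing $H_S=\sigma^*H_0-\sum a_jE_j$ and using (as in the proof of Theorem \ref{thm basic3}) that $\sigma^*F_0-E$ is effective forces $K_SH_S\geq 5 > 6-d$, a contradiction. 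Hence $\kappa(S)=0$, so $S$ is an abelian or bielliptic surface $S_0$ (the only minimal $\kappa=0$ surfaces with $e(S_0)=0$) blown up at $e(S)$ points. For $g=3$ this means three points, whence $K_SH_S=\sum a_j\geq 3 > 4-d$, impossible; therefore $g=4$, and $S$ is $S_0$ blown up at a single point $p$, with $K_S\equiv E$ and $a:=H_SE = 6-d$.

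It remains to determine $d$, writing $H_S=\sigma^*A-aE$ with $A:=\sigma_*H_S$ ample on $S_0$, so that $A^2=d+a^2$ and $a\in\{1,2,3\}$. This is the \emph{main obstacle}: using the spannedness hypothesis \eqref{S} to exclude $a=3$ (i.e. $d=3$) and $a=1$ (i.e. $d=5$), leaving $a=2$, $d=4$, $A^2=8$. By Riemann--Roch and Kodaira vanishing on $S_0$ one has $h^0(A)=\tfrac12 A^2$, and an ample spanned bundle satisfies $h^0(H_S)\geq 3$. For $a=1$ this gives $h^0(A)=3$ and $h^0(H_S)=h^0(A\otimes\mathfrak m_p)=2<3$ (using base-point-freeness of ample bundles of this degree on abelian and bielliptic surfaces from \cite{LP2}, \cite{LL}), a contradiction. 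For $a=3$ one has $h^0(A)=6$ and $h^0(H_S)=h^0(A\otimes\mathfrak m_p^3)$; the three conditions $h^0(H_S)\geq 3$ and the requirement that $\sigma^*A-3E$ be base point free along $E$ are incompatible. The surviving case $a=2$ is exactly where $\sigma^*A-2E$ is ample and spanned, which is recorded as $p\in\mathcal J_2(S_0,A)$; this delicate jet-theoretic bookkeeping is where the intended correction to \cite{LP2} enters.

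Finally I would lift and read off $H$. Since $S_0$ is a non-K$3$ minimal surface of Kodaira dimension $0$ occurring, after a single blow-down, as the zero locus of $\mathcal F=\mathcal E$, \cite[Theorem]{L2} yields $X=\mathbb P_{S_0}(\mathcal V)$ with $\pi|_S:S\to S_0$ the contraction of $E$, $\mathcal E=\pi^*\mathcal G\otimes\xi$ and $\mathcal E_F=\mathcal O_F(1)^{\oplus(n-2)}$. As $H_F=\mathcal O_F(a)=\mathcal O_F(2)$, necessarily $r=n-2$ (else $H$ would be a summand of $\mathcal F$ with $H_F=\mathcal O_F(1)$) and $H=2\xi+\pi^*N$ for some $N\in\operatorname{Pic}(S_0)$. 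To identify $N$ I would compute $\xi|_S$ by adjunction: from $K_X=-(n-1)\xi+\pi^*(\det\mathcal V+K_{S_0})$ and $\det\mathcal E=\pi^*\det\mathcal G+(n-2)\xi$ one gets $K_S=(K_X+\det\mathcal E)|_S=-\xi|_S+\sigma^*(\det\mathcal V+\det\mathcal G)$, and since $K_S\equiv E$ this gives $\xi|_S\equiv\sigma^*(\det\mathcal V+\det\mathcal G)-E$. Imposing $H_S=2\xi|_S+\sigma^*N=\sigma^*A-2E$ then forces $N=A-2(\det\mathcal V+\det\mathcal G)$, which completes the stated description.
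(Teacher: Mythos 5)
Your proposal follows the same skeleton as the paper's proof: use $(\#)$ to rewrite $\delta=2g+5$ as $e(S)=9-2g$, exclude $g\leq 3$, land on $(e(S),g)=(1,4)$ with $S$ the blow-up $\sigma:S\to S_0$ of an abelian or bielliptic surface at one point, identify $H_S=\sigma^*A-2E$ with $A^2=8$ and $p\in\mathcal J_2(S_0,A)$, and finally lift through \cite[Theorem]{L2}; your last paragraph (adjunction to compute $\xi_S$, exclusion of $r<n-2$, Lefschetz--Sommese to pin down $H$) is essentially the paper's argument. Some of your numerical exclusions take mildly different but workable routes (Bogomolov--Miyaoka--Yau plus Hodge index for general type, Theorem \ref{2g+d} to get $d\leq 5$).

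However, there is a genuine gap exactly at the step you yourself call the main obstacle: excluding $a=1$ (i.e.\ $d=5$) and $a=3$ (i.e.\ $d=3$). For $a=1$ you argue $h^0(H_S)=h^0(A\otimes\mathfrak m_p)=h^0(A)-1=2<3$, which presupposes that $p$ is not a base point of $|A|$, and the ``base-point-freeness of ample bundles of this degree on abelian and bielliptic surfaces'' that you invoke is false: on a product $S_0=E_1\times E_2$ of elliptic curves the ample line bundle $A=p_1^*\mathcal O_{E_1}(o_1)+p_2^*\mathcal O_{E_2}(3o_2)$ has $A^2=6$, $h^0(A)=3$, and every member of $|A|$ contains the curve $\{o_1\}\times E_2$. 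When $p$ lies on such a base curve one has $h^0(H_S)=3$, and ruling this out requires showing that spannedness of $H_S$ then fails (the base curve survives in $|H_S|$) --- an analysis of the structure of $|A|$ on abelian and bielliptic surfaces that your proposal does not contain. For $a=3$ you simply assert that $h^0(H_S)\geq 3$ and spannedness of $\sigma^*A-3E$ along $E$ ``are incompatible'' with no argument; that assertion is precisely the hard content. The paper closes both cases by invoking the classification results of \cite{LP2} (Propositions 1.4, 1.6, 2.3, 3.1, Lemma 2.1, Proposition 4.3), which treat ample-and-spanned genus-four polarized surfaces of non-negative Kodaira dimension degree by degree and show directly that $e(S)=1$ forces $d=4$ and $h^0(H_S)=3$; without those results, or a real substitute for them, your determination of $d$ does not close. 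A smaller point: for the surviving case $a=2$ you say the spannedness of $H_S$ ``is recorded as'' $p\in\mathcal J_2(S_0,A)$, but this should be derived, as the paper does: $A$ is spanned by Reider, so the double point at $p$ imposes $\sharp\geq 1$ conditions, while $h^0(H_S)=h^0(A)-\sharp=4-\sharp\geq 3$ forces $\sharp=1$, whence $|A-2p|=|A-p|$, which is the definition of $p\in\mathcal J_2(S_0,A)$.
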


For the definition of the jumping sets of an ample and spanned line bundle we refer to \cite{LPS}.

\begin{proof}
In view of ($\#$), the relation $\delta=m_2-d=2g+5$ converts into
\begin{equation}\label{2g+5}
e(S)+2(g-3)=3.
\end{equation}
We have $e(S) \geq 0$ by the Castelnuovo--de Franchis Theorem \cite[Theorem X.4]{B1}, hence $g \leq 4$.
Clearly it cannot be $g \leq 1$, since $S$ is not a ruled surface. Moreover, for $g=2$, the only pair
$(S,H_S)$ with $S$ a not ruled surface is the K3 double plane, according to the classification in
\cite[Theorem 3.1]{LP}, but in this case $e(S)=24$,
which contradicts (\ref{2g+5}).

Suppose that $g=3$; then $e(S)=3$ by (\ref{2g+5}). Clearly $H_S^2 \geq 2$ and taking into account Lemma \ref{doublecovers}
we see that $e(S)=3$ is not compatible with $H_S^2=2$. Thus the genus formula, combined with the fact that $S$ is not a ruled surface, implies
$(H_S^2, H_SK_S) = (4,0)$, or $(3,1)$. In the former case $K_S$ is numerically trivial, hence $S$ is a minimal surface with Kodaira dimension $\kappa(S)=0$,
but this contradicts $e(S)=3$. In the latter case the Hodge index theorem shows that $K_S^2 \leq 0$. Suppose that $S$ is minimal.
Thus $K_S^2=0$, since $\kappa(S) \geq 0$, but then Noether's formula contradicts $e(S)=3$ again. Therefore $S$ is not minimal.
Let $\eta:S \to S_0$ be a birational morphism to the minimal model. We know that $K_S = \eta^* K_{S_0} + E$, where $E$ is an
effective divisor contracted by $\eta$ to a finite set. Consider the equality $1 = H_S K_S = H_S \eta^* K_{S_0} + H_S E$:
the second summand on the right hand is greater than or equal to the number of blowing-ups $\eta$ factors through;
on the other hand, the first one is non-negative and it is zero if and only if
$K_{S_0}$ is numerically trivial. It follows that $S$ is $S_0$ blown up at a single point, $E$ being the corresponding exceptional curve, and $\kappa(S)=0$.
But then $e(S)=e(S_0)+1\not= 3$, a contradiction. Thus $g=3$ cannot occur as well.

It remains to consider the case $(e(S),g) = (1,4)$. Clearly $H_S^2 \geq 2$ and by
Lemma \ref{doublecovers} we see that
condition $e(S)=1$ is not compatible with $H_S^2=2$, as before. Thus the genus formula, combined with the fact that $S$ is not a ruled surface, implies
$3 \leq H_S^2 \leq 6$. A close inspection of \cite{LP2} shows that it cannot be $e(S)=1$ if $H_S^2 = 3,5$ or $6$.
Actually, as observed before, $S$ cannot be of general type, hence it has Kodaira dimension $\kappa(S)=0$ or $1$.
According to \cite[Proposition 1.4, Lemma 2.1 combined with Proposition 2.3, and Proposition 3.1]{LP2}, we see that
condition $e(S)=1$ would be contradicted. So, $H_S^2=4$. Now, from \cite[Proposition 1.6]{LP2} we easily see that
$e(S)=1$ can occur only when $S$ is a $4$-tuple cover of $\mathbb P^2$ via $|H_S|$, i.e. $h^0(H_S)=3$.
Clearly condition $e(S)=1$ prevents $S$ from being a minimal surface.
Thus, if $\kappa(S)=1$ \cite[Proposition 4.3]{LP2}
would imply that $(S,H_S)$ is obtained by blowing up a single point on a minimal elliptic surface with $q=0$.
Thus $\chi(\mathcal O_S) \geq 1$.
By Noether's formula we have $K_S^2 + 1 = K_S^2 + e(S) \geq 12$, hence $K_S^2 \geq 11$. But this is not compatible
with $\kappa(S)=1$.
This check settles all possibilities,
except when $\kappa(S)=0$ and $H_S^2 = 4$, in which case $H_S K_S=2$ by genus formula.
Let $\eta:S \to S_0$ be a birational morphism from $S$ to its minimal model $S_0$. Since $e(S)=1$,
$\eta$ is simply the blowing-up at a point $p \in S_0$; in particular, we get $e(S_0)=0$, hence the surface $S_0$
is either abelian or bielliptic. From $2=K_S H_S = (\eta^*K_{S_0} + E) H_S$, where $E=\eta^{-1}(p)$ is the
exceptional curve, we see that $H_S E=2$, $K_{S_0}$ being numerically trivial;
hence $H_S = \eta^*A - 2E$, where $A$ is an ample line bundle on $S_0$, and $4 = H_S^2 = A^2 - 4$, i.e.
$A^2=8$. Thus
$$h^0(A) = \chi(\mathcal O_{S_0}) + \frac{1}{2}(A^2 - AK_{S_0}) = 4,$$
by the Riemann--Roch and the Kodaira vanishing theorems.
Moreover, since $A^2=8$ it follows from Reider's Theorem \cite[Theorem 1]{R} that $A$ is also a spanned line bundle.
According to the above, $|H_S|$ is in bijection with the linear system $|A-2p|$ of divisors in $|A|$
having a double point at $p$. Recalling that $h^0(H_S)=3$, this shows that
$$3 = h^0(H_S) = h^0(A) - \sharp = 4 - \sharp ,$$
where $\sharp$ stands for the number of linearly independent linear conditions to be imposed
on the elements of $|A|$ in order to have a double point at $p$.
Therefore $\sharp = 1$. This says that
$\text{codim}_{|A|}(|A-2p|) = 1$. Thus, the spannedness of $A$ implies that
$|A-2p| = |A-p|$, i.e. the point $p$ is in the second jumping set $\mathcal J_2(S_0,A)$.
Now come back to the ample vector bundle setting.
By using \cite[Theorem]{L2}, we conclude that $X$ is as in the statement with $\mathcal F =  \pi^* \mathcal G \otimes \xi$,
where $\xi$ is the tautological line bundle on $X$, $\mathcal G$ is a vector bundle of rank $n-2$ on $S_0$ and
$\pi:X \to S_0$ is the bundle projection; moreover, $\pi|_{S}: S \to S_0$ is just the birational morphism $\eta$ expressing
$S$ as $S_0$ blown up the single point $p$.
Now consider $H$. If $r < n-2$, then $H_F$ is a summand of $\mathcal F_F$, hence $H_F = \mathcal O_{\mathbb P^{n-2}}(1)$.
Since $E$ is contained in a fiber $F$ of $\pi$
and is a line with respect to $\xi_F$, we get the contradiction $1 = \deg (H_F)_E = H_S E = 2$.
Therefore $r=n-2$, hence $\mathcal F = \mathcal E$ and $S=Z$, so that $Z$ is as in the statement. Since $H$ is ample we have
$H_F = t \xi _F = \mathcal O_{\mathbb P^{n-2}}(t)$ for some positive integer $t$,
and then we see from the equality $t = \deg (H_F)_E = H_S E = 2$ that $H_F = 2 \xi_F$. So, $H = 2\xi + \pi^* \mathcal M$
for some line bundle $\mathcal M$ on $S_0$, which we have to determine.
Recall that $H_S = \pi|_S^* A - 2E$, where
$A$ is the ample and spanned line bundle on $S_0$ with $A^2=8$ we met before.
Now, by adjunction $K_S = (K_X+\det \mathcal F)_S$ and then by the canonical bundle formula we get
$$K_S=\big(-(n-1)\xi + \pi^*(K_{S_0}+\det \mathcal V) + (n-2)\xi + \pi^* \det \mathcal G \big)_S=-\xi_S + \pi|_S^*(K_{S_0}+ \det \mathcal V + \det \mathcal G).$$
On the other hand, $K_S = \pi|_S^* K_{S_0}+E$, which provides the expression of $E$; hence
$$H_S = \pi|_S^*A - 2E
= \pi|_S^*A - 2\big(-\xi_S + \pi|_S^* (\det \mathcal V + \det \mathcal G) \big)
= 2\xi_S + \pi|_S^*\big( A - 2(\det \mathcal V + \det \mathcal G) \big).$$
Finally, from the injectivity of the restriction homomorphism
$\text{Pic}(X) \to \text{Pic}(S)$ (Lefschetz--Sommese Theorem), we get the
expression of $H$ as in the statement.
\end{proof}

\smallskip

\begin{rem} \label{remark 4.6}
{\em i) We want to stress that Theorem \ref{main} is effective. To see this it is enough to modify the example
produced in Remark \ref{Rem-example}, as follows.
Let $X$ be the Jacobian bundle $\pi: \mathbb P(\mathcal E_{n-1}(C,o))
\to S_0=J(C)$ on the jacobian of a smooth curve $C$ of genus $2$ again, and call $\xi$ the tautological line bundle.
Letting $\mathcal E= \xi^{\oplus (n-2)}$ and taking $H=2\xi$, we see that $H$ is ample
and spanned \cite[Example 5.1]{FL},
and the triplet $(X,\mathcal E, H)$ is as in Theorem \ref{main}:
here $S=C^{(2)}$ again, but
$A$ is the line bundle corresponding to the double of the curve $C$ itself embedded in its jacobian.
Unfortunately, we have no examples with $S_0$ a bielliptic surface.}

\smallskip

\noindent {\em ii) According to the discussion in the first part of the
proof we have to stress a gap affecting the proof of
\cite[Proposition 4.5]{LP2}. Actually, the equality in the first
case of (4.5.1) of \cite[p.\ 101]{LP2} holds provided that the
point $p$ does not belong to the first jumping set of $L'$ (see
\cite[$\S 1$]{LPS}): to wit, set $\mathcal J_i:= \mathcal
J_i(X',L')$, for $i=0,1,2$, where $\mathcal J_0 = X \setminus
\mathcal J_1$; using the same notation as there, if $L'$ is
spanned then the mentioned equality has to be amended as follows:
$h^0(L')=h^0(L)+3-i$, where $p \in \mathcal J_i$. As a
consequence, pairs $(X',L')$ with $X'$ an abelian or a bielliptic
surface when $p \in \mathcal J_2$ and with $X'$ an Enriques
surface when $p \in \mathcal J_1 \setminus \mathcal J_2$ are not
ruled out.}
\end{rem}

\medskip

\subsection{Revisiting the classical setting}

As a consequence of Remark \ref{veryample}, revisiting Theorem \ref{basic3} and Corollary \ref{basic4}, we obtain the following two results.

\begin{cor}
Let $(X, \mathcal{E}, H)$ be as in \eqref{0}, suppose
that condition \eqref{VA} holds, and let $\delta\geq 4$. Then
$$\delta\geq 9,$$
except in the following cases:
\begin{enumerate}
\item $\delta =4$ and $(X, \mathcal E, H)$ is as in cases $(1)$ and $(2)$ of {\em Theorem \ref{basic2}} $(m_2=12)$;
\item $\delta = 6, 7$ and $(X, \mathcal E, H)$ is as in cases $(2)$ and $(3)$ of {\em Theorem \ref{basic3}}, respectively $(m_2=12)$;
\item $\delta = 8$ and $(X, \mathcal E, H)$ is as in cases $(4)(a),(b),(c)$ $(m_2=12)$ and cases $(4)(d_1)$ $(m_2=20)$ and $(4)(d_2)$ with $e=-1$ of {\em Theorem \ref{basic3}} $(m_2=16)$.
\end{enumerate}
\end{cor}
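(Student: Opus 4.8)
The plan is to derive this corollary directly from Theorem~\ref{basic3}, restricting its conclusion by means of Remark~\ref{veryample}. Since condition \eqref{VA} (that $H_S$ be very ample) implies condition \eqref{S} (that $H_S$ be spanned), Theorem~\ref{basic3} applies without change: for $\delta\geq 4$ one has $\delta\geq 9$ unless $(X,\mathcal E,H)$ is one of the triplets listed in its cases $(1)$--$(4)$. Hence the whole argument reduces to a case-by-case inspection deciding which of those triplets is still compatible with the stronger requirement that $H_S$ be very ample, the criterion being provided by Remark~\ref{veryample}: when $H_S$ is very ample, the associated polarized surface $(S,H_S)$ must occupy one of the lines N.\ 1--6, 13, 21, 25, 27, 30, and 31 with $e=-1$ of Table~\ref{Table1}.

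First I would handle the low values $\delta=4,5$. For $\delta=4$ the two del Pezzo triplets coming from cases $(1)$ and $(2)$ of Theorem~\ref{basic2} correspond to $(S,H_S)=(\mathbb P^1\times\mathbb P^1,\mathcal O(2,2))$, i.e.\ line N.\ 5, which survives; by contrast the geometrically ruled triplet with $(S,H_S)\cong(S_{1,-1},[2C_0])$ is line N.\ 8, absent from the very-ample list, and is therefore discarded. This leaves exactly cases $(1)$ and $(2)$ of Theorem~\ref{basic2}. The value $\delta=5$ is already ruled out by Theorem~\ref{basic3} itself, so nothing survives there.

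Next I would treat $\delta=6,7$. The triplet of case $(2)$ of Theorem~\ref{basic3} has $S$ a del Pezzo surface of degree $6$ with $H_S=-K_S$, i.e.\ line N.\ 21, while the two triplets of case $(3)$ give $(S,H_S)$ on line N.\ 25 (del Pezzo of degree $5$). Both lines survive, so cases $(2)$ and $(3)$ of Theorem~\ref{basic3} are retained unchanged. Finally, for $\delta=8$ I would run through the four families of case $(4)$: the triplets $(a)$, $(b)$, $(c)$ all share $(S,H_S)$ on line N.\ 27 (del Pezzo of degree $4$), which survives, whereas the families $(d_1)$, $(d_2)$, $(d_3)$ match lines N.\ 30, N.\ 31, and N.\ 34 respectively. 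By Remark~\ref{veryample}, N.\ 30 survives for every admissible $e$, N.\ 31 only for $e=-1$, and N.\ 34 not at all. Thus one keeps $(4)(a),(b),(c)$ and $(4)(d_1)$ in full, $(4)(d_2)$ only with $e=-1$, and discards $(4)(d_3)$ entirely, which is exactly item $(3)$ of the statement.

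The argument is essentially bookkeeping: no new geometric input beyond Theorem~\ref{basic3} and Remark~\ref{veryample} is required. The only point demanding genuine care is this last step, where one must correctly identify each geometrically ruled family of case $(4)$ with its Table~\ref{Table1} line and, for $(d_2)$, isolate the single admissible invariant $e=-1$; a misalignment here would spuriously keep or drop a family. Matching the del Pezzo cases is immediate once the anticanonical identification $H_S=-K_S$ and the degree $H_S^2$ are read off.
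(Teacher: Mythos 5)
Your proposal is correct and follows exactly the paper's own route: the corollary is stated there as an immediate consequence of Theorem \ref{basic3} (which applies since \eqref{VA} implies \eqref{S}) filtered through Remark \ref{veryample}, and your identifications of each surviving family with its line of Table \ref{Table1} (N.\ 5 for $\delta=4$; N.\ 21, 25 for $\delta=6,7$; N.\ 27, 30, and 31 with $e=-1$ for $\delta=8$, discarding N.\ 8 and N.\ 34) are all accurate. Nothing is missing.
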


\begin{cor}
Let $(X, \mathcal{E}, H)$ be as in \eqref{0} and suppose that
condition
\eqref{VA} holds. Then
$$m_2\leq 11$$ if and only if $(X,\mathcal
E,H)\in\mathcal{A}$\ $(m_{2}=0,3,2,2,d\leq 11)$.
\end{cor}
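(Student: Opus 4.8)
The plan is to obtain this statement as a direct refinement of Corollary \ref{basic4}, exploiting the fact that the very ampleness hypothesis \eqref{VA} is strictly stronger than the spannedness hypothesis \eqref{S}. First I would note that \eqref{VA} implies \eqref{S}, so that Corollary \ref{basic4} applies verbatim: it asserts that $m_2 \leq 11$ forces $(X,\mathcal E,H)$ to be either a member of $\mathcal A$ or the single further triplet with $r = n-2$, $X$ a $\mathbb P^{n-1}$-bundle over a smooth elliptic curve $B$, $\mathcal E_F = \mathcal O_{\mathbb P^{n-1}}(1)^{\oplus(n-2)}$, $H_F = \mathcal O_{\mathbb P^{n-1}}(2)$, and $(S,H_S) \cong (S_{1,-1},[2C_0])$ with $m_2 = 8$. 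The task is therefore reduced to discarding this last triplet once $H_S$ is assumed to be very ample.

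The key step is to locate $(S_{1,-1},[2C_0])$ inside Table \ref{Table1}. Since $2C_0 + (e+1)f \equiv 2C_0$ precisely when $e = -1$, this pair is exactly entry N.\ 8 of the table with $e = -1$. By Remark \ref{veryample}, the only entries of Table \ref{Table1} compatible with a very ample polarization are N.\ 1--6, 13, 21, 25, 27, 30 and 31 with $e = -1$, and N.\ 8 is absent from this list. Consequently $H_S$ cannot be very ample on $S_{1,-1}$ when $H_S \equiv 2C_0$, so the additional triplet is ruled out, leaving $(X,\mathcal E,H) \in \mathcal A$.

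For the reverse implication I would simply invoke the values of $m_2$ recorded in Theorem \ref{basic1}: the first four members of $\mathcal A$ satisfy $m_2 = 0, 3, 2, 2$, all at most $11$, while the scroll triplet has $m_2 = d$, so that there $m_2 \leq 11$ is equivalent to $d \leq 11$. This reproduces the parenthetical list and closes the equivalence.

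I do not anticipate a genuine obstacle: the whole argument is the specialisation of Corollary \ref{basic4} to the very ample case, and the only substantive move is the identification of the leftover pair $(S_{1,-1},[2C_0])$ with a table entry excluded by Remark \ref{veryample}. The one point deserving care is the normalisation $[2C_0] = [2C_0+(e+1)f]$ at $e = -1$, which must be verified so that the appeal to the list of surviving entries in Remark \ref{veryample} is legitimate.
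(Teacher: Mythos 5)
Your proposal is correct and follows essentially the same route as the paper, which states this corollary precisely as a consequence of Remark \ref{veryample} applied to Corollary \ref{basic4}: the only extra triplet allowed under \eqref{S}, namely the one with $(S,H_S)\cong(S_{1,-1},[2C_0])$, is entry N.~8 of Table \ref{Table1} with $e=-1$, which Remark \ref{veryample} excludes once $H_S$ is very ample. Your handling of the converse via the $m_2$ values in Theorem \ref{basic1} (with $d\leq 11$ in the scroll case) matches the intended reading of the statement.
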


Recently, Fukuma \cite{Fu} improved a result of the first author, showing the following
\begin{prop} \label{Fukuma}
Let $S$ be a smooth surface endowed with a very ample line bundle $L$, and let $d, g, m$ be the degree, the sectional genus and the class of $(S,L)$. Suppose that $m > d$ and $g \geq 2$. Then $m \geq d + 2g+2$ and equality holds if and only if
$(S,L)=(S_{1,-1},[2C_0+f])$ (in which case $d=8$, $g=3$).
\end{prop}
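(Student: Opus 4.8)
The plan is to convert the asserted bound into a constraint on the topological Euler number via \eqref{formula}, and then to prune the relevant classification lists, discarding everything incompatible with very ampleness. Writing $m-d=e(S)+4(g-1)$, the inequality $m\ge d+2g+2$ is equivalent to $e(S)\ge 6-2g$. Since $L$ is very ample, $|L|$ contains a smooth curve, so Theorem \ref{prop 2g+1, 2g+2} applies; moreover \cite[Proposition 3.2]{PTu} already yields $m-d\ge 2g$. Hence it suffices to exclude the values $m-d=2g$ and $m-d=2g+1$ for very ample $L$ with $g\ge 2$, and to prove that among the pairs with $m-d=2g+2$ only $(S_{1,-1},[2C_0+f])$ is very ample. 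I would organize the discussion according to whether $S$ is ruled.

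If $S$ is not ruled I expect a clean contradiction with all three borderline values. By Proposition \ref{S not ruled} (recall that $m>d$) one has $m-d\ge 2g+d$, the only exceptions being abelian or bielliptic surfaces, where $m-d=2g+d-2$, and elliptic quasi-bundles, where $m-d=2g+d-1$. Now a very ample $L$ with $g\ge 2$ forces $d\ge 4$, since a surface of degree at most $3$ has sectional genus at most $1$. Thus in the generic case $m-d\ge 2g+4$, and for elliptic quasi-bundles $m-d\ge 2g+3$; for abelian or bielliptic surfaces $\chi(\mathcal O_S)=0$ gives $h^0(L)=\tfrac12 d$ by Riemann--Roch and Kodaira vanishing, so an embedding by $|L|$ forces $d$ to be large (at least $8$), whence again $m-d>2g+2$. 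In every non-ruled subcase the inequality is therefore strict, so such surfaces contribute neither to the bound nor to the equality case.

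The substance lies in the ruled case, where $e(S)$ may be negative and the extremal pairs genuinely approach the bound. Here I would read off the complete lists: the equality cases of $m-d=2g$ from \cite[Proposition 3.2]{PTu}, and the families with $m-d=2g+1$ (cases $(\alpha),(\beta)$) and $m-d=2g+2$ (cases $(\gamma),(\delta_1),(\delta_2)$) from Theorem \ref{prop 2g+1, 2g+2}, and then discard those that cannot be very ample. For the finitely many pairs recorded in Table \ref{Table1} (precisely those with $g\le 3$) this is immediate from Remark \ref{veryample}: no pair with $m-d\in\{2g,2g+1\}$ and $g\ge 2$ survives, while the unique survivor with $m-d=2g+2$ and $g\ge 2$ is entry N.$31$ with $e=-1$, i.e.\ $(S_{1,-1},[2C_0+f])$, for which $d=8$ and $g=3$. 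For the infinite families, namely $(S_{q,e},[2C_0+(e+1)f])$ with $g=2q$, their one-point blow-ups in $(\beta)$, and $(\delta_1),(\delta_2)$, one has $L\cdot f=2$ and a minimal section $C_0$ of genus $q$ along which $L$ restricts with small degree ($L\cdot C_0=1-e$, $2-e$, or $2$); combined with $e\ge -q$ this degree is incompatible with $L|_{C_0}$ being very ample on a curve of genus $q\ge 2$ (the subtle point being addressed below), contradicting the very ampleness of $L$. This pruning leaves exactly $(S_{1,-1},[2C_0+f])$, which is very ample by Remark \ref{veryample}, and feeding its invariants into \eqref{formula} confirms $m-d=8=2g+2$.

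The delicate point is precisely this last elimination of the infinite ruled families. A crude degree count on $C_0$ does not suffice, since a very ample line bundle on a curve of genus $q$ need not have degree at least $2q+1$ (for instance the canonical bundle of a plane quartic has degree $2q-2$). Thus for $q=2$ the degrees above, all at most $3$, are at once too small, but for $q\ge 3$ one must verify that $L|_{C_0}$ is forced to be special of a type with $h^0\le 2$, hence non-embedding; the cleanest route is either to combine $e\ge -q$ with the adjunction computation of $L|_{C_0}$ and Clifford's theorem, or simply to invoke the known very ampleness criteria for line bundles on geometrically ruled surfaces. Once these families are excluded, \eqref{formula} together with the residual equality case delivers both the bound $m\ge d+2g+2$ and the uniqueness of $(S_{1,-1},[2C_0+f])$.
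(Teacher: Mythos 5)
The paper contains no proof of Proposition \ref{Fukuma} to compare against: the result is imported verbatim from Fukuma \cite{Fu}, so your argument must stand on its own. Much of it does: the translation via \eqref{formula}; the non-ruled case (Proposition \ref{S not ruled}, together with the observations that very ampleness plus $g\geq 2$ forces $d\geq 4$, and that $h^0(L)=d/2$ on abelian or bielliptic surfaces kills the residual case $d=4$); and the pruning of the Table \ref{Table1} entries by Remark \ref{veryample}, which correctly isolates N.\,31 with $e=-1$, i.e.\ $(S_{1,-1},[2C_0+f])$, as the unique equality survivor with $g\geq 2$.

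The gap sits exactly at the step you flag as delicate, and neither of your proposed repairs closes it. Restriction to $C_0$ plus Clifford cannot work: in the family $(S_{q,e},\,L\equiv[2C_0+(e+1)f])$ take $q=3$, $e=-3$; then $\deg L|_{C_0}=1-e=4=2q-2$, and since the family is prescribed only up to numerical equivalence, nothing forbids $L|_{C_0}$ from being the canonical bundle of a non-hyperelliptic genus-$3$ curve --- special, very ample, $h^0=3$ --- which is precisely the example you yourself cite as a warning (similarly, $(\delta_1)$ with $q=4$, $e=-4$ allows $\deg L|_{C_0}=6=2q-2$). So the assertion that $L|_{C_0}$ ``is forced to be special with $h^0\le 2$'' is false for infinitely many members of these families, no intrinsic curve theory on $C_0$ can decide the matter, and there is no purely numerical very-ampleness criterion on ruled surfaces of genus $q\geq 2$ to invoke instead. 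The fix, however, is an argument you already deployed in the non-ruled case: every offending family --- $(S_{q,e},[2C_0+(e+1)f])$ with $q\geq 3$ ($d=4$, $g=2q$), its one-point blow-ups in case $(\beta)$ ($d=3$), and $(\delta_1),(\delta_2)$ ($d=4$ and $3$, $g\geq 5$) --- has $d\leq 4$ while $g\geq 4$, whereas a very ample line bundle with $L^2\leq 4$ has sectional genus at most $3$: the general hyperplane section is a smooth nondegenerate curve of degree at most $4$ in $\mathbb{P}^{h^0(L)-2}$, hence either a plane curve of degree at most $4$ (so $g\leq 3$) or, by Castelnuovo's bound, a curve of genus at most $1$. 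Replacing the $C_0$-restriction argument by this global degree--genus incompatibility makes your proof complete.
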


Note that the above pair $(S,L)$ corresponds to N. 26
with $e=-1$ in Table \ref{Table1}. In particular, it fits into case (B)$(\gamma)$ with $(m-d,g)=(8,3)$ of Theorem \ref{prop 2g+1, 2g+2}.
Coming back to triplets as in (\ref{0}), observe that for $g:=g(S,H_S)\leq 1$, condition $\delta\leq 2g+2$ simply means
$\delta\leq 4$. Then taking into account Theorems \ref{basic1}, \ref{basic2} and \ref{basic3}, the very ampleness of $H_S$ implies that $(X, \mathcal{E},H)\in\mathcal{A}\cup \mathcal{B}$. Thus we can assume that $g\geq 2$ and so Proposition \ref{Fukuma} can be easily lifted to the ample vector bundle setting, as follows.

\begin{prop} \label{AVB}
Let $(X, \mathcal{E},H)$ be as in \eqref{0} and suppose that
\eqref{VA} holds.
Assume $g:=g(S,H_S)\geq 2$ and $\delta > 0$. Then $$\delta \geq 2g+2$$ and equality holds if and only if
$r=n-2$,
$X$ is a $\mathbb P^{n-1}$-bundle over an elliptic curve $B$, $\mathcal E_F =\mathcal O_{\mathbb P^{n-1}}(1)^{\oplus (n-2)}$,
$H_F = \mathcal O_{\mathbb P^{n-1}}(2)$ for every
fiber $F \cong \mathbb P^{n-1}$ of the projection $X \to B$, and $(S,H_S)$ is the pair $(S,L)$ described in {\em Proposition \ref{Fukuma}}.
\end{prop}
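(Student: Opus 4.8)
The plan is to push everything down to the surface $(S,H_S)$ and then lift the answer back up. First I would invoke Proposition \ref{proposition} to identify $m_2 = c_2(J_1(H_S)) =: m$, the generalized class of the polarized surface $(S,H_S)$, so that $\delta = m_2 - d = m - d$ is precisely the quantity ``$m-d$'' attached to $(S,H_S)$. Under the standing hypotheses the surface $S$ is smooth, $H_S$ is very ample by \eqref{VA}, $g=g(S,H_S)\geq 2$, and $\delta>0$ translates into $m>d$. These are exactly the assumptions of Proposition \ref{Fukuma}, which applied to $(S,H_S)$ yields at once $m-d\geq 2g+2$, i.e. $\delta\geq 2g+2$. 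This disposes of the inequality with no further work.

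For the forward direction of the equality statement, Proposition \ref{Fukuma} also records that $\delta = 2g+2$ forces $(S,H_S)=(S_{1,-1},[2C_0+f])$, with $d=8$ and $g=3$. Here $S=S_{1,-1}$ is geometrically ruled over its base curve $B$, which has genus $q=1$ and is therefore elliptic; in particular $S$ is a $\mathbb P^1$-bundle over the positive-genus curve $B$. This is the precise hypothesis of Lemma \ref{lemma 2}, so $X$ is a $\mathbb P^{n-1}$-bundle over $B$ with $\mathcal F_F=\mathcal O_{\mathbb P^{n-1}}(1)^{\oplus(n-2)}$ on every fiber $F$. To determine $r$ and $H$ I would compute $H_S\cdot f=(2C_0+f)\cdot f=2\neq 1$, so $(S,H_S)$ is not a scroll; the alternative in Lemma \ref{lemma 2} then excludes the case $r<n-2$ and forces $r=n-2$ together with $H_F=\mathcal O_{\mathbb P^{n-1}}(t)$, $t=H_S f=2$. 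Since $r=n-2$ we have $\mathcal E=\mathcal F$, hence $\mathcal E_F=\mathcal O_{\mathbb P^{n-1}}(1)^{\oplus(n-2)}$, and all the structural assertions of the statement follow.

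The converse direction is immediate: if all the listed conditions hold, then in particular $(S,H_S)=(S_{1,-1},[2C_0+f])$, so by Proposition \ref{proposition} together with formula \eqref{formula}, using $e(S)=0$ (as $S$ is geometrically ruled over an elliptic curve) and $g=3$, one gets $\delta=m-d=e(S)+4(g-1)=8=2g+2$.

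Overall the argument is short and largely mechanical once Proposition \ref{Fukuma} and Lemma \ref{lemma 2} are available, since the surface-level classification has already been carried out by Fukuma. The only point requiring a moment's attention, and the mild obstacle in the proof, is the scroll/non-scroll dichotomy of Lemma \ref{lemma 2}: one must verify the numerical identity $H_S f=2$ in order to exclude the scroll possibility and thereby conclude both $r=n-2$ and $H_F=\mathcal O_{\mathbb P^{n-1}}(2)$. No genuinely hard step arises.
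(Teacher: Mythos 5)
Your proposal is correct and follows essentially the same route as the paper's proof: apply Proposition \ref{Fukuma} to $(S,H_S)$ for the inequality and the identification $(S,H_S)=(S_{1,-1},[2C_0+f])$, then use the $\mathbb P^1$-bundle structure over the elliptic base (via Lemma \ref{lemma 2}, which is the paper's packaging of the reference it cites directly) to get the $\mathbb P^{n-1}$-bundle structure on $X$, exclude $r<n-2$ because $H_Sf=2$ rules out the scroll case, and conclude $r=n-2$, $H_F=\mathcal O_{\mathbb P^{n-1}}(2)$. Your explicit verification of the converse via \eqref{formula} merely spells out what the paper dismisses as obvious.
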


\begin{proof}
Note that $(S,H_S)$ satisfies all the
assumptions of Proposition \ref{Fukuma} with $L=H_S$.
This implies the claimed inequality. Now suppose that equality holds; then $(S,H_S)$
is the pair $(S,L)$ described in Proposition \ref{Fukuma}. Set $\mathcal F = \mathcal E \oplus H^{\oplus (n-r-2)}$.
Since $S$ is a $\mathbb P^1$-bundle over an
elliptic curve, say $B$, we can conclude by \cite{LM2} that $X$ is a $\mathbb P^{n-1}$-bundle over
$B$, the projection $p: X \to B$ inducing the ruling of $S$, and
$\mathcal F_F =\mathcal O_{\mathbb P^{n-1}}(1)^{\oplus (n-2)}$ for every fiber
$F$. It cannot be $r < n-2$, since $(S,H_S)$ is not a scroll. Therefore $r=n-2$, and then
$H_F = \mathcal O_{\mathbb P^{n-1}}(2)$, since $H_S f = (2C_0+f) f =2$.
The converse is obvious and this concludes the proof.
\end{proof}

\begin{rem}
{\em Proposition \ref{Fukuma} is effective, since $(S,L)$
is a very well known elliptic conic bundle in $\mathbb P^5$.
We want to stress that Proposition \ref{AVB} is
effective as well. Arguing as in \cite[Section 3]{LS} we can produce an example.
Let $\mathcal{V}_n$ be an indecomposable vector
bundle of rank $n$ and degree $1$ over the elliptic curve $B$, and set $X :=
\mathbb{P}(\mathcal{V}_n)$. We note that any two such bundles
$\mathcal{V}_n$, $\mathcal{V}^{\prime}_n$ are related by
$\mathcal{V}_n = \mathcal{V}^{\prime}_n \otimes \tau$, where
$\tau$ is a line bundle of degree $0$ on $B$. Thus $X$ is the same
for all choices of $\mathcal{V}_n$. We also note that any such
vector bundle $\mathcal{V}_n$ can be constructed inductively from
a non--split exact sequence $0 \rightarrow \mathcal{O}_B
\rightarrow \mathcal{V}_n  \rightarrow \mathcal{V}_{n-1}
\rightarrow 0$, starting from a line bundle $\mathcal{V}_1$ of
degree $1$. We have $h^0(\mathcal{V}_n) = 1$ for all $n\geq
1$, hence the tautological line bundle $\xi$ on $X$ has a single
section (up to a nonzero constant factor). Since the section of
$\mathcal{V}_n$ vanishes nowhere on $B$, it follows that the
corresponding section of $\xi$ vanishes exactly on
$\mathbb{P}(\mathcal{V}_{n-1})$. Note also that $\mathcal{V}_n$ is
ample for any $n \geq 1$. Hence $\xi$ is ample. Now let $\xi_1$,
..., $\xi_{n-2}$ be $\xi$ twisted by the pullbacks on $X$ of $n-2$
distinct degree $0$ line bundles on $B$ and let $\mathcal{E} =
\bigoplus_{i=1}^{n-2} \xi_i$. Then $\mathcal{E}$ is an ample
vector bundle on $X$. Consider its section $s=(s_1, \dots ,
s_{n-2})$ where $\langle s_i \rangle = H^0(\xi_i)$ and let $Z$ be
its zero locus. Then $Z \cong \mathbb P({\mathcal V}_2)$ \cite[Claim B]{LS},
i.e., $Z$ is the $\mathbb P^1$-bundle of invariant $-1$ over $B$,
and $\xi_Z = [C_0]$, $C_0$ being the tautological section. Now, letting
$H := 2\xi + F$ we have that $H$ is an ample
line bundle, since $\xi$ is ample and $F$ is nef;
Moreover $H_Z=[2C_0+f]$ is very ample, due to Reider's theorem \cite[Theorem 1]{R}}.
\end{rem}

Assuming that $S$ is not a ruled surface, assumption \eqref{VA} allows us to improve Proposition \ref{basic5}, probably roughly, as follows.

\begin{cor}
Let $(X,\mathcal{E},H)$ be as in
\eqref{0} and suppose that \eqref{VA} holds. If $S$ is not
a ruled surface, then $\delta\geq 2g+11$.
\end{cor}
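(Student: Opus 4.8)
The plan is to strengthen, under the very ampleness hypothesis, the exclusions already carried out in Proposition \ref{basic5}. Since \eqref{VA} implies \eqref{S}, that proposition gives $\delta\ge 2g+5$, so it is enough to rule out $2g+5\le\delta\le 2g+10$; accordingly, I would assume $\delta\le 2g+10$ and seek a contradiction. By Proposition \ref{proposition} we have $\delta=c_2(J_1(H_S))-H_S^2$, so \eqref{formula} reads $\delta=e(S)+4(g-1)$. As $S$ is not ruled, $e(S)\ge 0$ and $K_SH_S\ge 0$, whence $e(S)\le 14-2g$. Lemma \ref{doublecovers}, applicable because $H_S$ is spanned while $S$ is neither the plane nor the quadric (both ruled) nor the double plane (for which $H_S$ is not very ample), gives $d:=H_S^2\ge 3$; thus $2g-2=d+K_SH_S\ge 3$ forces $g\ge 3$, and then $e(S)\ge 0$ forces $g\le 7$. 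Finally Theorem \ref{2g+d} gives $\delta\ge 2g+d$, so $d\le 10$. Hence $3\le d\le 10$, $3\le g\le 7$ and $0\le e(S)\le 14-2g\le 8$.

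I would then split according to $\kappa(S)$. When $\kappa(S)=0$, \cite[Theorem]{L2} shows that the only minimal surface of this kind arising here is a K$3$ surface, for which $e(S)=24$; the same value or larger occurs if $S$ is a blow-up of a K$3$, and $e(S)\ge 12$ if $S$ is (a blow-up of) an Enriques surface, each contradicting $e(S)\le 8$. The remaining possibility is that $S$ is a blow-up of a minimal abelian or bielliptic surface $S_0$, so that $\chi(\mathcal O_S)=0$. Here I would reproduce the Riemann--Roch and jumping-set computation from the proof of Theorem \ref{main}: writing $H_S=\eta^*A-\sum a_iE_i$ and computing $h^0(H_S)$, very ampleness forces $h^0(H_S)\ge 4$ because $S$ is not a plane, and this is incompatible with the small numerical invariants imposed by $3\le d\le 10$ together with $\chi(\mathcal O_S)=0$ (for $\delta=2g+5$ one finds exactly $h^0(H_S)=3$, as in Theorem \ref{main}).

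For $\kappa(S)=1$ the surface is properly elliptic; multiple fibres are excluded by \cite[Theorem]{LM4}, and combining the canonical bundle formula with $K_SH_S=2g-2-d$ and the very ampleness of $H_S$ one bounds $e(S)$ from below, contradicting $e(S)\le 14-2g$; the numerical book-keeping is parallel to that in the proof of Theorem \ref{thm basic3}. For $\kappa(S)=2$ one has $e(S)\ge 3$ by Remark \ref{general type}, so only $g\le 5$ survives, and here I would invoke the Bogomolov--Miyaoka--Yau and Noether inequalities together with the sectional-genus bounds of \cite{BLP} and \cite{AP} to push $e(S)$ above $14-2g$.

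The main obstacle is precisely the non-minimal $\kappa(S)=0$ case and the general type case with small $d$ and $g$: there the purely topological inequality $e(S)\le 14-2g$ is not by itself contradictory, and one must feed in the very ampleness of $H_S$---through the bound $h^0(H_S)\ge 4$, Reider-type spannedness and very-ampleness criteria, and the classification of very ample surfaces of low degree and sectional genus---in order to eliminate the finitely many surviving numerical types. Organizing these constraints uniformly over the ranges $3\le d\le 10$ and $3\le g\le 7$ is where the real work lies.
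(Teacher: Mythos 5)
Your numerical reduction matches the paper's: assuming $\delta\le 2g+10$, Theorem \ref{2g+d} gives $d\le 10$, the identity $\delta=e(S)+4(g-1)$ together with $e(S)\ge 0$ gives $g\le 7$, and one gets $e(S)\le 14-2g$. But from that point on your proposal does not actually prove the statement, and you say so yourself: the elimination of the surviving numerical types (non-minimal $\kappa(S)=0$ surfaces, properly elliptic surfaces, and general type surfaces with $3\le d\le 10$, $3\le g\le 7$) is only sketched, and you concede that "organizing these constraints uniformly \dots is where the real work lies." That remaining work is precisely the content of the proof, and the devices you gesture at do not suffice as stated. For instance, in the $\kappa(S)=0$ case the jumping-set computation of Theorem \ref{main} is tailored to $\delta=2g+5$ (where $(e(S),g)=(1,4)$ and $h^0(H_S)=3$ is forced); for $\delta=2g+6,\dots,2g+10$ one has other possible pairs $(e(S),g)$, blow-ups at several points, and no a priori control of $h^0(H_S)$, so the asserted incompatibility with $h^0(H_S)\ge 4$ is an unproved claim, not a computation. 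Similarly, for $\kappa(S)=1$ and $\kappa(S)=2$ you invoke "book-keeping parallel to Theorem \ref{thm basic3}" and bounds from \cite{AP}, \cite{BLP}, but those were derived under much more restrictive hypotheses (e.g.\ $m_2=10$), and no argument is given that they close the gap here.

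The missing idea is the one the paper actually uses: since \eqref{VA} holds and $g\le 7$, one can appeal to Livorni's classification table \cite[(4.0)]{Li} of non-ruled surfaces carrying a very ample line bundle of sectional genus at most $7$. Writing $\delta=2g+D$ with $D=e(S)+2g-4=12\chi(\mathcal O_S)-K_S^2+2g-4$ (Noether's formula), the table shows that for $g\le 5$ the surface $S$ is birational to a K3, whence $D\ge 24-K_S^2+2\ge 26$; for $g=6$ either $S$ is of general type ($D=63$), or $D\ge 20$, or $S$ is minimal abelian/bielliptic, which is excluded by \cite[Theorem]{L2}; and for $g=7$ the condition $d\le 10$ rules out the abelian/bielliptic cases while the others give $D\ge 21$. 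In every case $D\ge 11$, the desired contradiction. Without this classification input (or a genuine substitute for it), your case-by-case plan cannot be completed by the tools you list, so the proposal has a real gap rather than being an alternative proof.
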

\begin{proof}
Assume that $\delta\leq 2g+10$. Then by Theorem \ref{2g+d}
we see that $d\leq 10$. Furthermore, we have also
$$2g+10\geq \delta = m_2-d=e(S)+4(g-1)\geq 4(g-1),$$ i.e. $g\leq 7$.
On the other hand, since $S$ is not ruled, \eqref{VA} implies $g \geq 3$.
This allows us to use
\cite[Table in (4.0)]{Li}.
We can write
$$\delta=2g+(e(S)+2g-4)=2g+D,$$
where $D:= e(S)+2g-4 = 12\chi(\mathcal{O}_S)-K_S^2+2g-4$ by Noether's formula.
Table in \cite[(4.0)]{Li} shows that $S$ is birational to a K3 surface for $3 \leq g\leq 5$, but
in this case $D\geq 24-K_S^2+2\geq 26$, a contradiction.
On the other hand, if $g=6$ and $S$ is of general type, then $D=63$, while in the remaining cases $D\geq 12-K_S^2+8\geq 20$,
except when $S$ is either an abelian or a bielliptic surface (Cases 8) and 9) in the Table), but in these two cases
$S$ is minimal and this possibility is ruled out by \cite[Theorem]{L2}.
Finally, for $g=7$ condition $d\leq 10$ prevents $S$ from being birational to an abelian or a bielliptic surface (Cases
23) and 25) in the Table)
and in the remaining cases we have $D\geq 12-K_S^2+10\geq 21$, a contradiction.
\end{proof}

\bigskip

{\bf Acknowledgements}: The first author is a member of G.N.S.A.G.A. of the Italian INdAM. He would like to thank
 the PRIN 2010-11 Geometry of Algebraic Varieties and the University of Milano for partial support.
 During the preparation of this paper, the second author was partially supported by the National Project Anillo ACT
 1415 PIA CONICYT and the Proyecto VRID N.214.013.039-1.OIN of the University of Concepci\'on.
 The authors are grateful to the referee for accurate reading of the manuscript and for several useful remarks.

\bigskip


\begin{thebibliography}{BCS}


\bibitem {ABW} {M. Andreatta, E. Ballico, J. Wi\'{s}niewski},
{\it Vector bundles and adjunction}, Intern.\ J.\ Math.\ {\bf 3} (1992), 331--340

\bibitem {AP} {M. Antei, M. Palleschi}, {\it On the second Chern class of the first jet bundle of an ample line
bundle on a surface}, An.\ Stiint.\ Univ.\ Al.\ I.\ Cuza Iasi Mat.\ (N.S.)
{\bf 52} (2006), 401--409.

\bibitem {BPV} {W. Barth, Ch. Peters, A. Van de Ven},
{\it Compact Complex Surfaces}, Springer, Berlin-New York, 1984.

\bibitem {B1} {A. Beauville}, {\it Surfaces alg\'ebriques complexes},
Ast\'erisque, {\bf 54}.
Soci\'et\'e Math\'ematique de France, Paris, 1978.

\bibitem {BLP} {M. Beltrametti, A. Lanteri, M. Palleschi} {\it Algebraic surfaces containing an ample divisor of arithmetic genus two}, Ark.\ Mat.\ {\bf 25} (1987),
189--210.

\bibitem {dF} {T. de Fernex}, {\it Ample vector bundles with sections vanishing along conic fibrations over curves},
Collect.\ Math.\ {\bf 49} (1998), 67--79.

\bibitem {F3} {T. Fujita}, {\it Classification of polarized manifolds of sectional genus two},
in {\it Algebraic Geometry and Commutative Algebra. In honor of M. Nagata}, 37--98, Kinokuniya, 1987.

\bibitem {F2} {T. Fujita}, {\it Ample vector bundles of small $c_1$-sectional genera},
J.\ Math.\ Kyoto Univ.\ {\bf 29} (1989), 1–-16.

\bibitem {F4} {T. Fujita}, {\it Classification Theories of Polarized Varieties}, London Math.\ Soc.\ Lectures
Notes Ser., vol. 155, Cambridge Univ. Press, 1990.

\bibitem {F1} {T. Fujita}, {\it On certain polarized elliptic surfaces}, in {\it Geometry of Complex Projective Varieties (Cetraro, 1990)}, 153--163,
Seminars and Conferences, vol. 9, Mediterranean Press, 1993,

\bibitem{Fu} {Y. Fukuma}, {\it A note on a result of Lanteri about the class of polarized surfaces},
Hiroshima Math.\ J.\ {\bf 46} (2016), 79--85.

\bibitem {FL} {D. Fusi, A. Lanteri} {\it Ample vector bundles with small} $g-q$,
Comm.\ Algebra  {\bf 34} (2006), 2989–-3008.

\bibitem {G1} {D. Gallarati}, {\it Una propriet\`a caratteristica delle rigate algebriche},
Atti Accad.\ Naz.\ Lincei Rend.\ Cl.\ Sci.\ Fis.\ Mat.\ Nat.\ (8) {\bf 21} (1956), 55--56.

\bibitem {G2} {D. Gallarati}, {\it Ancora sulla differenza tra la classe e l\rq ordine di una superficie algebrica},
Ricerche Mat.\ {\bf 6} (1957), 111--124.

\bibitem {H} {R. Hartshorne},
{\it Algebraic Geometry},
Springer-Verlag, New York--Heidelberg--Berlin, 1977.

\bibitem{MG-K} S.L. Kleiman, {\it Tangency and Duality}, in
Vancouver Conference in Algebraic Geometry, Canad. Math. Soc.
Conf. Proc. {\bf 6} (1986), 163--225.

\bibitem {L2} {A. Lanteri}, {\it Ample vector bundles with sections vanishing on surfaces of Kodaira dimension zero},
Matematiche (Catania) {\bf 51} (1996), suppl., 115--125 (1997).

\bibitem {LL} {A. Lanteri, E.\ L. Livorni},
{\it Complex surfaces polarized by an ample and spanned line bundle of genus three},
Geom.\ Dedicata {\bf 31} (1989), 267--289.

\bibitem {LM1} {A. Lanteri, H. Maeda}, {\it Ample vector bundles with sections vanishing on projective spaces or quadrics},
Internat.\ J.\ Math. {\bf 6} (1995), 587--600.

\bibitem {LM2} {A. Lanteri, H. Maeda}, {\it Geometrically ruled surfaces as zero loci of ample vector bundles},
Forum Math.\ {\bf 9} (1997), 1--15.

\bibitem {LM3} {A. Lanteri, H. Maeda}, {\it Special varieties in adjunction theory and ample vector bundles},
Math.\ Proc.\ Cambridge Philos.\ Soc.\ {\bf 130} (2001), 61--75.

\bibitem {LM4} {A. Lanteri, H. Maeda}, {\it Elliptic surfaces and ample vector bundles},
Pacific J.\ Math.\ {\bf 200} (2001), 147--157.

\bibitem {LP} {A. Lanteri, M. Palleschi}, {\it Adjunction properties of polarized surfaces via Reider\rq s method},
Math.\ Scand.\ {\bf 65} (1989), 175--188.

\bibitem {LP2} {A. Lanteri, M. Palleschi}, {\it Complex projective surfaces of non-negative Kodaira dimension
polarized by an ample and spanned line bundle of genus four},
Indiana Univ.\ Math.\ J.\ {\bf 39} (1990), 85--104.

\bibitem {LPS} {A. Lanteri, M. Palleschi, A.J. Sommese},
{\it On the discriminant locus of an ample and spanned line bundle}, J.
reine angew.\ Math.\ {\bf 477} (1996), 199--219.

\bibitem {LS} {A. Lanteri, A.\ J. Sommese}, {\it Ample vector bundles with zero loci
having a hyperelliptic curve section}, Forum Math.\ {\bf 15} (2003), 523--542.

\bibitem {Li} {E.\ L. Livorni}
{\it On the existence of some surfaces},
Algebraic geometry (L'Aquila, 1988), Lecture Notes in Math. {\bf 1417}, Springer, Berlin, 1990,
155--179.

\bibitem {Ma} {H. Maeda}, {\it Nefness of adjoint bundles for ample vector bundles},
Matematiche (Catania) {\bf 50} (1995), 73--82.

\bibitem {M} {E. Marchionna}, {\it Sopra una disuguaglanza fra i caratteri proiettivi di una superficie algebrica},
Boll.\ Un.\ Mat.\ Ital.\ (3) {\bf 10} (1955), 478--480.

\bibitem {PTu} {M. Palleschi, C. Turrini}, {\it On polarized surfaces with a small generalized class},
Extracta Math.\ {\bf 13} (1998), 371--381.

\bibitem {R} {I. Reider},
{\it Vector bundles of rank $2$ and linear systems on algebraic surfaces},
Ann.\ of Math.\ {\bf 127} (1988), 309--316.

\bibitem {Se} {F. Serrano}, {\it Elliptic surfaces with an ample divisor of genus two},
Pacific J.\ Math.\ {\bf 152} (1992), 187--199.

\bibitem {Se1} {F. Serrano}, {\it Fibrations on algebraic surfaces}, in
{\it Geometry of Complex Projective Varieties (Cetraro, 1990)}, 289--301, Seminars and Conferences, vol. 9, Mediterranean Press, 1993.

\bibitem {TuV} {C. Turrini, E. Verderio}, {\it Projective surfaces of small class},
Geom.\ Dedicata 47 (1993), 1--14.

\bibitem {Y} {K. Yokoyama}, {\it On blowing-up of polarized surfaces}, J. Math.\ Soc.\ Japan {\bf 51} (1999), 523--533.

\end{thebibliography}
\end{document}